\theoremstyle{definition}
\newtheorem{theorem}{Theorem}[section]
\newtheorem{proposition}[theorem]{Proposition}
\newtheorem{lemma}[theorem]{Lemma}
\newtheorem{corollary}[theorem]{Corollary}
\newtheorem{conjecture}[theorem]{Conjecture}
\newtheorem{definition}[theorem]{Definition}
\newtheorem{example}[theorem]{Example}
\newtheorem{definition/theorem}[theorem]{Definition/Theorem}
\theoremstyle{remark}
\newtheorem{remark}[theorem]{Remark}
\numberwithin{equation}{section}
\newlength\cellsize \setlength\cellsize{15\unitlength}
\newcommand\cellify[1]{\def\thearg{#1}\def\nothing{}%
\ifx\thearg\nothing
\vrule width0pt height\cellsize depth0pt\else
\hbox to 0pt{\usebox2\hss}\fi%
\vbox to 15\unitlength{
\vss
\hbox to 15\unitlength{\hss$#1$\hss}
\vss}}
\newcommand\tableau[1]{\vtop{\let\\=\cr
\setlength\baselineskip{-16000pt}
\setlength\lineskiplimit{16000pt}
\setlength\lineskip{0pt}
\halign{&\cellify{##}\cr#1\crcr}}}
\newcommand\expath[1]{%
\hbox to 0pt{\usebox3\hss}%
\vbox to 15\unitlength{
\vss
\hbox to 15\unitlength{\hss$#1$\hss}
\vss}}
\newcommand\bas[1]{\omit \vbox to \cellsize{ \vss \hbox to \cellsize{\hss$#1$\hss} \vss}}
\begin{document}

\title[A combinatorial Schur expansion of triangle-free horizontal-strip LLT polynomials]{A combinatorial Schur expansion of triangle-free horizontal-strip LLT polynomials}

\author{Foster Tom}

\thanks{
The author was supported in part by the National Sciences and Engineering Research Council of Canada.}
\subjclass[2010]{Primary 05E05; Secondary 05E10, 05C15}
\keywords{charge, chromatic symmetric function, cocharge, Hall--Littlewood polynomial, jeu de taquin, LLT polynomial, interval graph, Schur function, Schur-positive, symmetric function}

\begin{abstract} In recent years, Alexandersson and others proved combinatorial formulas for the Schur function expansion of the horizontal-strip LLT polynomial $G_{\bm\lambda}(\bm x;q)$ in some special cases. We associate a weighted graph $\Pi$ to $\bm\lambda$ and we use it to express a linear relation among LLT polynomials. We apply this relation to prove an explicit combinatorial Schur-positive expansion of $G_{\bm\lambda}(\bm x;q)$ whenever $\Pi$ is triangle-free. We also prove that the largest power of $q$ in the LLT polynomial is the total edge weight of our graph.
\end{abstract}

\maketitle
\section{Introduction}\label{sec:intro}

LLT polynomials are remarkable symmetric functions with many connections in algebraic combinatorics. Lascoux, Leclerc, and Thibon \cite{lltoriginal} originally defined LLT polynomials in terms of ribbon tableaux in order to study Fock space representations of the quantum affine algebra. Haglund, Haiman, Loehr, Remmel, and Ulyanov \cite{combdiag} redefined them in terms of tuples of skew shapes in their study of diagonal coinvariants. Haglund, Haiman, and Loehr \cite{combmacdonald} found a combinatorial formula for Macdonald polynomials, which implies a positive expansion in terms of these LLT polynomials $G_{\bm\lambda}(\bm x;q)$. LLT polynomials are also closely connected to chromatic quasisymmetric functions and to the Frobenius series of the space of diagonal harmonics \cite{shuffle}. Grojnowski and Haiman \cite{lltpos} proved that LLT polynomials, and therefore Macdonald polynomials, are Schur-positive using Kazhdan--Lusztig theory, but it remains a major open problem to find an explicit combinatorial Schur-positive expansion. We give a brief account of some recent results in this direction.\\

In the unicellular case, meaning that every skew shape of $\bm\lambda$ consists of a single cell, we can associate a unit interval graph to $\bm\lambda$. Huh, Nam, and Yoo \cite{lltunicellschur} found an explicit Schur-positive expansion whenever this graph is a melting lollipop, namely \begin{equation}G_{\bm\lambda}(\bm x;q)=\sum_{T\in\text{SYT}_n}q^{\text{wt}_{\bm a}(T)}s_{\text{shape}(T)}.\end{equation} Moreover, they proved that for arbitrary unit interval graphs, this formula gives the correct coefficient of $s_\mu$ whenever the partition $\mu$ is a hook. \\

More generally, we focus on the horizontal-strip case, meaning that every skew shape of $\bm\lambda$ is a row. Grojnowski and Haiman \cite{lltpos} showed that if the rows of $\bm\lambda$ are nested, then $G_{\bm\lambda}(\bm x;q)$ is a transformed modified Hall--Littlewood polynomial and so its Schur expansion is given by the celebrated Lascoux--Sch\"{u}tzenberger cocharge formula \cite{charge}, namely \begin{equation}G_{\bm\lambda}(\bm x;q)=\tilde H_\lambda(\bm x;q)=\sum_{T\in\text{SSYT}(\lambda)}q^{\text{cocharge}(T)}s_{\text{shape}(T)}.\end{equation}

Alexandersson and Uhlin \cite{lltmpath} found a generalization of cocharge to prove an analogous formula when the rows of $\bm\lambda$ come from a skew shape $\sigma/\tau$ with no column having more than two cells. They formulated it for vertical-strips but we can equivalently state it as \begin{equation}G_{\bm\lambda}(\bm x;q)=\sum_{T\in\text{SSYT}(\alpha)}q^{\text{cocharge}_\tau(T)}s_{\text{shape}(T)}.\end{equation}

D'Adderio \cite{lltepos} used recurrences in terms of Schr\"{o}der paths to prove that the shifted vertical-strip LLT polynomial $G_{\bm\lambda}(\bm x;q+1)$ is a positive linear combination of elementary symmetric functions. Alexandersson conjectured \cite{lltunicellepos} and then proved with Sulzgruber \cite{lltcombe} the explicit combinatorial formula \begin{equation}G_{\bm\lambda}(\bm x;q+1)=\sum_{\theta\in\mathcal O(P)}q^{\text{asc}(\theta)}e_{\lambda(\theta)}\end{equation} in terms of acyclic orientations of a decorated unit interval graph associated to $\bm\lambda$. \\

In this paper, we define a \emph{weighted graph} $\Pi$ associated to $\bm\lambda$. In Section \ref{sec:wgraph}, we use our weighted graph to express linear recurrences of horizontal-strip LLT polynomials. We further generalize cocharge and apply our recurrences in Section \ref{sec:caterpillar} to prove the explicit combinatorial Schur-positive formula \begin{equation}G_{\bm\lambda}(\bm x;q)=\sum_{T\in\text{SSYT}(\alpha)}q^{\text{cocharge}_\Pi(T)}s_{\text{shape}(T)}\end{equation} whenever the weighted graph $\Pi$ is triangle-free. We also prove that the largest power of $q$ in the LLT polynomial $G_{\bm\lambda}(\bm x;q)$ is the total edge weight of $\Pi$. 

\section{Background}\label{sec:background}

A \emph{partition} $\sigma$ is a finite sequence of nonincreasing positive integers $\sigma=\sigma_1\cdots\sigma_\ell$. By convention, we set $\sigma_i=0$ if $i>\ell$. A \emph{skew diagram} $\lambda$ is a subset of $\mathbb Z\times\mathbb Z$ of the form \begin{equation}\lambda=\sigma/\tau=\{(i,j): \ i\geq 1, \ \tau_i+1\leq j\leq \sigma_i\}\end{equation} for some partitions $\sigma$ and $\tau$ with $\sigma_i\geq\tau_i$ for every $i$. When $\tau$ is empty, we write $\sigma$ instead of $\sigma/\emptyset$. The elements of $\lambda$ are called \emph{cells} and the \emph{content} of a cell $u=(i,j)\in\lambda$ is the integer $c(u)=j-i$. We will focus heavily on \emph{rows}, which are skew diagrams of the form \begin{equation}R=a/b=\{(1,j): \ b+1\leq j\leq a\}\end{equation} for some $a\geq b\geq 0$. We denote by $c(R)=\{b,b+1,\ldots,a-1\}$ the set of contents of cells in $R$ and by $\ell(R)=b$ and $r(R)=a-1$ the smallest and largest contents of $c(R)$ respectively. A \emph{semistandard Young tableau (SSYT)} of shape $\lambda$ is a function $T:\lambda\to\{1,2,3,\ldots\}$ that satisfies \begin{equation}T_{i,j}\leq T_{i,j+1}\text{ and }T_{i,j}<T_{i+1,j},\end{equation} where we write $T_{i,j}$ to mean $T((i,j))$. The \emph{weight} of $T$ is the sequence $w(T)=(w_1,w_2,\ldots)$, where $w_i=|T^{-1}(i)|$ is the number of times the integer $i$ appears. We denote by $\text{SSYT}_\lambda$ the set of SSYT of shape $\lambda$ and by $\text{SSYT}(\alpha)$ the set of SSYT of weight $\alpha$. We define the \emph{skew Schur function} of shape $\lambda=\sigma/\tau$ to be \begin{equation}s_\lambda=\sum_{T\in\text{SSYT}_\lambda}\bm x^T,\end{equation} where $\bm x^T$ is the monomial $x_1^{w_1}x_2^{w_2}\cdots$. When $\tau$ is empty, we call $s_\lambda$ a \emph{Schur function}. \\

A \emph{multiskew partition} is a finite sequence of skew diagrams $\bm\lambda=(\lambda^{(1)},\ldots,\lambda^{(n)})$. If each $\lambda^{(i)}$ is a row, then we call $\bm\lambda$ a \emph{horizontal-strip}. We denote by \begin{equation}\text{SSYT}_{\bm\lambda}=\{\bm T=(T^{(1)},\ldots,T^{(n)}): \ T^{(i)}\in\text{SSYT}_{\lambda^{(i)}}\}\end{equation} the set of \emph{semistandard multiskew tableaux} of shape $\bm\lambda$. Cells $u\in\lambda^{(i)}$ and $v\in\lambda^{(j)}$ with $i<j$ \emph{attack} each other if $c(u)=c(v)$ or $c(u)=c(v)+1$. The skew shapes $\lambda^{(i)}$ and $\lambda^{(j)}$ \emph{attack} each other if some cells $u\in\lambda^{(i)}$ and $v\in\lambda^{(j)}$ attack each other. Entries $T^{(i)}(u)$ and $T^{(j)}(v)$ with $i<j$ form an \emph{inversion} if either \begin{itemize} 
\item $c(u)=c(v)$ and $T^{(i)}(u)>T^{(j)}(v)$, or 
\item $c(u)=c(v)+1$ and $T^{(j)}(v)>T^{(i)}(u)$. 
\end{itemize}
We denote by $\text{inv}(\bm T)$ the number of inversions of $\bm T$. Now we define the \emph{LLT polynomial} \begin{equation}\label{eq:llt} G_{\bm\lambda}(\bm x;q)=\sum_{\bm T\in\text{SSYT}_{\bm\lambda}}q^{\text{inv}(\bm T)}\bm x^{\bm T}.\end{equation}
\begin{example} \label{ex:llt} Let $\bm\lambda=(4/0,5/2,2/0)$. When $\bm\lambda$ is a horizontal-strip we draw it so that cells of the same content are aligned vertically as on the left. We have written the content in each cell using our convention that content increases from left to right. We have also drawn two tableaux $\bm T,\bm U\in\text{SSYT}_{\bm\lambda}$ with dotted red lines indicating the inversions.
$$
\begin{tikzpicture}
\draw (-1.25,0.75) node (0) {$\bm \lambda=$};
\draw (0.25,-0.25) node (1) {0} (0.25,1.75) node (2) {0} (0.75,-0.25) node (3) {1} (0.75,1.75) node (4) {1} (1.25,-0.25) node (5) {2} (1.25,0.75) node (6) {2} (1.75,-0.25) node (7) {3} (1.75,0.75) node (8) {3} (2.25,0.75) node (9) {4};
\draw (0,0) -- (0.5,0) -- (0.5,-0.5) -- (0,-0.5) -- (0,0) (0,1.5) -- (0.5,1.5) -- (0.5,2) -- (0,2) -- (0,1.5) (0.5,-0.5) -- (0.5,0) -- (1,0) -- (1,-0.5) -- (0.5,-0.5) (0.5,1.5) -- (0.5,2) -- (1,2) -- (1,1.5) -- (0.5,1.5) (1,0) -- (1.5,0) -- (1.5,-0.5) -- (1,-0.5) -- (1,0) (1,0.5) -- (1,1) -- (1.5,1) -- (1.5,0.5) -- (1,0.5) (1.5,0) -- (1.5,-0.5) -- (2,-0.5) -- (2,0) -- (1.5,0) (1.5,0.5) -- (1.5,1) -- (2,1) -- (2,0.5) -- (1.5,0.5) (2,0.5) -- (2,1) -- (2.5,1) -- (2.5,0.5) -- (2,0.5);
\end{tikzpicture}\hspace{50pt}
\begin{tikzpicture}
\draw (-1.25,0.75) node (0) {$\bm T=$};
\draw (0.25,-0.25) node (1) {3} (0.25,1.75) node (2) {2} (0.75,-0.25) node (3) {3} (0.75,1.75) node (4) {2} (1.25,-0.25) node (5) {3} (1.25,0.75) node (6) {1} (1.75,-0.25) node (7) {3} (1.75,0.75) node (8) {1} (2.25,0.75) node (9) {1};
\draw (0,0) -- (0.5,0) -- (0.5,-0.5) -- (0,-0.5) -- (0,0) (0,1.5) -- (0.5,1.5) -- (0.5,2) -- (0,2) -- (0,1.5) (0.5,-0.5) -- (0.5,0) -- (1,0) -- (1,-0.5) -- (0.5,-0.5) (0.5,1.5) -- (0.5,2) -- (1,2) -- (1,1.5) -- (0.5,1.5) (1,0) -- (1.5,0) -- (1.5,-0.5) -- (1,-0.5) -- (1,0) (1,0.5) -- (1,1) -- (1.5,1) -- (1.5,0.5) -- (1,0.5) (1.5,0) -- (1.5,-0.5) -- (2,-0.5) -- (2,0) -- (1.5,0) (1.5,0.5) -- (1.5,1) -- (2,1) -- (2,0.5) -- (1.5,0.5) (2,0.5) -- (2,1) -- (2.5,1) -- (2.5,0.5) -- (2,0.5);
\draw [color=red, dashed] (0.25,-0.25) -- (0.25,1.75);
\draw [color=red, dashed] (0.75,-0.25) -- (0.75,1.75);
\draw [color=red, dashed] (1.25,-0.25) -- (1.25,0.75);
\draw [color=red, dashed] (1.75,-0.25) -- (1.75,0.75);
\draw [color=red, dashed] (0.75,1.75) -- (1.25,0.75);
\end{tikzpicture}\hspace{50pt}\begin{tikzpicture}
\draw (-1.25,0.75) node (0) {$\bm U=$} (0.25,-0.25) node (1) {1}
 (0.25,1.75) node (2) {2} (0.75,-0.25) node (3) {1}
 (0.75,1.75) node (4) {3} (1.25,-0.25) node (5) {2}
 (1.25,0.75) node (6) {3} (1.75,-0.25) node (7) {4}
 (1.75,0.75) node (8) {3} (2.25,0.75) node (9) {4};
\draw (0,0) -- (0.5,0) -- (0.5,-0.5) -- (0,-0.5) -- (0,0) (0,1.5) -- (0.5,1.5) -- (0.5,2) -- (0,2) -- (0,1.5) (0.5,-0.5) -- (0.5,0) -- (1,0) -- (1,-0.5) -- (0.5,-0.5) (0.5,1.5) -- (0.5,2) -- (1,2) -- (1,1.5) -- (0.5,1.5) (1,0) -- (1.5,0) -- (1.5,-0.5) -- (1,-0.5) -- (1,0) (1,0.5) -- (1,1) -- (1.5,1) -- (1.5,0.5) -- (1,0.5) (1.5,0) -- (1.5,-0.5) -- (2,-0.5) -- (2,0) -- (1.5,0) (1.5,0.5) -- (1.5,1) -- (2,1) -- (2,0.5) -- (1.5,0.5) (2,0.5) -- (2,1) -- (2.5,1) -- (2.5,0.5) -- (2,0.5);
\draw [color=red, dashed] (0.25,1.75) -- (0.75,-0.25);
\draw [color=red, dashed] (0.75,1.75) -- (1.25,-0.25);
\draw [color=red, dashed] (1.75,-0.25) -- (1.75,0.75);
\end{tikzpicture} $$
The tableau $\bm T$ contributes $q^5x_1^3x_2^2x_3^4$ to \eqref{eq:llt} and the tableau $\bm U$ contributes $q^3x_1^2x_2^2x_3^3x_4^2$. We can expand the LLT polynomial $G_{\bm\lambda}(\bm x;q)$ in the basis of Schur functions as \begin{align}G_{\bm\lambda}(\bm x;q)&=q^5s_{432}+q^5s_{441}+q^5s_{522}+(q^5+q^4)s_{531}+2q^4s_{54}+2q^4s_{621}\\\nonumber&+(q^4+2q^3)s_{63}+q^3s_{711}+(2q^3+q^2)s_{72}+(q^2+q)s_{81}+s_9.\end{align}
\end{example} 
We cite some helpful properties of LLT polynomials. The first three are immediate from the definition. 
\begin{proposition}\label{prop:lltfacts}Let $\bm\lambda=(\lambda^{(1)},\ldots,\lambda^{(n)})$ be a multiskew partition.
\begin{enumerate}
\item When $q=1$ the LLT polynomial $G_{\bm\lambda}(\bm x;1)$ is a product of Schur functions $\Pi_{i=1}^n s_{\lambda^{(i)}}$. 
\item Suppose that $\bm\lambda$ can be partitioned into two subsequences of skew partitions $\bm\mu=(\mu^{(i_1)},\ldots,\mu^{(i_r)})$ and $\bm\nu=(\nu^{(j_1)},\ldots,\nu^{(j_{r'})})$ such that no $\mu^{(i_t)}$ attacks any $\nu^{(j_{t'})}$. Then $G_{\bm\lambda}(\bm x;q)=G_{\bm\mu}(\bm x;q)G_{\bm\nu}(\bm x;q)$. 
\item Let $\kappa(\bm\lambda)=(\lambda^{(n)+},\lambda^{(1)},\ldots,\lambda^{(n-1)})$, where $\lambda^{(n)+}=\{(i,j+1): \ (i,j)\in\lambda^{(n)}\}$. Then $G_{\kappa(\bm\lambda)}(\bm x;q)=G_{\bm\lambda}(\bm x;q)$. 
\item \cite[Theorem 6.1]{lltoriginal} The LLT polynomial $G_{\bm\lambda}(\bm x;q)$ is a symmetric function.
\item \cite[Lemma 10.1]{combmacdonald} Let $\omega$ denote the standard involution on the algebra of symmetric functions. Denote by $\omega(\bm\lambda)$ the multiskew partition where each skew shape of $\bm\lambda$ is conjugated and the tuple is reversed. Then \begin{equation}G_{\omega(\bm\lambda)}(\bm x;q)=q^{I(\bm\lambda)}\omega G_{\bm\lambda}(\bm x;q^{-1}),\end{equation} where $I(\bm\lambda)$ is the number of attacking pairs of cells in $\bm\lambda$. In particular, results about horizontal-strip and vertical-strip LLT polynomials are equivalent. In this paper, we choose to consider horizontal-strips. 
\end{enumerate}
\end{proposition}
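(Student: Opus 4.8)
Since parts (4) and (5) are quoted from \cite{lltoriginal} and \cite{combmacdonald} respectively, the task is to prove (1), (2), and (3), each of which follows by unwinding the definition \eqref{eq:llt}. For (1), I would set $q=1$. A semistandard multiskew tableau of shape $\bm\lambda$ is by definition a tuple $\bm T=(T^{(1)},\ldots,T^{(n)})$ whose components $T^{(i)}\in\text{SSYT}_{\lambda^{(i)}}$ are chosen independently, and $\bm x^{\bm T}=\prod_{i=1}^n\bm x^{T^{(i)}}$, so $G_{\bm\lambda}(\bm x;1)=\sum_{\bm T\in\text{SSYT}_{\bm\lambda}}\bm x^{\bm T}$ factors as $\prod_{i=1}^n\bigl(\sum_{T^{(i)}\in\text{SSYT}_{\lambda^{(i)}}}\bm x^{T^{(i)}}\bigr)=\prod_{i=1}^n s_{\lambda^{(i)}}$.

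For (2), the key observation is that every inversion of $\bm T$ is formed by a pair of attacking cells, one in some $\lambda^{(i)}$ and one in some $\lambda^{(j)}$ with $i<j$. Passing from $\bm\lambda$ to the subsequences $\bm\mu$ and $\bm\nu$ preserves the relative order of the skew shapes each contains, together with all contents and entries, so the inversions of $\bm T$ between two shapes both lying in $\bm\mu$ are exactly the inversions of the restricted tableau $\bm T|_{\bm\mu}\in\text{SSYT}_{\bm\mu}$, and likewise for $\bm\nu$. Since no $\mu^{(i_t)}$ attacks any $\nu^{(j_{t'})}$, there are no remaining (cross) inversions, hence $\text{inv}(\bm T)=\text{inv}(\bm T|_{\bm\mu})+\text{inv}(\bm T|_{\bm\nu})$. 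As $\text{SSYT}_{\bm\lambda}$ is in bijection with $\text{SSYT}_{\bm\mu}\times\text{SSYT}_{\bm\nu}$ compatibly with this splitting and with the weight, the sum \eqref{eq:llt} factors to give $G_{\bm\lambda}(\bm x;q)=G_{\bm\mu}(\bm x;q)G_{\bm\nu}(\bm x;q)$.

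For (3), I would use the evident bijection $\Phi\colon\text{SSYT}_{\bm\lambda}\to\text{SSYT}_{\kappa(\bm\lambda)}$ sending $(T^{(1)},\ldots,T^{(n)})$ to $\bigl((T^{(n)})^{+},T^{(1)},\ldots,T^{(n-1)}\bigr)$, where $(T^{(n)})^{+}$ is $T^{(n)}$ translated one column to the right; this respects the semistandard condition and the monomial $\bm x^{\bm T}$, so it suffices to check $\text{inv}(\Phi(\bm T))=\text{inv}(\bm T)$. A pair of cells both taken from $\lambda^{(1)},\ldots,\lambda^{(n-1)}$ keeps its contents, entries, and relative order, so it contributes an inversion to $\Phi(\bm T)$ exactly when it does to $\bm T$. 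For a pair consisting of $v\in\lambda^{(n)}$ and $u\in\lambda^{(i)}$ with $i<n$, in $\bm\lambda$ the cell $u$ precedes $v$, whereas in $\kappa(\bm\lambda)$ the translated cell $v^{+}$, which has $c(v^{+})=c(v)+1$ and the same entry, precedes $u$; substituting $c(v^{+})=c(v)+1$ into the two clauses of the inversion criterion for the pair $(v^{+},u)$ reproduces precisely the two clauses for the pair $(u,v)$, so inversions are matched term by term and summing over $\bm T$ gives $G_{\kappa(\bm\lambda)}(\bm x;q)=G_{\bm\lambda}(\bm x;q)$. None of these steps presents a genuine obstacle; the only point needing care is this wrap-around case, where one must verify that the shift of content by $+1$ on the cell moved to the front exactly interchanges the two clauses of the inversion definition, so that no inversion is created or destroyed.
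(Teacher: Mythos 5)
Your proposal is correct and follows exactly the routine unwinding of the definition that the paper has in mind when it calls parts (1)--(3) ``immediate from the definition'' (the paper gives no further proof, citing references for parts (4) and (5)). In particular, your careful check in part (3) that the content shift $c(v^+)=c(v)+1$ interchanges the two clauses of the inversion criterion is precisely the one nontrivial verification underlying the paper's claim.
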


In Example \ref{ex:llt}, we saw that $G_{(4/0,5/2,2/0)}(\bm x;q)$ is \emph{Schur-positive}, meaning that it is an $\mathbb N[q]$-linear combination of Schur functions. In fact, this property holds in general. 

\begin{theorem} \label{thm:lltpos}\cite[Corollary 6.9]{lltpos} For any multiskew partition $\bm\lambda$, the LLT polynomial $G_{\bm\lambda}(\bm x;q)$ is Schur-positive. \end{theorem}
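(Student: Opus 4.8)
This is the Grojnowski--Haiman theorem; since no proof avoiding Kazhdan--Lusztig theory is currently known, I sketch the shape of the argument one would give. The first step is to pass from the tuple-of-skew-shapes definition \eqref{eq:llt} to the original ribbon-tableau generating function of Lascoux, Leclerc, and Thibon, using the dictionary of \cite{combdiag}: up to an overall power of $q$, $G_{\bm\lambda}(\bm x;q)$ is the (co)spin generating function of $n$-ribbon tableaux of an appropriate skew shape, where $n$ is the number of components of $\bm\lambda$. This recasts the claim as a positivity statement about a symmetric function attached to a concrete representation-theoretic object.

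The second step is to realize that generating function as a coefficient of a canonical (lower global crystal) basis element of the level-one $q$-deformed Fock space of affine type $A_{n-1}^{(1)}$, expanded in the natural standard-monomial basis. This is the description of Leclerc and Thibon, whose positivity ultimately rests on the geometric realizations of Varagnolo--Vasserot and Kashiwara--Tanisaki. Applying the stable Frobenius characteristic map carries the ambient $GL$-side to the ring of symmetric functions and the standard basis to Schur functions, so that each coefficient $\langle G_{\bm\lambda}(\bm x;q),s_\mu\rangle$ is identified, up to a power of $q$, with a parabolic Kazhdan--Lusztig polynomial (or a finite $\bZ[q]$-combination of products of such), the parabolic subgroup encoding the row lengths and the multiplicities of repeated components.

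The third step is the geometric input: parabolic Kazhdan--Lusztig polynomials have coefficients in $\bN$ because they compute local intersection cohomology of Schubert varieties in (affine) partial flag varieties --- in finite type this is Kazhdan--Lusztig together with Deodhar's parabolic version, and in the affine type $A$ case needed here it follows from Kashiwara--Tanisaki. Combining the three steps shows that $G_{\bm\lambda}(\bm x;q)$ is an $\bN[q]$-linear combination of Schur functions, as claimed.

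The main difficulty is concentrated in the second and third steps: establishing the precise dictionary --- normalizations, the passage from $GL_N$-characters to symmetric functions in infinitely many variables, and the exact parabolic data governing $\langle G_{\bm\lambda},s_\mu\rangle$ --- and then importing the deep positivity of affine parabolic Kazhdan--Lusztig polynomials. Finding an explicit combinatorial Schur expansion, rather than one mediated by intersection cohomology, is precisely the problem that the remainder of this paper resolves in the case where the weighted graph $\Pi$ is triangle-free.
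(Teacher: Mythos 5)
The paper does not prove this statement at all: it is quoted verbatim from Grojnowski--Haiman \cite[Corollary 6.9]{lltpos}, and the surrounding text only records that both the general result and the horizontal-strip special case (\cite[Theorem 3.1.3]{combdiag}, via \cite{lrkl}) were established by Kazhdan--Lusztig-theoretic means. Your proposal is therefore not in conflict with anything in the paper, and you correctly identify the essential point that the positivity ultimately rests on nonnegativity of (affine, parabolic) Kazhdan--Lusztig polynomials. Two caveats are worth noting. First, what you wrote is an outline resting on several substantial black boxes (the ribbon-tableau/$n$-quotient dictionary, the Leclerc--Thibon canonical-basis description of spin generating functions, and Kashiwara--Tanisaki positivity), so it is a roadmap rather than a proof; that is unavoidable here, and it matches the paper's own treatment, which simply cites the result. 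Second, the route you sketch --- Fock space canonical bases and parabolic KL polynomials --- is really the Leclerc--Thibon line of argument, which is closer to the proof of the horizontal-strip special case in \cite{combdiag} via \cite{lrkl}; the cited source \cite{lltpos} actually proceeds differently, through affine Hecke algebra techniques yielding a more general positivity theorem from which Schur-positivity of $G_{\bm\lambda}(\bm x;q)$ follows. So if you intend your sketch to stand in for the citation, you should either cite the Leclerc--Thibon/Kashiwara--Tanisaki chain explicitly for the argument you describe, or state that Grojnowski--Haiman's own proof takes a different (Hecke-algebraic) path to the same conclusion.
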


The special case where $\bm\lambda$ is a horizontal-strip was proven in \cite[Theorem 3.1.3]{combdiag} using some results introduced in \cite{lrkl}. Both this special case and Theorem \ref{thm:lltpos} were proven using Kazhdan--Lusztig theory. It is a major open problem to find an explicit combinatorial Schur-positive expansion of LLT polynomials. We conclude this section with a discussion of a successful solution in a special case. We first introduce the jeu de taquin algorithm.

\begin{definition}
Let $\lambda=\sigma/\tau$ be a skew shape and $T\in\text{SSYT}_{\lambda}$. An \emph{inside corner} of $\lambda$ is a cell $u\in\tau$ such that $\lambda\cup\{u\}$ is a skew shape. The \emph{jeu de taquin slide} of $T$ into an inside corner $u$ is obtained as follows. There is a cell $v$ directly above or directly right of $u$. If both, let $v$ be the one with a smaller entry, and if they have the same entry, let $v$ be the cell above $u$. We move the entry in $v$ to $u$. We continue by considering the cells directly above and directly to the right of $v$ and we stop when we vacate a cell on the outer boundary of $\lambda$, so that the result is a skew tableau. The \emph{rectification} of $T$ is the tableau obtained by successive jeu de taquin slides until the result is of partition shape. \end{definition}

\begin{theorem} \label{thm:jdt} \cite[Theorem A1.2.4]{enum2} The rectification of $T$ does not depend on the sequence of choices of inside corners into which the jeu de taquin slides are performed. \end{theorem}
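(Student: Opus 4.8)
The plan is to reduce the statement to a uniqueness property of the \emph{plactic} (Knuth) congruence on words. To a skew tableau $T\in\mathrm{SSYT}_{\sigma/\tau}$ associate its \emph{reading word} $\mathrm{word}(T)$, obtained by concatenating the rows of $T$ read left to right, starting from the bottom row. Declare two words \emph{Knuth equivalent} if one is obtained from the other by a sequence of elementary transformations on three consecutive letters, namely $yxz\leftrightarrow yzx$ when $x<y\le z$, and $xzy\leftrightarrow zxy$ when $x\le y<z$. The classical fact I would invoke is that each Knuth equivalence class contains exactly one word that is the reading word of a tableau of straight (partition) shape; equivalently, the Schensted insertion tableau $P(w)$ is a complete invariant of the Knuth class of $w$. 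Granting this, the theorem is immediate, so the substance lies in the two ingredients below.

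First I would show that a single jeu de taquin slide does not change the Knuth equivalence class of the reading word. A full slide is a composition of elementary slides, each moving one entry from a cell $v$ (directly above or directly right of the current hole $u$) into $u$, and therefore involving only two consecutive rows at a time. Tracking how the entries along the slide path are reordered inside the reading word, one checks that the net change of $\mathrm{word}(T)$ is a product of elementary Knuth relations (or is trivial); the semistandard inequalities $T_{i,j}\le T_{i,j+1}$ and $T_{i,j}<T_{i+1,j}$ are exactly what guarantee the side conditions $x<y\le z$ or $x\le y<z$. Consequently the reading word of any rectification of $T$ is Knuth equivalent to $\mathrm{word}(T)$.

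Next I would establish the uniqueness statement, which I expect to be the main obstacle. The cleanest route develops Schensted row insertion: for a word $w=w_1\cdots w_m$ form $P(w)$ by successively inserting $w_1,\dots,w_m$, and prove (i) $P(w)$ has straight shape with $\mathrm{word}(P(w))$ Knuth equivalent to $w$, by induction on $m$ (the inductive step being a verification that inserting one letter into the reading word of a straight tableau changes it by Knuth moves), and (ii) Knuth-equivalent words have equal insertion tableaux, i.e. $P(yxz)=P(yzx)$ and $P(xzy)=P(zxy)$ under the stated conditions — a finite case analysis on how two successive insertions interact (the ``one-row'' and ``two-row'' lemmas of Knuth). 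Combining these: for any straight tableau $U$, inserting the letters of $\mathrm{word}(U)$ in order reproduces $U$ (a standard property of row insertion), so $P(\mathrm{word}(U))=U$; hence two straight tableaux with Knuth-equivalent reading words both equal $P$ of that common word, and so coincide.

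Finally, assembling the pieces: a rectification of $T$ is a straight tableau $U$ produced by some sequence of jeu de taquin slides, so by the first step $\mathrm{word}(U)$ is Knuth equivalent to $\mathrm{word}(T)$, and by the second step $U=P(\mathrm{word}(T))$ depends only on $T$, independently of the inside corners chosen. An alternative would be a direct local-confluence (``diamond lemma'') argument showing that slides into two different inside corners can be completed to a common tableau; but bookkeeping the changing shapes makes that route more delicate, so I would favour the plactic argument above.
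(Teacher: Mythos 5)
The paper does not actually prove this statement: it is quoted from Stanley's \emph{Enumerative Combinatorics, Vol.\ 2} (Theorem A1.2.4, in the appendix developing Knuth equivalence and jeu de taquin), so there is no internal proof to compare against. Your plactic reduction is the standard proof in the literature (Knuth--Sch\"utzenberger; essentially the route of Fulton's \emph{Young Tableaux} and of the cited appendix): show that slides preserve the Knuth class of the reading word, and that each Knuth class contains exactly one straight-shape tableau, identified via Schensted insertion with $P(\mathrm{word}(U))=U$ for straight $U$. As a plan this is sound, the Knuth relations you state are the correct ones, and the two lemmas you isolate (slide-invariance of the Knuth class, and Knuth's theorem that equivalent words have equal insertion tableaux) are exactly where the work lies; you defer both to case analyses, which is acceptable for a classical cited result but is the entire substance of the proof.

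One concrete correction: your reading word is oriented the wrong way for this paper's conventions. Tableaux here are French-style (row $i+1$ is drawn above row $i$ and $T_{i,j}<T_{i+1,j}$), so the Knuth-invariant row word must read the rows from top to bottom, i.e.\ rows with larger entries first; ``starting from the bottom row'' is Fulton's English-notation phrasing transplanted without flipping. With your stated order the first lemma already fails on two cells: take the skew tableau of shape $(2,1)/(1)$ with entry $2$ in the cell $(1,2)$ and entry $1$ in the cell $(2,1)$ above-left of it. Its bottom-to-top word is $21$, while its rectification is the single row with word $12$, and $21\not\equiv 12$ since Knuth moves require three letters (indeed $P(21)$ is a column, not the rectification). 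Reading top row first, both words are $12$ and the lemma holds. After this orientation fix, your assembly of the pieces --- rectification $U$ satisfies $\mathrm{word}(U)\equiv\mathrm{word}(T)$, hence $U=P(\mathrm{word}(T))$ independently of the chosen inside corners --- is correct.
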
 

The following definition of cocharge is not the classical one but it is an equivalent characterization and it has the advantage that the weight of $T$ is not required to be a partition. 

\begin{definition} Let $T\in\text{SSYT}_{\lambda}$. The \emph{cocharge} of $T$ is the integer defined by the following three properties.
\begin{enumerate}
\item Cocharge is invariant under jeu de taquin slides.
\item If $T$ is a single row, then $\text{cocharge}(T)=0$. 
\item Suppose that the shape of $T$ is disconnected so that $T=X\cup Y$ with every entry of $X$ above and left of every entry of $Y$. Let $i$ be the smallest entry of $T$ and suppose that no entry of $X$ is equal to $i$. Let $S$ be a tableau obtained by swapping $X$ and $Y$ so that every entry of $Y$ is above and left of every entry of $X$. Then $\text{cocharge}(T)=\text{cocharge}(S)+|X|$. 
\end{enumerate}
\end{definition}

The classical definition of cocharge satisfies these three properties \cite[Lemma 6.6.6]{plactic}. Conversely, these properties suffice to calculate $\text{cocharge}(T)$ by using the following process called \emph{catabolism}, which was introduced in \cite[Problem 6.6.1]{plactic}. If $T$ is a single row, then $\text{cocharge}(T)=0$, otherwise by applying jeu de taquin slides, we can slide the top row of $T$ to the left to disconnect it, swap the pieces, and rectify to produce a new tableau $S$ of smaller cocharge. Repeated catabolism will terminate with a single row of cocharge zero.

\begin{example} Given the tableau $T$ below left, repeated catabolism produces the following sequence of tableaux and $\text{cocharge}(T)=1+2+1=4$. $$\tableau{3\\2&3\\1&2&2}\hspace{30pt}\tableau{\\2&3\\1&2&2&3}\hspace{30pt}\tableau{\\3\\1&2&2&2&3}\hspace{30pt}\tableau{\\ \\ 1&2&2&2&3&3}$$\end{example}
\begin{example} \label{ex:cochij}
If $T$ is a tableau of partition shape and every entry equal to $i$ or $j$ with $i<j$, then $T$ must be of the form $$T=\tableau{j&\cdot&\cdot&\cdot&j\\i&\cdot&\cdot&\cdot&i&i&\cdot&\cdot&\cdot&i&j&\cdot&\cdot&\cdot&j}$$ and a single catabolism will produce a tableau with one row, so the cocharge of $T$ is the number of entries in the second row. We can think of $\text{cocharge}(T)$ as measuring the extent to which there are entries above others.
\end{example}

We now present a case in which a combinatorial formula for the Schur-positivity of a horizontal-strip LLT polynomial is known.

\begin{theorem} \label{thm:hl} \cite[Theorem 7.15]{lltpos} Let $\bm\lambda=(R_1,\ldots,R_n)$ be a horizontal-strip such that $\ell(R_1)\leq\cdots\leq\ell(R_n)$ and $r(R_1)\geq\cdots\geq r(R_n)$ and let $\lambda_i=|R_i|$. Then the LLT polynomial $G_{\bm\lambda}(\bm x;q)$ is a transformed modified Hall--Littlewood polynomial, whose Schur expansion is known \cite{charge} to be \begin{equation}G_{\bm\lambda}(\bm x;q)=\tilde H_\lambda(\bm x;q)=\sum_{T\in\text{SSYT}(\lambda)}q^{\text{cocharge}(T)}s_{\text{shape}(T)}.\end{equation}
\end{theorem}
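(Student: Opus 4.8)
This theorem bundles two facts. The second one --- that the transformed modified Hall--Littlewood polynomial $\tilde H_\lambda(\bm x;q)$ expands in the Schur basis with Kostka--Foulkes coefficients, $\tilde H_\lambda(\bm x;q)=\sum_{T\in\text{SSYT}(\lambda)}q^{\text{cocharge}(T)}s_{\text{shape}(T)}$ --- is the classical charge theorem of Lascoux and Sch\"{u}tzenberger \cite{charge}, a statement about symmetric functions and tableaux that makes no mention of LLT polynomials; I would quote it directly. So the genuine task is the identity
\[ G_{\bm\lambda}(\bm x;q)=\tilde H_\lambda(\bm x;q), \]
and I note that the hypotheses $\ell(R_1)\le\cdots\le\ell(R_n)$ and $r(R_1)\ge\cdots\ge r(R_n)$ force $c(R_1)\supseteq c(R_i)$ for every $i$, hence $\lambda_1\ge\cdots\ge\lambda_n$.

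The plan is to prove this identity by induction on $n$, matching a one-row recursion on each side. Deleting $R_1$ leaves a nested horizontal strip $\bm\lambda^-=(R_2,\ldots,R_n)$ with part sequence $\lambda^-=(\lambda_2,\ldots,\lambda_n)$, so $G_{\bm\lambda^-}=\tilde H_{\lambda^-}$ by induction. On the Hall--Littlewood side there is a one-row recursion producing $\tilde H_\lambda$ from $\tilde H_{\lambda^-}$ --- for instance Jing's vertex operator $\mathbf B_{\lambda_1}$, equivalently a Pieri-type rule that prepends a longest part. I would show that reinstating the row $R_1$ sends $G_{\bm\lambda^-}$ to $G_{\bm\lambda}$ through exactly this operator, by the following branching computation. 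Fix a semistandard filling $\bm T^-$ of $\bm\lambda^-$; enumerate the weakly increasing words that may be placed on $R_1$ to extend $\bm T^-$ to a filling $\bm T$ of $\bm\lambda$; and track $\text{inv}(\bm T)-\text{inv}(\bm T^-)$. Since $c(R_1)$ contains every content that occurs in $\bm\lambda^-$, each cell of $R_1$ attacks two cells --- one of equal content and one of content one larger --- in essentially every earlier row, so the inversion increment is a transparent function of how the inserted entries interleave with the entries of $\bm T^-$. Summing the resulting geometric-type $q$-series over all admissible words on $R_1$ should recover $\mathbf B_{\lambda_1}$ applied to $G_{\bm\lambda^-}$, closing the induction.

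An alternative is to expand both sides in Gessel's fundamental quasisymmetric functions. Standardization gives $G_{\bm\lambda}(\bm x;q)=\sum_{\bm S}q^{\text{inv}(\bm S)}F_{\text{Des}(\bm S)}$, the sum over standard multiskew tableaux of shape $\bm\lambda$, so the coefficient of $s_\mu$ in $G_{\bm\lambda}$ equals $\sum q^{\text{inv}(\bm S)}$ over those $\bm S$ whose standardized reading word rectifies to a fixed tableau of shape $\mu$. The theorem then becomes the existence of a weight-preserving bijection from these standard objects to the semistandard tableaux of shape $\mu$ and weight $\lambda$ that carries $\text{inv}$ to $\text{cocharge}$ --- an explicit realization of the Kostka--Foulkes polynomial $K_{\mu\lambda}(q)$ --- and the invariance of cocharge under jeu de taquin slides (Theorem~\ref{thm:jdt} and the definition of cocharge) is the natural tool for verifying such a bijection.

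In either approach the step I expect to be the main obstacle is controlling the $q$-grading, not its support. At $q=1$ both sides collapse to $\prod_{i=1}^n s_{R_i}$ by Proposition~\ref{prop:lltfacts}(1), which is a reassuring consistency check but nothing more; the content of the theorem is that the multiplicity of every power of $q$ in each Schur coefficient agrees. In the vertex-operator approach this means verifying that the inversions created by the new longest row against \emph{all} the previously placed rows at once organize into precisely the $q$-weights of the Pieri coefficients, and not merely into some polynomial with the correct value at $q=1$; in the quasisymmetric approach it means showing the $\text{inv}$-to-$\text{cocharge}$ bijection respects gradings cell by cell. This is exactly the ``charge phenomenon,'' and essentially all of the work lives there.
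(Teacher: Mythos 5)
First, a point of calibration: the paper does not prove this statement at all. It is quoted as background, with the identification $G_{\bm\lambda}(\bm x;q)=\tilde H_\lambda(\bm x;q)$ for nested rows attributed to Grojnowski--Haiman \cite[Theorem 7.15]{lltpos} (proved there via Kazhdan--Lusztig/Hecke-algebra methods, not by an elementary tableau argument), and the cocharge expansion attributed to Lascoux--Sch\"utzenberger \cite{charge}. So there is no internal proof to match your attempt against; the only question is whether your attempt stands on its own.

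It does not, because at every point where real work is required you defer it. In the vertex-operator route, the entire content is the claim that reinstating the longest row $R_1$ acts on $G_{\bm\lambda^-}$ as Jing's operator $\mathbf B_{\lambda_1}$; you write that summing the $q$-series over admissible fillings of $R_1$ ``should recover'' this operator, but this is precisely the identity to be proved, and it is not routine: the inversion increment of an inserted row against \emph{all} previously placed rows must be matched against the coefficients of $\mathbf B_{\lambda_1}$ (which encode modified Hall--Littlewood Pieri/commutation data), and you also need to fix the transformation conventions between $Q'_\lambda$, $\tilde H_\lambda$, and the $q$-grading $n(\lambda)$ --- none of which is addressed. Your fallback observation that both sides agree at $q=1$ is, as you note yourself, no evidence about the grading. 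In the quasisymmetric route, you reduce the theorem to ``the existence of a weight-preserving bijection carrying $\text{inv}$ to $\text{cocharge}$,'' which is a restatement of the Kostka--Foulkes positivity phenomenon, i.e., the theorem itself; invoking jeu-de-taquin invariance of cocharge does not produce such a bijection. As written, the proposal is a sensible research plan whose two central steps (operator identity, or inv-to-cocharge bijection) are exactly the gaps, so it does not constitute a proof; the defensible alternative is to do what the paper does and cite \cite{lltpos} and \cite{charge}.
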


The central problem of this paper is to generalize cocharge in order to prove an analogous combinatorial formula for the Schur expansion of any horizontal-strip LLT polynomial. Our main result, Theorem \ref{thm:caterpillar}, is such a combinatorial formula in the case where no three rows of $\bm\lambda$ pairwise attack each other. Our strategy is to define a weighted graph $\Pi(\bm\lambda)$ associated to $\bm\lambda$ and to use it to express linear recurrences of LLT polynomials.

\section{A weighted graph description of horizontal-strip LLT polynomials}\label{sec:wgraph}

We begin by defining our weighted graph $\Pi(\bm\lambda)$. 

\begin{definition} \label{def:wgraph} Let $R$ and $R'$ be rows. We define the integer \begin{equation}\label{eq:mrirj} M(R,R')=\begin{cases} |c(R)\cap c(R')| & \text{ if }\ell(R)\leq\ell(R'),\\|c(R)\cap c(R'^+)| & \text{ if }\ell(R)>\ell(R'),\end{cases}\end{equation} where as before, $R'^+=\{(1,j+1): \ (1,j)\in R'\}$. Note that $0\leq M(R,R')\leq\min\{|R|,|R'|\}$. \end{definition}

\begin{definition} Let $\bm\lambda=(R_1,\ldots,R_n)$ be a horizontal-strip. Consider the cells of $\bm\lambda$ that are the rightmost cells in their row and label these cells $1,\ldots,n$ in \emph{content reading order}, meaning in order of increasing content and from bottom to top along constant content lines. We define the \emph{weighted graph} $\Pi(\bm\lambda)$ with vertices $v_1,\ldots,v_n$ as follows. The weight of a vertex $v_i$, denoted $|v_i|$, is the size of the row $R_{i'}$ whose rightmost cell is labelled $i$. Vertices $v_i$ and $v_j$ corresponding to rows $R_{i'}$ and $R_{j'}$ with $i'<j'$ are joined by an edge if $R_{i'}$ and $R_{j'}$ attack each other and the weight of the edge $(v_i,v_j)$, denoted $M_{i,j}$, is given by $M(R_{i'},R_{j'})$.
\end{definition}

\begin{example} \label{ex:wgraph} Let $\bm\lambda=(R_1,R_2,R_3)=(4/0,5/2,2/0)$ as in Example \ref{ex:llt}. We have drawn $\bm\lambda$ with the rightmost cells in each row labelled in content reading order, and we have drawn $\Pi(\bm\lambda)$ below right. We have $M(R_1,R_2)=2$, $M(R_1,R_3)=2$, and $M(R_2,R_3)=1$.

$$
\begin{tikzpicture}
\draw (-0.75,0.75) node (0) {$\bm \lambda=$};
\draw (0,0) -- (0.5,0) -- (0.5,-0.5) -- (0,-0.5) -- (0,0) (0,1.5) -- (0.5,1.5) -- (0.5,2) -- (0,2) -- (0,1.5) (0.5,-0.5) -- (0.5,0) -- (1,0) -- (1,-0.5) -- (0.5,-0.5) (0.5,1.5) -- (0.5,2) -- (1,2) -- (1,1.5) -- (0.5,1.5) (1,0) -- (1.5,0) -- (1.5,-0.5) -- (1,-0.5) -- (1,0) (1,0.5) -- (1,1) -- (1.5,1) -- (1.5,0.5) -- (1,0.5) (1.5,0) -- (1.5,-0.5) -- (2,-0.5) -- (2,0) -- (1.5,0) (1.5,0.5) -- (1.5,1) -- (2,1) -- (2,0.5) -- (1.5,0.5) (2,0.5) -- (2,1) -- (2.5,1) -- (2.5,0.5) -- (2,0.5);
\draw (0.75,1.75) node (4) {1} (1.75,-0.25) node (7) {2} (2.25,0.75) node (9) {3} (3.25,-0.25) node (a) {$R_1$} (3.25,0.75) node (b) {$R_2$} (3.25,1.75) node (c) {$R_3$} (6,0.75) node (p) {$\Pi(\bm\lambda)=$};
\node[shape=circle,draw=black,minimum size=10mm](1) at (7,-0.5) {4};
\node at (7,-1.25) {$v_2$};
\node[shape=circle,draw=black,minimum size=10mm](2) at (10,-0.5) {3};
\node at (10,-1.25) {$v_3$};
\node[shape=circle,draw=black,minimum size=10mm](3) at (8.5,2.1) {2};
\node at (8.5,2.85) {$v_1$};
\path [-](1) edge node [below]{$2$} (2);
\path [-](1) edge node [above left]{$2$} (3);
\path [-](2) edge node [above right]{$1$} (3);
\end{tikzpicture}
$$\end{example}

\begin{remark} It follows immediately from the definition that $M(R,R')=M(R'^+,R)$ and therefore the map $\kappa$ from Proposition \ref{prop:lltfacts}, Part 3 preserves our weighted graph. We can think of the integers $M(R_i,R_j)$ as measuring the extent to which the rows $R_i$ and $R_j$ attack each other. We now present a result to make this idea precise. \end{remark}

\begin{theorem} \label{thm:maxinv} Let $\bm\lambda=(\lambda^{(1)},\ldots,\lambda^{(n)})$ be a multiskew partition. Define the integer \begin{equation}M(\bm\lambda)=\sum_{1\leq i<j\leq n}\sum_{\substack{R\text{ a row of }\lambda^{(i)}\\R'\text{ a row of }\lambda^{(j)}}}M(R,R').\end{equation} Then every $\bm T\in\text{SSYT}_{\bm\lambda}$ has $\text{inv}(\bm T)\leq M(\bm\lambda)$. Moreover, this maximum is attained. \end{theorem}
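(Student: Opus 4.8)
The plan is to reduce everything to the case of two rows, where we can compute directly. First I would observe that $\text{inv}(\bm T)=\sum_{i<j}\text{inv}(T^{(i)},T^{(j)})$, where $\text{inv}(T^{(i)},T^{(j)})$ counts inversions between the pair, and similarly $M(\bm\lambda)$ is a sum over pairs of rows $R$ of $\lambda^{(i)}$ and $R'$ of $\lambda^{(j)}$. Since within a single skew shape no two cells attack (distinct rows of one skew shape have disjoint contents shifted by at least... well, one checks rows of a fixed skew diagram never attack), all inversions are between cells in rows coming from different $\lambda^{(i)}$'s, so it suffices to prove $\text{inv}$ restricted to any attacking pair of rows $R,R'$ (with $R$ earlier in the tuple) is at most $M(R,R')$, and that equality can be achieved simultaneously across all pairs.

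For the upper bound on a single pair, suppose $R=a/b$ comes before $R'=a'/b'$ in the tuple. An inversion between a cell $u\in R$ and $v\in R'$ requires either $c(u)=c(v)$ or $c(u)=c(v)+1$. Consider the two cases of Definition \ref{def:wgraph}. If $\ell(R)\le\ell(R')$, then I claim the attacking pairs $(u,v)$ with $u\in R$, $v\in R'$ and $c(u)=c(v)$ are in bijection with $c(R)\cap c(R')$, while pairs with $c(u)=c(v)+1$ correspond to $c(R)\cap c(R'^+)$; the key point is that at most one of these two types of attack can actually produce an inversion for a given adjacent pair of contents, because the inequality conditions $T^{(i)}(u)>T^{(j)}(v)$ and $T^{(j)}(v)>T^{(i)}(u)$ on overlapping cells of increasing rows are mutually exclusive. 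A short counting argument (tracking, for each content $c$, whether the entry pattern is increasing or not as we move right along both rows) shows the total is at most $\max\{|c(R)\cap c(R')|,|c(R)\cap c(R'^+)|\}$, and one verifies this max equals $M(R,R')$ under the relevant hypothesis on $\ell$; the case $\ell(R)>\ell(R')$ is symmetric after applying $\kappa$ and the identity $M(R,R')=M(R'^+,R)$ from the Remark.

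For attainment, I would exhibit an explicit tableau: fill the rows from large entries at the bottom of the content-reading order to small entries at the top, i.e. assign to each row a constant (or nearly constant, forced by semistandardness) value so that the later row in each attacking pair gets the smaller value when the overlap is of the $c(u)=c(v)$ type and the larger value when it is of the $c(u)=c(v)+1$ type. Concretely, using the content reading order labeling from the definition of $\Pi(\bm\lambda)$, assign value $k$ to the row whose rightmost cell is labeled $n+1-k$; semistandardness within each skew diagram holds because rows there have well-separated contents, and one checks this fills every attacking overlap with an inversion, so $\text{inv}(\bm T)=M(\bm\lambda)$. The main obstacle I anticipate is the bookkeeping in the upper-bound step: carefully showing that for each relevant content value only one unit of inversion is available and that summing these units gives exactly $M(R,R')$ rather than something larger, especially handling the boundary contents $\ell$ and $r$ of the two rows where the shift by $+1$ interacts with which row starts first. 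I would isolate this as a lemma about two rows and prove it by induction on $|R|+|R'|$, peeling off the leftmost column of the overlap.
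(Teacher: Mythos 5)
Your reduction to pairs of rows and your attainment construction are fine: inversions are by definition only counted between cells of different shapes $\lambda^{(i)},\lambda^{(j)}$ with $i<j$, and filling each row with a constant value determined by ranking rows by the position of their rightmost cell (reversed content reading order) does produce a semistandard filling achieving $M(R,R')$ inversions for every pair simultaneously; this is a minor variant of the paper's construction, which instead ranks rows by their \emph{leftmost} cells in reverse content reading order. The genuine gap is in the upper bound for a single pair of rows, which you yourself flag as the ``main obstacle'' and leave to an unproven ``short counting argument.'' The exclusivity you state -- for a fixed cell $u\in R$, the two possible inversions of $u$ with $R'$ (same content, or content one less) cannot both occur, because the entries of $R'$ weakly increase -- is indeed the right key observation, but summing it over the cells of $R$ only yields the bound $|c(R)\cap(c(R')\cup c(R'^+))|$, which can exceed $M(R,R')$ by one: take $c(R)=\{0,1,2\}$ and $c(R')=\{1\}$, where two cells of $R$ attack $R'$ but $M(R,R')=1$. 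This happens exactly when $\ell(R)\leq\ell(R')$ and $r(R)>r(R')$, so your per-content bookkeeping over $R$ alone cannot reach the claimed bound $\max\{|c(R)\cap c(R')|,|c(R)\cap c(R'^+)|\}$ (which, to be fair, does equal $M(R,R')$ in every case).

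The missing idea is to use the same exclusivity symmetrically for the \emph{later} row: a fixed cell $v\in R'$ also makes at most one inversion with $R$, since the two candidate partners (same content, or content one more) lie in the weakly increasing row $R$. One then bounds $\text{inv}_{R,R'}(\bm T)$ by the minimum of (number of cells of $R$ that attack $R'$) and (number of cells of $R'$ that attack $R$), and this minimum is exactly $M(R,R')$ in both cases of the definition -- this is the paper's one-paragraph argument. Your fallback plan (induction on $|R|+|R'|$, peeling off the leftmost overlap column) might be made to work, but as written the decisive counting step is absent, so the first half of the theorem is not yet proved.
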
 

\begin{remark} Theorem \ref{thm:maxinv} tells us that $M(\bm\lambda)$ is the largest power of $q$ in the LLT polynomial $G_{\bm\lambda}(\bm x;q)$. In particular, if $G_{\bm\lambda}(\bm x;q)=G_{\bm\mu}(\bm x;q)$, then $M(\bm\lambda)=M(\bm\mu)$. We also remark that by Proposition \ref{prop:lltfacts}, Part 5, the smallest power of $q$ is $I(\bm\lambda)-M(\omega(\bm\lambda))$. If $\bm\lambda$ is a horizontal-strip, the largest power of $q$ in the LLT polynomial is the total edge weight of $\Pi(\bm\lambda)$. In the Hall--Littlewood case of Theorem \ref{thm:hl}, we have $M(\bm\lambda)=n(\lambda)=\sum_i(i-1)\lambda_i$. \end{remark}

\begin{proof} We first observe that an entry $x\in\bm T$ makes at most one inversion with the entries in each row. If $x$ made an inversion with $y\in \bm T$ and $z\in\bm T$ in the same row with $y$ to the left of $z$, then we would require $y>x>z$, but $y\leq z$ because the rows of $\bm T$ are weakly increasing. \\

To prove the first statement, we show that for rows $R$ of $\lambda^{(i)}$ and $R'$ of $\lambda^{(j)}$ with $i<j$, the number of inversions $\text{inv}_{R,R'}(\bm T)$ of $\bm T$ between cells in these rows is at most $M(R,R')$. Suppose that $\ell(R)\leq\ell(R')$ and let $v_1$ be the leftmost cell of row $R'$. If there is no cell $u_1$ of $R$ with $c(u_1)=c(v_1)$, then $\text{inv}_{R,R'}(\bm T)=0$, so suppose otherwise. Let $v_2,\ldots,v_m$ be the cells in $R'$ to the right of $v_1$ and let $u_2,\ldots,u_{m'}$ be the cells in $R$ to the right of $u_1$ as pictured below left. By our observation above, we have $\text{inv}_{R,R'}(\bm T)\leq\min\{m,m'\}=M(R,R')$. Similarly, if $\ell(R)>\ell(R')$, let $u_1$ be the leftmost cell of row $R$. If there is no cell $v_1$ of $R'$ with $c(v_1)=c(u_1)-1$, then $\text{inv}_{R,R'}(\bm T)=0$, so suppose otherwise. Let $v_2,\ldots,v_m$ be the cells in $R'$ to the right of $v_1$ and let $u_2,\ldots,u_{m'}$ be the cells in $R$ to the right of $u_1$ as pictured below right. Again, by our observation above, we have $\text{inv}_{R,R'}(\bm T)\leq\min\{m,m'\}=M(R,R')$. 

$$\tableau{&&&v_1&v_2&\cdot&\cdot&\cdot&\cdot&\cdot&\cdot&\cdot&v_m\\ \\ \cdot & \cdot & \cdot & u_1&u_2&\cdot&\cdot&\cdot&u_{m'}}\hspace{50pt}\tableau{\cdot&\cdot&\cdot&v_1&v_2&\cdot&\cdot&\cdot&v_m\\ \\ &&&&u_1&u_2&\cdot & \cdot & \cdot & \cdot&\cdot&\cdot&u_{m'}}$$

To prove the second statement, we construct a tableau $\bm T\in\text{SSYT}_{\bm\lambda}$ with $\text{inv}(\bm T)=M(\bm\lambda)$. Consider the cells of $\bm\lambda$ that are the leftmost cells in their row. We label these cells $1,2,3,\ldots$ in reverse content reading order, meaning in order of decreasing content and from top to bottom along constant content lines. We then fill each cell with the label of the leftmost cell in its row. As an illustration, when $\bm\lambda=(4/0,5/2,2/0)$, the tableau we have constructed is the tableau $\bm T$ in Example \ref{ex:llt}.\\

By construction, the rows of $\bm T$ are constant and therefore weakly increasing, and if a cell $u$ is directly below a cell $v$ in the same skew diagram, then the leftmost cell in the row of $u$ must precede that of $v$ in reverse content reading order, so $\bm T(u)<\bm T(v)$ and indeed $\bm T\in\text{SSYT}_{\bm\lambda}$. Finally, we show that for rows $R$ of $\lambda^{(i)}$ and $R'$ of $\lambda^{(j)}$ with $i<j$, we have $\text{inv}_{R,R'}(\bm T)=M(R,R')$. If $\ell(R)\leq\ell(R')$, then every cell in $R$ has a strictly larger entry than every cell in $R'$, so we have an inversion $\bm T(u)>\bm T(v)$ for every $u\in R$ and $v\in R'$ with $c(u)=c(v)$, meaning $|c(R)\cap c(R')|=M(R,R')$ inversions in all. Similarly, if $\ell(R)>\ell(R')$, then every cell in $R'$ has a strictly larger entry than every cell in $R$, so we have an inversion $\bm T(v)>\bm T(u)$ for every $u\in R$ and $v\in R'$ with $c(u)=c(v)+1$, meaning $|c(R)\cap c(R'^+)|=M(R,R')$ inversions in all. \end{proof}

We continue to use the integers $M(R,R')$ to describe relationships between rows. The following definition will be justified by Lemma \ref{lem:commuting}. 

\begin{definition} We say that two rows $R$ and $R'$ \emph{commute} if $M(R,R')=M(R',R)$. We write $R\leftrightarrow R'$ if $R$ and $R'$ commute and $R\nleftrightarrow R'$ otherwise. \end{definition}

\begin{proposition} \label{prop:mrirj} Let $R$ and $R'$ be rows with $\ell(R)\leq\ell(R')$. 
\begin{enumerate}
\item If $r(R)<\ell(R')-1$, then \begin{equation}M(R,R')=M(R',R)=0,\text{ so }R\leftrightarrow R'.\end{equation}

\item If $\ell(R')=\ell(R)$ or $r(R')\leq r(R)$, then \begin{equation}M(R,R')=M(R',R)=\min\{|R|,|R'|\},\text{ so }R\leftrightarrow R'.\end{equation}

\item Otherwise, we have $\ell(R)<\ell(R')\leq r(R)+1\leq r(R')$ and \begin{equation}M(R,R')=r(R)-\ell(R')+1\text{ and }M(R',R)=r(R)-\ell(R')+2,\text{ so }R\nleftrightarrow R'.\end{equation}
\end{enumerate}
In particular, if $R\leftrightarrow R'$, then $M(R,R')$ is either $0$ or $\min\{|R|,|R'|\}$. 
\end{proposition}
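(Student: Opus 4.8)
The plan is to reduce the entire statement to elementary counting with integer intervals, since for a row $R$ the content set $c(R)=\{\ell(R),\ell(R)+1,\ldots,r(R)\}$ is just an interval of consecutive integers with $|R|=r(R)-\ell(R)+1$, and $c(R'^+)=\{\ell(R')+1,\ldots,r(R')+1\}$ is that interval shifted by one. Under the standing hypothesis $\ell(R)\leq\ell(R')$, Definition \ref{def:wgraph} gives $M(R,R')=|c(R)\cap c(R')|$ directly. For $M(R',R)$ I would split at once on whether $\ell(R')=\ell(R)$, in which case $M(R',R)=|c(R')\cap c(R)|=M(R,R')$ by commutativity of intersection, or $\ell(R')>\ell(R)$, in which case $M(R',R)=|c(R')\cap c(R^+)|=\bigl|\{\ell(R'),\ldots,r(R')\}\cap\{\ell(R)+1,\ldots,r(R)+1\}\bigr|$. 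Thus in every case both quantities are sizes of intersections of two explicit integer intervals, and the whole proposition reduces to reading off the larger of the two left endpoints and the smaller of the two right endpoints. (I will assume $R$ and $R'$ are nonempty; an empty row has $M=0$ against everything and commutes trivially, which is consistent with the statement.)

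Next I would treat the three cases in turn. For Part 1, the hypothesis $r(R)<\ell(R')-1$ together with $\ell(R)\leq r(R)$ forces $\ell(R')>\ell(R)$, and since $r(R)+1<\ell(R')$ the interval $c(R)$ is disjoint from $c(R')$ and $c(R^+)$ is disjoint from $c(R')$, so $M(R,R')=M(R',R)=0$ and $R\leftrightarrow R'$. For Part 2, if $\ell(R')=\ell(R)$ then $c(R)$ and $c(R')$ share a left endpoint, so one is an initial segment of the other, giving $|c(R)\cap c(R')|=\min\{|R|,|R'|\}$, and $M(R',R)=M(R,R')$ by the symmetry noted above; in the remaining possibility $\ell(R')>\ell(R)$ and $r(R')\leq r(R)$, one has $c(R')\subseteq c(R)$ and also $c(R')\subseteq c(R^+)$ (because $\ell(R')\geq\ell(R)+1$ and $r(R')\leq r(R)\leq r(R)+1$), hence both quantities equal $|R'|=\min\{|R|,|R'|\}$.

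For Part 3, the word ``otherwise'' unpacks to the negations of the previous hypotheses, namely $r(R)\geq\ell(R')-1$, $\ell(R')>\ell(R)$, and $r(R')>r(R)$, and I would first record that these combine into the chain $\ell(R)<\ell(R')\leq r(R)+1\leq r(R')$ asserted in the statement. Then $c(R)\cap c(R')=\{\ell(R'),\ldots,r(R)\}$, so $M(R,R')=r(R)-\ell(R')+1$, while $c(R')\cap c(R^+)=\{\ell(R'),\ldots,r(R)+1\}$, so $M(R',R)=r(R)-\ell(R')+2$; these differ, so $R\nleftrightarrow R'$. The concluding ``in particular'' is then immediate: Parts 1--3 are mutually exclusive and exhaustive (Part 3 is literally the negation of Parts 1 and 2), and only Parts 1 and 2 produce commuting rows, in which cases $M(R,R')$ came out as $0$ or $\min\{|R|,|R'|\}$ respectively.

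I do not expect a real obstacle here; the argument is pure bookkeeping. The one point that needs care is the asymmetric $+1$ shift built into the definition of $M$: in each case one must keep track of which branch governs $M(R',R)$ — that is, whether $\ell(R')>\ell(R)$ holds strictly — and hence whether the shifted interval $\{\ell(R)+1,\ldots,r(R)+1\}$ or the unshifted $c(R)$ is the one being intersected with $c(R')$. Getting this branching right is precisely what makes the asymmetry $M(R,R')\neq M(R',R)$ surface in Part 3 and vanish in Parts 1 and 2.
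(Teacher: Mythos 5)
Your proposal is correct and follows essentially the same route as the paper: a direct computation from Definition \ref{def:wgraph}, treating $c(R)$, $c(R')$, and the shifted interval $c(R^+)$ as integer intervals and analyzing the same three cases (including the same subcase split of Part 2 and the same chain of inequalities in Part 3). The extra care you take about which branch of the definition governs $M(R',R)$ is exactly the point the paper's proof also implicitly relies on, so nothing further is needed.
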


\begin{proof} We compute directly from the definition. If $r(R)<\ell(R')-1$, then $R\cap R'=R'\cap R^+=\emptyset$. If $\ell(R')=\ell(R)$, then either $R\subseteq R'$ or $R'\subseteq R$, and if $\ell(R)<\ell(R')$ and $r(R')\leq r(R)$, then $R'\subseteq R$ and $R'\subseteq R^+$. Otherwise, the only remaining case is when $\ell(R)\leq\ell(R')-1\leq r(R)\leq r(R')-1$, in which case we have
\begin{align}M(R,R')&=|c(R)\cap c(R')|=|\{\ell(R'),\ldots,r(R)\}|=r(R)-\ell(R')+1\text{ and }\\
M(R',R)&=|c(R')\cap c(R^+)|=|\{\ell(R'),\ldots,r(R)+1\}|=r(R)-\ell(R')+2.\end{align}
This completes the proof.
\end{proof}

\begin{remark} For a more visual description, we have that two rows $R$ and $R'$ commute if and only if they are disjoint and separated by at least one cell, or if one is contained in the other. In the examples below, only the pairs of rows on the left and on the right commute.$$\tableau{&&&&&& \ & \ & \ \\ \\ \ & \ & \ & \ & \ }\hspace{60pt}\tableau{&&& \ & \ & \ \\ \\ \ & \ & \ & \ & \ }\hspace{100pt}\tableau{& \ & \ & \ \\ \\ \ & \ & \ & \ & \ }$$
\end{remark}

We now introduce the language of LLT-equivalence to describe some local linear relations of LLT polynomials. If $\bm\lambda=(\lambda^{(1)},\ldots,\lambda^{(n)})$ and $\bm\mu=(\mu^{(1)},\ldots,\mu^{(n')})$ are multiskew partitions, then we denote by $\bm\lambda\cdot\bm\mu$ the concatenation $(\lambda^{(1)},\ldots,\lambda^{(n)},\mu^{(1)},\ldots,\mu^{(n')})$. 

\begin{definition} Multiskew partitions $\bm\lambda$ and $\bm\mu$ are \emph{LLT-equivalent}, denoted $\bm\lambda\cong\bm\mu$, if for every multiskew partition $\bm\nu$ we have the equality of LLT polynomials \begin{equation}G_{\bm\lambda\cdot\bm\nu}(\bm x;q)=G_{\bm\mu\cdot\bm\nu}(\bm x;q).\end{equation}
More generally, finite formal $\mathbb Q(q)$-combinations of multiskew partitions $\sum_i a_i(q)\bm\lambda_i$ and $\sum_j b_j(q)\bm\mu_j$ are \emph{LLT-equivalent}, denoted $\sum_i a_i(q)\bm\lambda_i\cong\sum_jb_j(q)\bm\mu_j$, if for every $\bm\nu$ we have \begin{equation}\sum_i a_i(q)G_{\bm\lambda_i\cdot\bm\nu}(\bm x;q)=\sum_jb_j(q)G_{\bm\mu_j\cdot\bm\nu}(\bm x;q).\end{equation}
\end{definition}
\begin{remark} By repeatedly applying the map $\kappa$ from Proposition \ref{prop:lltfacts}, Part 3, we have that if $\bm\lambda\cong\bm\mu$, then $G_{\bm\nu\cdot\bm\lambda\cdot\bm\nu'}(\bm x;q)=G_{\bm\nu\cdot\bm\mu\cdot\bm\nu'}(\bm x;q)$ for every multiskew partitions $\bm\nu$ and $\bm\nu'$. We can think of LLT-equivalence as a \emph{local} linear relation because we can locally replace $\bm\lambda$ with $\bm\mu$ while preserving the LLT polynomial. \end{remark}

We can prove an LLT-equivalence by rearranging to an LLT-equivalence of $\mathbb N[q]$-linear combinations of multiskew partitions and finding an appropriate bijection of tableaux. 

\begin{theorem} \label{thm:llteq} \cite[Theorem 2.2.1]{lltkschurband} Let $\sum_i q^{a_i}\bm\lambda_i$ and $\sum_j q^{b_j}\bm\mu_j$ be $\mathbb N[q]$-linear combinations of multiskew partitions. Then they are LLT-equivalent if there exists a bijection \begin{equation}f:\bigsqcup_i \text{SSYT}_{\bm\lambda_i}\to\bigsqcup_j\text{SSYT}_{\bm\mu_j}\end{equation} such that if $f$ maps $\bm T\in\text{SSYT}_{\bm\lambda_i}$ to $\bm U\in\text{SSYT}_{\bm\mu_j}$, then \begin{equation}\label{eq:inversionobeying} \text{inv}(\bm T)+a_i=\text{inv}(\bm U)+b_j,\end{equation} and for every $c\in\mathbb Z$, the multiset of entries in cells of content $c$ are preserved, that is \begin{equation}\label{eq:contentpreserving} \{\bm T(u): \ c(u)=c\}=\{\bm U(u): \ c(u)=c\}.\end{equation} 
\end{theorem}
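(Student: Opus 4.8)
The plan is to fix an arbitrary multiskew partition $\bm\nu$ and promote the given bijection $f$ to a bijection on the larger sets of tableaux, thereby establishing $\sum_i q^{a_i}G_{\bm\lambda_i\cdot\bm\nu}(\bm x;q)=\sum_j q^{b_j}G_{\bm\mu_j\cdot\bm\nu}(\bm x;q)$. First I would observe that every $\bm S\in\text{SSYT}_{\bm\lambda_i\cdot\bm\nu}$ factors uniquely as $\bm S=\bm T\cdot\bm V$ with $\bm T\in\text{SSYT}_{\bm\lambda_i}$ and $\bm V\in\text{SSYT}_{\bm\nu}$, that $\bm x^{\bm S}=\bm x^{\bm T}\bm x^{\bm V}$, and that, since every component of $\bm\lambda_i$ precedes every component of $\bm\nu$ in the concatenation, the inversions split as $\text{inv}(\bm S)=\text{inv}(\bm T)+\text{inv}(\bm V)+\text{cr}(\bm T,\bm V)$, where $\text{cr}(\bm T,\bm V)$ counts the inversions between a cell of the $\bm\lambda_i$-part and a cell of the $\bm\nu$-part.

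The key point, which I expect to be the one requiring the most care, is that $\text{cr}(\bm T,\bm V)$ depends on $\bm T$ only through the tuple of content-multisets $\bigl(\{\bm T(u):c(u)=c\}\bigr)_{c\in\bZ}$. Indeed, unwinding the definition of inversion, a fixed cell $v$ of the $\bm\nu$-part with $c(v)=c$ and $\bm V(v)=b$ contributes an inversion with exactly those cells $u$ of the $\bm\lambda_i$-part having $c(u)=c$ and $\bm T(u)>b$, plus those having $c(u)=c+1$ and $\bm T(u)<b$; summing over $v$ expresses $\text{cr}(\bm T,\bm V)$ as a function of the content-multisets of $\bm T$ and of $\bm V$ alone. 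Hence, if $f(\bm T)=\bm U$, the content-preserving hypothesis \eqref{eq:contentpreserving} gives both $\text{cr}(\bm T,\bm V)=\text{cr}(\bm U,\bm V)$ for every $\bm V\in\text{SSYT}_{\bm\nu}$ and $\bm x^{\bm T}=\bm x^{\bm U}$ (the total content of $\bm T$ being the union of its content-multisets).

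I would then define $F\colon\bigsqcup_i\text{SSYT}_{\bm\lambda_i\cdot\bm\nu}\to\bigsqcup_j\text{SSYT}_{\bm\mu_j\cdot\bm\nu}$ by $F(\bm T\cdot\bm V)=f(\bm T)\cdot\bm V$, which is a bijection because $f$ is. If $F$ sends $\bm S=\bm T\cdot\bm V$ to $\bm S'=\bm U\cdot\bm V$, then $\bm x^{\bm S}=\bm x^{\bm S'}$, and combining the inversion split with \eqref{eq:inversionobeying} and $\text{cr}(\bm T,\bm V)=\text{cr}(\bm U,\bm V)$ yields $\text{inv}(\bm S)+a_i=\text{inv}(\bm S')+b_j$. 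Grouping the monomials of $\sum_i q^{a_i}G_{\bm\lambda_i\cdot\bm\nu}(\bm x;q)$ by the tableau indexing them and transporting along $F$ then produces $\sum_j q^{b_j}G_{\bm\mu_j\cdot\bm\nu}(\bm x;q)$, which is the desired LLT-equivalence. The only bookkeeping subtlety is that the $\bm\lambda_i$ (and $\bm\mu_j$) need not be distinct, so all disjoint unions should be read formally, tagged by their index, in accordance with the formal $\mathbb N[q]$-linear combinations in the statement; this does not affect the argument.
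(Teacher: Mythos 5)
Your proof is correct. The paper does not prove this statement itself --- it is quoted from Miller's \cite[Theorem 2.2.1]{lltkschurband} --- and your argument (factoring $\bm S=\bm T\cdot\bm V$, splitting $\text{inv}(\bm S)$ into $\text{inv}(\bm T)+\text{inv}(\bm V)+\text{cr}(\bm T,\bm V)$, noting that the cross term depends on $\bm T$ only through its content-multisets so that \eqref{eq:contentpreserving} gives $\text{cr}(\bm T,\bm V)=\text{cr}(\bm U,\bm V)$ and $\bm x^{\bm T}=\bm x^{\bm U}$, and transporting along $f\times\mathrm{id}$) is exactly the standard argument underlying the cited result.
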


We now use Theorem \ref{thm:llteq} to establish some valuable LLT-equivalence relations. These relations appear in \cite[Lemma 5.2]{lltepos} and \cite[Theorem 2.1]{lltcombe} in terms of operators on Dyck paths and Schr\"{o}der paths.

\begin{lemma} \label{lem:llteq} Let $R$ and $R'$ be rows such that $\ell(R')=r(R)+1$. We have the LLT-equivalence \begin{equation}q(R,R')+(R\cup R')\cong q(R\cup R')+(R',R).\end{equation}
\end{lemma}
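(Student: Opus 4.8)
The plan is to apply Theorem~\ref{thm:llteq} with the two $\mathbb N[q]$-linear combinations $q(R,R') + (R\cup R')$ on the left and $q(R\cup R') + (R',R)$ on the right. Writing $R = a/b$ and $R' = c/a$ (so that $\ell(R') = r(R)+1 = a$ forces this overlap-free adjacency), the row $R\cup R'$ is just the single row $c/b$. So I must construct an inversion-shifting, content-multiset-preserving bijection
\[
f:\ \mathrm{SSYT}_{(R,R')} \ \sqcup\ \mathrm{SSYT}_{R\cup R'}\ \longrightarrow\ \mathrm{SSYT}_{R\cup R'}\ \sqcup\ \mathrm{SSYT}_{(R',R)},
\]
where a tableau from the $q(R,R')$ summand must land either in the $q(R\cup R')$ summand (inv drops by $0$) or the $(R',R)$ summand (inv drops by $1$), and a tableau from $R\cup R'$ on the left lands in $(R',R)$ on the right with inv increasing by $0$. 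Since each of $R$, $R'$, $R\cup R'$, $(R',R)$ is supported on the same set of contents $c(R)\sqcup c(R')$ with one cell per content, the content-preservation condition~\eqref{eq:contentpreserving} just says the map fixes the entry sitting on each content, so the entire problem is a combinatorial identity about \emph{which} reading words (weakly increasing on each of the relevant segments, with the column constraint in the two-row cases) are admissible.

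**The bijection.** A pair $(T,T')\in\mathrm{SSYT}_{(R,R')}$ is a pair of weakly increasing words $x_1\le\cdots\le x_p$ (on contents $b,\dots,a-1$) and $y_1\le\cdots\le y_r$ (on contents $a,\dots,c-1$), with no column constraint because $R$ and $R'$ share no content — they only \emph{attack}, via the content pair $(a-1,a)$, contributing an inversion exactly when $y_1 > x_p$ (this is the $c(u)=c(v)+1,\ T^{(2)}(v)>T^{(1)}(u)$ case of the inversion definition). An element of $\mathrm{SSYT}_{R\cup R'}$ is a single weakly increasing word $z_1\le\cdots\le z_{p+r}$; an element of $\mathrm{SSYT}_{(R',R)}$ is a pair $(U',U)$ where now $R'$ comes first, so they share \emph{no} content either but attack via $(a-1,a)$ with $R$ on the right, giving an inversion exactly when the last entry of $U'$ is $\ge$ the first entry of $U$ — wait, more precisely when $U'(\text{content }c-1)$ and $U(\text{content }a-1)$\dots I'll need to recompute the attacking pair for the reversed order carefully. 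The natural map: given $(T,T')$, if $y_1 > x_p$ (an inversion) concatenate into the single row $x_1\le\cdots\le x_p\le y_1\le\cdots$ — no, that is not weakly increasing only when $y_1>x_p$, so this is exactly the \emph{non}-inversion case. Let me re-flip: if $x_p \le y_1$ (no inversion), the concatenation $x_1\cdots x_p y_1 \cdots y_r$ is a legal single row, so send it to that element of $\mathrm{SSYT}_{R\cup R'}$ on the right; if $x_p > y_1$ (inversion, the $q$ is used), sort-merge or simply swap to get a pair in $(R',R)$ whose attacking configuration now contributes $0$, absorbing the $q$. For elements of $\mathrm{SSYT}_{R\cup R'}$ on the \emph{left}, split the single row at the content boundary $a-1\mid a$ into $(U',U)\in\mathrm{SSYT}_{(R',R)}$ placing the $R'$-part first; a single row is weakly increasing, so the split word has last-of-first $\le$ first-of-second, which I will check is precisely the configuration giving $0$ inversions in order $(R',R)$, so inv is preserved ($0=0$).

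**Verifying the three inv-bookkeeping cases.** I must check: (i) $(T,T')$ with $x_p\le y_1$: $\mathrm{inv}=0$, maps to the concatenated single row with $\mathrm{inv}=0$, and $0 + 1 \ne 0 + 0$ — that fails the $a_i=1$ accounting! So this branch must instead go to the $(R',R)$ summand, and the $\mathrm{inv}=0$ pairs $(T,T')$ must map to pairs $(U',U)$ with $\mathrm{inv}=1$. So the correct picture is: $q(R,R')$-tableaux with no inversion pair up with $(R',R)$-tableaux having one inversion (both sides $=1$); $q(R,R')$-tableaux \emph{with} an inversion pair up with $R\cup R'$-tableaux (both sides: $1+1$? no — $\mathrm{inv}(T,T')=1$, $a_i=1$, total $2$; $\mathrm{inv}$ of single row $=0$, $b_j=0$ — mismatch). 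Hmm. So actually it must be: a $(T,T')$ with inversion ($\mathrm{inv}=1$, $a_i=1$, total $2$) — there is no summand with constant $2$. The resolution is that the $q(R,R')$ summand has $a_i=1$, and $(T,T')$ always has a well-defined $\mathrm{inv}\in\{0,1\}$ from the single attacking content-pair, but the \emph{legal} pairs... I will carefully tabulate, but the essential content is: $\mathrm{SSYT}_{(R,R')}$ with the weight of $q$ biject onto a subset, and the leftover of $\mathrm{SSYT}_{R\cup R'}$ (left) versus $\mathrm{SSYT}_{(R',R)}$ (right) matched by the boundary-split.

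\textbf{The main obstacle.} The hard part is getting the inversion arithmetic in~\eqref{eq:inversionobeying} exactly right, because the attacking pair between $R$ and $R'$ behaves differently in the orders $(R,R')$ versus $(R',R)$ versus when the two rows are fused: I will need to pin down, for a pair of entries $(s,t)$ on the adjacent contents $a-1$ and $a$, that it is an inversion in exactly one of the two two-row orderings (and never in the single-row), which gives the clean two-way split. Concretely I expect the bijection to be: a pair in $\mathrm{SSYT}_{(R,R')}$ with entries $x_p$ (content $a-1$) and $y_1$ (content $a$); it contributes to $q(R,R')$ regardless; if $x_p \le y_1$ it is \emph{not} an inversion so with the $q$ it has weight $1$, and we send it to the pair in $(R',R)$ with the same entries, where swapping the row order turns $x_p\le y_1$ into the \emph{inverted} configuration of weight $1$ — matching. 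If $x_p > y_1$ it \emph{is} an inversion, weight $1+1=2$ — impossible, so such pairs must be excluded, meaning they instead biject with... the single row via a sort. I will lay this out as an explicit case analysis on the relation between $x_p$ and $y_1$, producing a bijection whose well-definedness and inversion-shift are then routine but must be stated; and I will use Proposition~\ref{prop:lltfacts}, Part~2 implicitly to handle the trivial separated cases already folded into Proposition~\ref{prop:mrirj}. The whole argument is purely a finite bijective check on words on a fixed content set; no jeu de taquin or cocharge is needed here.
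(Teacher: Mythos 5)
Your overall strategy is the paper's: invoke Theorem~\ref{thm:llteq}, note that every shape involved has exactly one cell per content so a content-preserving map is forced on each piece, and trisect according to how the entry $x$ at content $r(R)$ compares with the entry $y$ at content $\ell(R')$. However, the inversion bookkeeping — which is the entire content of this lemma — is done backwards, and your tentative final matching does not satisfy \eqref{eq:inversionobeying}. The attacking relation is asymmetric in the order of the tuple: cells $u\in\lambda^{(i)}$, $v\in\lambda^{(j)}$ with $i<j$ attack only when $c(u)=c(v)$ or $c(u)=c(v)+1$. Since every content of $R$ is strictly less than every content of $R'$, the tuple $(R,R')$ has \emph{no} attacking pair at all, so $\mathrm{inv}\equiv 0$ there; the unique attacking pair appears only in the order $(R',R)$, where the cells of contents $\ell(R')$ and $r(R)$ attack and form an inversion exactly when $x>y$. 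Your claim that $(R,R')$ carries an inversion when $y>x$, and that after swapping the rows the configuration $x\le y$ becomes ``inverted,'' are both false, and they are what led you into the $0+1\neq 0+0$ and $1+1=2$ dead ends. Moreover, the escape route you sketch — sending the $x>y$ pairs to the single row ``via a sort'' — is not available: condition \eqref{eq:contentpreserving} forces the entry at each content to be fixed, so no re-sorting across contents is permitted (and the unsorted concatenation is not a valid row when $x>y$).

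With the attack/inversion conventions set right, the matching is forced and is exactly the paper's: the $x>y$ tableaux of the $q(R,R')$ summand (inv $0$, plus the exponent $1$) map to the $x>y$ tableaux of the $(R',R)$ summand (inv $1$, exponent $0$); the $x\le y$ tableaux of $q(R,R')$ map to the single rows in the $q(R\cup R')$ summand on the right (both sides inv $0$, exponent $1$); and the left-hand $(R\cup R')$ summand maps by splitting at the content boundary to the $x\le y$ tableaux of $(R',R)$ (both sides inv $0$, exponent $0$) — splitting is legal precisely because a weakly increasing row forces $x\le y$. As written, your proposal stops short of this correct case analysis and commits to an incorrect one, so the proof is not complete.
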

\begin{proof} By Theorem \ref{thm:llteq}, it suffices to find an appropriate bijection \begin{equation}f:\text{SSYT}_{(R,R')}\sqcup\text{SSYT}_{(R\cup R')}\to\text{SSYT}_{(R\cup R')}\sqcup\text{SSYT}_{(R',R)}.\end{equation} For a tableau $\bm T\in\text{SSYT}_{(R,R')}$, let $x$ and $y$ denote the entries in the cells of content $r(R)$ and $\ell(R')$ respectively. We partition \begin{equation}\text{SSYT}_{(R,R')}=\text{SSYT}^\leq_{(R,R')}\sqcup\text{SSYT}^>_{(R,R')}\end{equation} depending on whether $x\leq y$ or $x>y$, and we similarly partition $\text{SSYT}_{(R',R)}$. We will now assemble our bijection $f$ as the union of the unique bijections \begin{align}f_1&:\text{SSYT}^>_{(R,R')}\to\text{SSYT}^>_{(R',R)},\\f_2&:\text{SSYT}^\leq_{(R,R')}\to\text{SSYT}_{(R\cup R')},\text{ and }\\f_3&:\text{SSYT}_{(R\cup R')}\to\text{SSYT}^\leq_{(R',R)}\end{align} satisfying \eqref{eq:contentpreserving}. By definition, every $\bm T\in\text{SSYT}^>_{(R',R)}$ has the unique inversion $(y,z)$, while all of the other tableaux have zero inversions, so our aggregate function $f$ satisfies \eqref{eq:inversionobeying}.
\end{proof}
\begin{example} These bijections are illustrated in an example below. We have written $q$'s to indicate how the numbers of inversions are supposed to change.
$$\begin{tabular}{ccccc}
$f_1:$ &\hspace{20pt} $q \ \tableau{&&y&z\\ \\ w&x}$ &\hspace{20pt} $\mapsto$ &\hspace{20pt} $\tableau{w&x\\ \\ &&y&z}$ &\hspace{20pt} $(x>y)$ \\ &&&& \\ \hline &&&& \\ $f_2:$ &\hspace{20pt} $q \ \tableau{&&y&z\\ \\ w&x}$ &\hspace{20pt} $\mapsto$ &\hspace{20pt} $q \ \tableau{w&x&y&z}$ &\hspace{20pt} $(x\leq y)$\\&&&& \\ \hline &&&&\\ $f_3:$ &\hspace{20pt} $\tableau{w&x&y&z}$ &\hspace{20pt} $\mapsto$ &\hspace{20pt} $\tableau{&&y&z\\ \\ w&x}$ &\hspace{20pt} $(x\leq y)$
\end{tabular}$$
\end{example}

Our next LLT-equivalence relation justifies the terminology of commuting rows. 
\begin{lemma} \label{lem:commuting} Let $R$ and $R'$ be rows such that $R\leftrightarrow R'$. We have the LLT-equivalence \begin{equation}(R,R')\cong(R',R).\end{equation} \end{lemma}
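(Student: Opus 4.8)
The plan is to apply Theorem~\ref{thm:llteq} directly, constructing a bijection $f:\text{SSYT}_{(R,R')}\to\text{SSYT}_{(R',R)}$ that preserves the multiset of entries in each content line and that preserves the number of inversions (so we take $a=b=0$ in the theorem). By Proposition~\ref{prop:mrirj}, the hypothesis $R\leftrightarrow R'$ with $\ell(R)\leq\ell(R')$ (we may assume this, swapping names if necessary) means we are in case (1) or case (2): either $r(R)<\ell(R')-1$, so $R$ and $R'$ do not attack, or $\ell(R')=\ell(R)$ or $r(R')\leq r(R)$, so one row's content set contains the other's. I would handle these two situations separately.

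In the non-attacking case $r(R)<\ell(R')-1$, there are no inversions between the two rows for \emph{any} filling, and no content line meets both rows, so the identity map on underlying fillings works: given $\bm T=(T,T')$, set $f(\bm T)=(T',T)$. Condition~\eqref{eq:inversionobeying} holds because $\text{inv}$ is unchanged (swapping two non-attacking rows changes nothing, by the same reasoning as Proposition~\ref{prop:lltfacts}, Part~2 applied locally), and condition~\eqref{eq:contentpreserving} is immediate since each content $c$ is hit by at most one of the two rows. In the nested case, say $c(R')\subseteq c(R)$ (the argument is symmetric if $c(R)\subseteq c(R')$, using that $\ell(R)\leq\ell(R')$ forces $\ell(R)=\ell(R')$ there), I would again try the naive swap $f(T,T')=(T',T)$; the content-preserving condition is automatic, so the work is entirely in checking that the inversion count is preserved. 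Here I expect to need a more careful local argument: a cell $u\in R$ and $v\in R'$ with $c(u)=c(v)$ contribute an inversion to $(R,R')$ exactly when $T(u)>T'(v)$, and to $(R',R)$ exactly when $T'(v)>T(u)$; and a pair with $c(u)=c(v)+1$ flips the other way. So the naive swap does \emph{not} obviously preserve $\text{inv}$, and one must instead permute the entries within the rows.

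The main obstacle, then, is the nested case: I need to redistribute the entries of $T$ and $T'$ among the two rows of the new tuple $(R',R)$ so that (i) each piece is weakly increasing, (ii) each content line keeps its multiset of entries, and (iii) the total inversion count is unchanged. A natural approach is to work content line by content line from left to right: on each line $c$ that meets both rows, we have a pair of entries $\{a,b\}$ (one from each row), and we must decide which goes into the $R'$-cell and which into the $R$-cell of the output; the constraint is that the resulting rows are weakly increasing and that the number of flipped/unflipped attacking pairs matches. I would try sorting: within the output tuple $(R',R)$, place in the $R'$-row of content $c$ the \emph{smaller} (or larger, to be determined) of the two competing entries, and show by a careful bookkeeping over consecutive content lines that the increasing condition is maintained and that inversions gained on $c(u)=c(v)$ pairs are exactly offset by those lost on $c(u)=c(v)+1$ pairs. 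An alternative, possibly cleaner, route is to decompose the nested case further: since $R\leftrightarrow R'$ means $M(R,R')=\min\{|R|,|R'|\}$, the smaller row is ``content-covered'' by the larger, and one can iteratively peel off the rightmost column shared by both rows, reducing to a smaller instance — this suggests an induction on $|R|+|R'|$ where the base case is a single shared content line (an easy two-entry swap) and the inductive step uses Lemma~\ref{lem:llteq} or the $\kappa$-move to strip a column. I would pursue whichever of these makes the inversion bookkeeping most transparent; the content-preserving condition will be free in all approaches, so the entire difficulty is exhibiting an inversion-preserving bijection in the nested configuration.
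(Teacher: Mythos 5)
Your reduction matches the paper's frame: invoke Theorem~\ref{thm:llteq}, and use Proposition~\ref{prop:mrirj} to split into the non-attacking case $M(R,R')=0$, where the unique content-preserving map works, and the nested case $M(R,R')=\min\{|R|,|R'|\}$. The non-attacking half is fine. But the nested case is the entire content of the lemma, and you have not produced the bijection there; moreover, the main strategy you sketch would fail as stated. A fixed sorting rule (always put the smaller, or always the larger, of the two entries of a shared content into the $R'$-cell) does not preserve inversions. Take $c(R)=\{0,1\}$ and $c(R')=\{0\}$, writing $a_0,a_1$ for the entries of $R$ and $b_0$ for the entry of $R'$. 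With $a_0=1$, $a_1=3$, $b_0=2$, the tuple $(R,R')$ has no inversions, but putting the larger entry into $R'$ creates one in $(R',R)$; with $a_0=a_1=2$, $b_0=1$, the tuple $(R,R')$ has one inversion, but putting the smaller entry into $R'$ gives none. So the assignment must depend on the filling, and determining it is precisely the nontrivial step you deferred. Your alternative route, peeling off a shared column by induction using Lemma~\ref{lem:llteq} or the $\kappa$-move, is also not obviously viable: Lemma~\ref{lem:llteq} concerns adjacent disjoint rows with $\ell(R')=r(R)+1$, and deleting a column changes the shapes rather than yielding an LLT-equivalence.

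For comparison, the paper's bijection in the nested case (say $M(R,R')=|R'|$, with $u_c,v_c$ the cells of content $c$ in $R,R'$) is adaptive: if $\bm T(v_i)<\bm T(u_{i-1})$ for all $i\in c(R')$ or $\bm T(v_j)>\bm T(u_{j+1})$ for all $j\in c(R')$, every entry stays in its cell and both tuples have exactly $|R'|$ inversions; otherwise one takes $i$ minimal with $\bm T(v_i)\geq\bm T(u_{i-1})$ and $j$ maximal with $\bm T(v_j)\leq\bm T(u_{j+1})$, swaps the two entries of content $c$ between the rows for every $i\leq c\leq j$, and fixes the entries outside $[i,j]$; the inversion count is then checked pairwise (the four boundary pairs are non-inversions, the pairs outside $[i,j]$ are forced inversions on both sides, the pairs inside are unchanged), and $i,j$ are recoverable from the image, giving invertibility. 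As it stands, your proposal identifies the correct framework and the correct difficulty, but leaves the essential inversion-preserving bijection unconstructed, so the proof is incomplete.
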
 
\begin{remark} Lemma \ref{lem:commuting} tells us that if $\bm\lambda=(R_1,\ldots,R_n)$ and $R_i\leftrightarrow R_{i+1}$, then we can switch rows $R_i$ and $R_{i+1}$ so that $G_{\bm\lambda}(\bm x;q)=G_{\bm\mu}(\bm x;q)$, where $\bm\mu=(R_1,\ldots,R_{i+1},R_i,\ldots,R_n)$. Also note that Theorem \ref{thm:maxinv} implies that if $(R,R')\cong(R',R)$, then $R\leftrightarrow R'$. \end{remark}

\begin{proof}
By Theorem \ref{thm:llteq}, it suffices to find an appropriate bijection \begin{equation}f:\text{SSYT}_{(R,R')}\to\text{SSYT}_{(R',R)}.\end{equation} By Proposition \ref{prop:mrirj}, we have $M(R,R')=0$ or $\min\{|R|,|R'|\}$. If $M(R,R')=0$, then the unique bijection $f$ satisfying \eqref{eq:contentpreserving} trivially satisfies \eqref{eq:inversionobeying}, so suppose without loss of generality that $M(R,R')=|R'|$. Denote by $u_c$ and $v_c$ the cells in $R$ and $R'$ of content $c$. Let $\bm T\in\text{SSYT}_{(R,R')}$. First suppose that either $\bm T(v_i)<\bm T(u_{i-1})$ for all $i\in c(R')$ or $\bm T(v_j)>\bm T(u_{j+1})$ for all $j\in c(R')$, where by convention $\bm T(u_i)=0$ if $i<\ell(R)$ and $\bm T(u_i)=\infty$ if $i>r(R)$. Define $\bm U=f(\bm T)\in\text{SSYT}_{(R',R)}$ by $\bm U(u_c)=\bm T(u_c)$ and $\bm U(v_c)=\bm T(v_c)$. Then $f$ satisfies \eqref{eq:inversionobeying} because $\text{inv}(\bm U)=\text{inv}(\bm T)=|R'|$ and $f$ satisfies \eqref{eq:contentpreserving}, so we are done.\\

Now suppose otherwise and let $i\in c(R')$ be minimal with $\bm T(v_i)\geq \bm T(u_{i-1})$  and $j\in c(R')$ be maximal with $\bm T(v_j)\leq\bm T(u_{j+1})$. 
Define $\bm U=f(\bm T)\in\text{SSYT}_{(R',R)}$ by $\bm U(u_c)=\bm T(u_c)$ and $\bm U(v_c)=\bm T(v_c)$ if $c< i$ or $c> j$, and $\bm U(u_c)=\bm T(v_c)$ and $\bm U(v_c)=\bm T(u_c)$ if $i\leq c$ and $c\leq j$. An example is given below, where we have marked the entries of content $i$ and $j$ in red. Informally, the entries between the red ones are fixed and the entries outside are switched.
$$\bm T=\tableau{&&&1&1&\textcolor{red}5&5&\textcolor{red}6&8&9\\ \\1&2&3&3&4&\textcolor{red}4&6&\textcolor{red}7&7&7&8&8&9}\mapsto\bm U=\tableau{1&2&3&3&4&\textcolor{red}5&5&\textcolor{red}6&7&7&8&8&9\\ \\&&&1&1&\textcolor{red}4&6&\textcolor{red}7&8&9}$$
By construction, $\bm U\in\text{SSYT}_{(R',R)}$ and $f$ satisfies \eqref{eq:contentpreserving}. By minimality of $i$, $\bm T$ has inversions $(\bm T(u_c),\bm T(v_c))$ for $\ell(R')\leq c< i$ and by maximality of $j$, $\bm T$ has inversions $(\bm T(v_c),\bm T(u_{c+1}))$ for $j<c\leq r(R)$. Similarly, $\bm U$ has inversions $(\bm U(u_{c-1}), \bm U(v_c))$ for $\ell(R')\leq c< i$ and $(\bm U(v_c),\bm U(u_c))$ for $j< c\leq r(R')$. The four pairs $$(\bm T(v_{i-1}),\bm T(u_i)), \ (\bm T(v_j),\bm T(u_{j+1})), \ (\bm U(u_{i-1}),\bm U(v_i)),\text{ and }(\bm U(u_j),\bm U(v_{j+1}))$$ are not inversions, and otherwise if $i\leq c$ and $c\leq j$ the cells of content $c$ are unchanged so any inversions are preserved. Therefore, $f$ also satisfies \eqref{eq:inversionobeying}. We can recover the contents $i$ and $j$ from $\bm U$ by noting that $i\in c(R')$ is minimal with $\bm U(v_i)\geq \bm U(u_{i-1})$ and $j\in c(R')$ is maximal with $\bm U(v_j)\leq \bm U(u_{j+1})$, so $f$ is invertible. \end{proof}

By combining Lemma \ref{lem:llteq} and Lemma \ref{lem:commuting}, we obtain the following recurrence relation.

\begin{lemma} \label{lem:inductiverelation} Let $R$ and $R'$ be rows such that $R\nleftrightarrow R'$ and $\ell(R')<\ell(R)$. We have the LLT-equivalence \begin{equation}\label{eq:inductiverelation}(R,R')\cong q(R',R)+(1-q)(R\cup R',R\cap R').\end{equation}
\end{lemma}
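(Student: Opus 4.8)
First I would pin down the geometry of the two rows. Since $\ell(R')<\ell(R)$ and $R\nleftrightarrow R'$, applying Proposition \ref{prop:mrirj} with the two rows in the opposite order rules out its first two cases, so we must be in the third: $\ell(R')<\ell(R)\le r(R')+1\le r(R)$. Consequently $R\cup R'$ is the row with content $\{\ell(R'),\ldots,r(R)\}$ and $R\cap R'$ is the (possibly empty) row with content $\{\ell(R),\ldots,r(R')\}$. Set $P=R\cap R'$ and let $B$ be the row with content $\{\ell(R'),\ldots,\ell(R)-1\}$; note $B$ is nonempty. The point of this decomposition is that $R'=B\cup P$ with $B$ immediately to the left of $P$, that $R\cup R'=B\cup R$ with $B$ immediately to the left of $R$, and that $P\subseteq R$ and $P\subseteq R\cup R'$, so by Proposition \ref{prop:mrirj} we have $P\leftrightarrow R$ and $P\leftrightarrow R\cup R'$: all of the non-commuting behaviour has been isolated into pairs involving $B$.

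Next I would record the form of Lemma \ref{lem:llteq} that I will iterate: solving the stated LLT-equivalence for its last term shows that whenever a row $S$ lies immediately to the left of a row $S'$ (that is, $\ell(S')=r(S)+1$),
\begin{equation}\label{eq:starform}(S',S)\cong q(S,S')+(1-q)(S\cup S').\end{equation}
If $P=\emptyset$, then $R'$ and $R$ are immediately adjacent with $R'$ on the left, the empty shape $R\cap R'$ contributes a trivial factor so that $(R\cup R',R\cap R')\cong(R\cup R')$, and \eqref{eq:inductiverelation} is exactly \eqref{eq:starform} applied with $S=R'$ and $S'=R$. So I would assume $P\neq\emptyset$ for the rest of the argument.

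For the main case I would run a short chain of equivalences in the $\mathbb Q(q)$-vector space of formal combinations of multiskew partitions, using \eqref{eq:starform} on the adjacent pairs $(B,P)$ and $(B,R)$ and using Lemma \ref{lem:commuting} on the commuting pair $(P,R)$. Applying \eqref{eq:starform} to $(B,P)$ and solving for the single row $R'=B\cup P$, then concatenating $R$ in front, expresses $(R,R')$ as a combination of the three-row terms $(R,P,B)$ and $(R,B,P)$ with coefficients $\tfrac{1}{1-q}$ and $-\tfrac{q}{1-q}$. In $(R,P,B)$ I would use $P\leftrightarrow R$ to reach $(P,R,B)$ and then apply \eqref{eq:starform} to the suffix $(R,B)$; in $(R,B,P)$ I would apply \eqref{eq:starform} to the prefix $(R,B)$; together this produces the terms $(P,B,R)$, $(B,R,P)$, $(P,R\cup R')$, and $(R\cup R',P)$. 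Using $P\leftrightarrow R\cup R'$ the last two combine into $(1-q)(R\cup R',R\cap R')$, and one further application of \eqref{eq:starform} to $(B,P)$ inside $(P,B,R)$, followed by $P\leftrightarrow R$ once more, collapses the remaining $(P,B,R)$ and $(B,R,P)$ contributions into $q(R',R)$. Summing gives \eqref{eq:inductiverelation}.

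I expect the only genuinely non-mechanical step to be the very first one: recognizing that one should split $R'$ as $B\cup(R\cap R')$ rather than, say, splitting $R$, since it is precisely this choice that lets $R\cap R'$ slide freely past both $R$ and $R\cup R'$ and thereby reduces the identity to bookkeeping. Everything afterward is routine, although one must track the intermediate coefficients $\tfrac{1}{1-q}$ carefully over $\mathbb Q(q)$; alternatively, to stay within $\mathbb N[q]$ one could rewrite the claim as $(R,R')+q(R\cup R',R\cap R')\cong q(R',R)+(R\cup R',R\cap R')$ and supply a single inversion- and content-preserving bijection as in Theorem \ref{thm:llteq}, in the style of the proofs of Lemmas \ref{lem:llteq} and \ref{lem:commuting}.
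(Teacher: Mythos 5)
Your proposal is correct and follows essentially the same route as the paper: you split $R'$ into $B=R'\setminus R$ and $P=R\cap R'$ (the paper's $R_1,R_2$), isolate the non-commuting behaviour in $B$, and run a $\mathbb Q(q)$-linear chain using Lemma \ref{lem:llteq} on adjacent pairs and Lemma \ref{lem:commuting} on the commuting pairs $P\leftrightarrow R$ and $P\leftrightarrow R\cup R'$, exactly as in the paper's proof (your coefficients check out). The only difference is cosmetic bookkeeping in the order of the steps, plus your explicit handling of the $P=\emptyset$ case, which the paper leaves implicit.
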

\begin{proof} Note that by Proposition \ref{prop:mrirj}, we must have that $\ell(R')<\ell(R)\leq r(R')+1\leq r(R)$. Let $R_1=R'\setminus R$ and $R_2=R\cap R'$, so that $R'=R_1\cup R_2$, $R\cup R_1=R\cup R'$, and $R\leftrightarrow R_2$. Now by Lemma \ref{lem:llteq} and Lemma \ref{lem:commuting}, we have
\begin{align}(R,R')&\cong\frac q{q-1}(R,R_1,R_2)-\frac 1{q-1}(R,R_2,R_1)\\\nonumber&\cong\frac q{q-1}(q(R_1,R,R_2)+(1-q)(R_1\cup R,R_2))-\frac 1{q-1}(R_2,R,R_1)\\\nonumber&\cong\frac{q^2}{q-1}(R_1,R_2,R)-q(R_1\cup R,R_2)-\frac 1{q-1}(q(R_2,R_1,R)+(1-q)(R_2,R_1\cup R))\\\nonumber&\cong\frac{q^2}{q-1}(R_1,R_2,R)-q(R_1\cup R,R_2)-\frac q{q-1}(R_2,R_1,R)+(R_1\cup R,R_2)\\\nonumber&\cong\frac{q^2}{q-1}(R_1,R_2,R)+(1-q)(R_1\cup R,R_2)-\frac q{q-1}(q(R_1,R_2,R)+(1-q)(R',R))\\\nonumber&= q(R',R)+(1-q)(R\cup R',R\cap R').
\end{align}
\end{proof}

The following Proposition will help us visualize the relation \eqref{eq:inductiverelation}. 

\begin{proposition} \label{prop:munin} Let $R_1$, $R_2$, and $R$ be rows such that $R_1\nleftrightarrow R_2$ and $\ell(R_2)<\ell(R_1)$ and let $M=M(R_1,R_2)$, $M_1=M(R_1,R)$, and $M_2=M(R_2,R)$. Then \begin{align}M(R_1\cap R_2,R)&=\min\{M-1,M_1,M_2\}\text{ and }\\M(R_1\cup R_2,R)&=\min\{|R|,\max\{M_1,M_2,M_1+M_2-(M-1)\}\}.\end{align} In particular, if $M_2=0$, then $M(R_1\cap R_2,R)=0$ and $M(R_1\cup R_2,R)=M_1$. 
\end{proposition}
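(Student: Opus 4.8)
The plan is to reduce everything to the explicit formulas of Proposition \ref{prop:mrirj}. Since $R_1 \nleftrightarrow R_2$ and $\ell(R_2) < \ell(R_1)$, Proposition \ref{prop:mrirj} (Part 3, applied with $R_2$ playing the role of the row with smaller $\ell$) gives $\ell(R_2) < \ell(R_1) \leq r(R_2)+1 \leq r(R_1)$, and $M = M(R_1,R_2) = r(R_2) - \ell(R_1) + 2$. In particular $R_1 \cap R_2$ is the row with $\ell(R_1 \cap R_2) = \ell(R_1)$ and $r(R_1 \cap R_2) = r(R_2)$, so $|R_1 \cap R_2| = M-1$, while $R_1 \cup R_2$ is the row with $\ell(R_1 \cup R_2) = \ell(R_2)$ and $r(R_1 \cup R_2) = r(R_1)$. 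I would record these four endpoint values at the outset so that the remaining computation is purely about intersecting intervals of contents.

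Next I would compute $M(R_1 \cap R_2, R)$. Write $N = R_1 \cap R_2$, so $c(N) = \{\ell(R_1), \ldots, r(R_2)\}$, a subinterval of both $c(R_1)$ and $c(R_2)$. The quantity $M(N,R)$ is, by Definition \ref{def:wgraph}, either $|c(N) \cap c(R)|$ or $|c(N) \cap c(R^+)|$ depending on whether $\ell(N) \leq \ell(R)$ or not; in either case it is the size of an intersection of a subinterval of $c(R_1)$ (and of $c(R_2)$) with $c(R)$ or $c(R^+)$. The point is that whichever of $c(R)$, $c(R^+)$ is relevant for $N$ is governed by $\ell(N) = \ell(R_1)$, hence is the same interval that appears in the definition of $M(R_1, R)$; but one must also check it is the interval appearing for $M(R_2,R)$ (here $\ell(N) = \ell(R_1) > \ell(R_2)$ may cause a mismatch). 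I expect this bookkeeping — matching which shifted/unshifted content interval to use, and confirming that $c(N) \cap (\text{relevant interval of } R)$ has size exactly $\min\{|N|, M_1, M_2\} = \min\{M-1, M_1, M_2\}$ because $c(N)$ is sandwiched between the intervals computing $M_1$ and $M_2$ — to be the main obstacle, since it requires a careful case split on the positions of $\ell(R), r(R)$ relative to $\ell(R_1), r(R_2)$. A clean way to organize it: since $N \subseteq R_1$ and $N \subseteq R_2$ as sets of cells (after the appropriate $+$-shift dictated by $\ell$), the inversions $N$ can make with $R$ are a subset of those $R_1$ can, and of those $R_2$ can, giving $M(N,R) \leq \min\{M_1, M_2\}$ and trivially $\leq |N| = M-1$; for the reverse inequality one checks that all $\min\{M-1, M_1, M_2\}$ of the relevant contents actually lie in $c(N)$, using that $c(N)$ is an interval abutting the right end of $c(R_2)$ and the left-ish end of $c(R_1)$.

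For $M(R_1 \cup R_2, R)$ I would argue dually. Let $U = R_1 \cup R_2$, with $c(U) = \{\ell(R_2), \ldots, r(R_1)\} = c(R_1) \cup c(R_2)$ (an interval, since the two overlap). Then $c(U) \cap c(R)$ (or $c(R^+)$) contains both $c(R_1) \cap c(R)$ and $c(R_2) \cap c(R)$, and the overlap of these two within $c(U)$ is controlled by $c(R_1) \cap c(R_2) = c(N)$, which has size $M - 1$; by inclusion-exclusion on intervals one gets $|c(U) \cap c(R)| = |c(R_1) \cap c(R)| + |c(R_2) \cap c(R)| - |\text{(their common part)}|$, and the common part has size at most $\min\{M_1, M_2, M-1\}$ but could be smaller if $c(R)$ misses the middle, yielding exactly $\max\{M_1, M_2, M_1 + M_2 - (M-1)\}$ after taking the three cases (one interval contained in the other's contribution, versus genuine overlap of size $M-1$). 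Capping by $|R|$ accounts for the trivial bound $M(U,R) \leq |R|$ from Definition \ref{def:wgraph}. Finally, the special case $M_2 = 0$ is immediate: $M(N,R) \leq M_2 = 0$ and $\max\{M_1, 0, M_1 - (M-1)\} = M_1$ since $0 \leq M_1$, and $M_1 \leq |R|$ automatically, so $M(U,R) = M_1$. Throughout I would lean on the observation (implicit in the proof of Theorem \ref{thm:maxinv}) that $M(R,R')$ counts the cells in the shorter of two aligned content-runs, which makes all of these into one-dimensional interval intersection counts.
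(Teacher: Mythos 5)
Your overall route is the same as the paper's: use Proposition \ref{prop:mrirj} to pin down $\ell(R_2)<\ell(R_1)\leq r(R_2)+1\leq r(R_1)$ and $|R_1\cap R_2|=M-1$, and then compute everything as intersections of content intervals via a case analysis on where $\ell(R)$ and $r(R)$ fall. Your setup, the containment bound $M(R_1\cap R_2,R)\leq\min\{M-1,M_1,M_2\}$, and the $M_2=0$ special case are fine. The issue is that the entire content of the proposition lives in the case analysis you defer, and in the one place where you do commit to a mechanism --- the inclusion--exclusion argument for $M(R_1\cup R_2,R)$ --- the mechanism is not right as described. The delicate point is that each of $M_1$, $M_2$, $M(R_1\cap R_2,R)$, $M(R_1\cup R_2,R)$ is measured against either $c(R)$ or $c(R^+)$ according to a \emph{separate} comparison of left endpoints, and these choices need not agree; your ``subset of inversions'' and inclusion--exclusion steps tacitly assume they do. (Even the containment bound needs a word for this reason: passing from $R_2$ to $R_1\cap R_2$ can switch the comparison interval from $c(R)$ to $c(R^+)$, so it is not literally a matter of one set of attacking pairs sitting inside another, though it is easily patched.)

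Concretely, the outer $\min\{|R|,\cdot\}$ is not a trivial safety cap, and deviations from $M_1+M_2-(M-1)$ are not caused by the triple overlap being smaller than $\min\{M-1,M_1,M_2\}$. Take $c(R_2)=\{0,\ldots,4\}$, $c(R_1)=\{3,\ldots,7\}$, $c(R)=\{1,\ldots,6\}$. Then $M=3$, $M_1=|c(R_1)\cap c(R^+)|=5$, $M_2=4$, and $M(R_1\cup R_2,R)=|R|=6$, whereas $M_1+M_2-(M-1)=7$; in the unshifted inclusion--exclusion the common part is exactly $M-1=2$, and the shortfall comes from $|c(R_1)\cap c(R)|=M_1-1$, i.e.\ from the shift in how $M_1$ was measured. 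This is exactly the paper's final sub-case ($\ell(R_2)\leq\ell(R)\leq\ell(R_1)-1\leq r(R_2)\leq r(R)<r(R_1)$), where $M(R_1\cup R_2,R)=|R|=(M_1-1)+M_2-(M-1)$, so the cap is attained. Until you carry out the full split that the paper does ($\ell(R)\geq\ell(R_1)$; $r(R)<r(R_2)-1$; otherwise sub-split on $\ell(R)<\ell(R_2)$ or not and on $r(R)\geq r(R_1)$ or not), including the matching exact evaluation (not just the upper bound) of $M(R_1\cap R_2,R)$ in each case, what you have is a plan rather than a proof.
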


\begin{proof} Recall that by Proposition \ref{prop:mrirj}, we must have $\ell(R_1)-1\leq r(R_2)$. We now consider several cases. If $\ell(R)\geq\ell(R_1)$, then \begin{equation}M(R_1\cap R_2,R)=M(R_2,R)=M_2\leq M-1\leq M(R_1\cup R_2,R)=M(R_1,R)=M_1.\end{equation} Similarly, if $r(R)<r(R_2)-1$, then \begin{equation}M(R_1\cap R_2,R)=M(R_1,R)=M_1\leq M-1\leq M(R_1\cup R_2,R)=M(R_2,R)=M_2.\end{equation} Now suppose that $\ell(R)\leq\ell(R_1)-1\leq r(R_2)\leq r(R)$, so that $R_1\cap R_2\subseteq R$ and $M(R_1\cap R_2,R)=|R_1\cap R_2|=M-1\leq M_1,M_2$. If $\ell(R)<\ell(R_2)$, then \begin{equation}M(R_1\cup R_2,R)=|R_1\cap R^+|+|R_2\cap R^+|-|R_1\cap R_2\cap R^+|=M_1+M_2-(M-1)\leq |R|.\end{equation} Similarly, if $\ell(R)\geq\ell(R_2)$, then \begin{equation}M(R_1\cup R_2,R)=|R_1\cap R|+|R_2\cap R|-|R_1\cap R_2\cap R|=|R_1\cap R|+M_2-(M-1).\end{equation} At this point, if $r(R)\geq r(R_1)$, then $|R_1\cap R|=M_1$ and again \begin{equation}M(R_1\cup R_2,R)=M_1+M_2-(M-1)\leq |R|,\end{equation} while if $r(R)<r(R_1)$, then $|R_1\cap R|=M_1-1$ and $R\subseteq R_1\cup R_2$, so \begin{equation}M(R_1\cup R_2,R)=|R|=(M_1-1)+M_2-(M-1)<M_1+M_2-(M-1).\end{equation}
\end{proof}

\begin{remark} \label{rem:changegraph} We can now visualize the relation \eqref{eq:inductiverelation} as below, where $c=a+b-(M-1)$, $N_1=\min\{M-1,M_1,M_2\}$, and $N_2=\min\{x,\max\{M_1,M_2,M_1+M_2-(M-1)\}\}$.\\ 

\begin{tikzpicture}
\node[shape=circle,draw=black,minimum size=10mm](1) at (0,0) {$a$};
\node[shape=circle,draw=black,minimum size=10mm](2) at (3,0) {$b$};
\node[shape=circle,draw=black,minimum size=10mm](3) at (1.5,-2.6) {$x$};
\draw (0,0.75) node {$v_i$} (3,0.75) node {$v_j$} (1.5,-3.35) node {$v_k$};
\path [-](1) edge node [above]{$M$} (2);
\path [-](1) edge node [below left]{$M_1$} (3);
\path [-](2) edge node [below right]{$M_2$} (3);
\draw (4.5,-1.3) node {$= \ q$};
\node[shape=circle,draw=black,minimum size=10mm](1) at (6,0) {$a$};
\node[shape=circle,draw=black,minimum size=10mm](2) at (9,0) {$b$};
\node[shape=circle,draw=black,minimum size=10mm](3) at (7.5,-2.6) {$x$};
\draw (6,0.75) node {$v_i$} (9,0.75) node {$v_j$} (7.5,-3.35) node {$v_k$};
\path [-](1) edge node [above]{$M-1$} (2);
\path [-](1) edge node [below left]{$M_1$} (3);
\path [-](2) edge node [below right]{$M_2$} (3);
\draw (10.5,-1.3) node {$+ \ (1-q)$};
\node[shape=circle,draw=black,minimum size=10mm](1) at (12,0) {\tiny{M-1}};
\node[shape=circle,draw=black,minimum size=10mm](2) at (15,0) {$c$};
\node[shape=circle,draw=black,minimum size=10mm](3) at (13.5,-2.6) {$x$};
\draw (12,0.75) node {$v_i$} (15,0.75) node {$v_j$} (13.5,-3.35) node {$v_k$};
\path [-](1) edge node [above]{$M-1$} (2);
\path [-](1) edge node [below left]{$N_1$} (3);
\path [-](2) edge node [below right]{$N_2$} (3);
\end{tikzpicture}

\end{remark}

We conclude this section by characterizing the triangle-free weighted graphs $\Pi(\bm\lambda)$ that can arise from a horizontal-strip $\bm\lambda$.

\begin{definition} A graph $G$ is a \emph{caterpillar} if it is a tree and its vertices can be partitioned $V=P\sqcup L$ so that the induced subgraph $G[P]$ is a path and every $v\in L$ has degree one. \end{definition}

\begin{proposition} \label{prop:caterpillar} Let $\bm\lambda$ be a horizontal-strip such that $\Pi=\Pi(\bm\lambda)$ is triangle-free. Recall that for a vertex $v\in \Pi$, we denote by $|v|$ the size of the corresponding row of $\bm\lambda$. 
\begin{enumerate} 
\item If $i<j<k$ and $v_i$ is adjacent to $v_k$, then $M_{j,k}=|v_j|$. 
\item Every vertex $v_i$ is adjacent to at most one vertex $v_j$ for which $i<j$.
\item Every connected component of $\Pi$ is a caterpillar $C=(P\sqcup L,E)$ and if $v_j\in L$ is adjacent to $v_k$, then $M_{j,k}=|v_j|$. 
\item If $v_i$ is adjacent to the vertices $\{v_{j_t}\}_{t=1}^r$, then \begin{equation}|v_i|+1\geq\sum_{t=1}^r M_{i,j_t}.\end{equation}
\item Let $\Pi'$ be a graph whose vertices $\{v_1,\ldots,v_n\}$ have positive integer weights $|v_i|$ and whose edges $(v_i,v_j)$ have positive integer weights $M_{i,j}\leq\min\{|v_i|,|v_j|\}$. If $\Pi'$ satisfies the above four conditions, then $\Pi'=\Pi(\bm\mu)$ for some horizontal-strip $\bm\mu$. 
\end{enumerate}
\end{proposition}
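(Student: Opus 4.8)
The plan is to prove the five parts roughly in order, using the explicit formulas of Proposition \ref{prop:mrirj} throughout, since every quantity $M_{i,j}$ is determined by the left and right endpoints $\ell,r$ of the two rows. For Part (1), I would argue by contradiction: if $v_i$ is adjacent to $v_k$ with $i<j<k$ but $M_{j,k}<|v_j|$, then by Proposition \ref{prop:mrirj} the rows $R_{j'}$ and $R_{k'}$ are not nested and not disjoint, hence overlap ``genuinely''; combining this with the fact that $R_{i'}$ attacks $R_{k'}$ and the content-reading-order labelling (which forces the rightmost cells of $R_{i'},R_{j'},R_{k'}$ to have nondecreasing contents), I would show the intervals $c(R_{i'}),c(R_{j'}),c(R_{k'})$ pairwise overlap, producing a triangle in $\Pi$ — contradiction. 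The key bookkeeping point is translating ``$i<j$ in content reading order'' into a statement about $r(R_{i'})\le r(R_{j'})$ (with ties broken by vertical position), and noting that two rightmost cells attack iff the underlying rows attack.

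For Part (2), suppose $v_i$ is adjacent to both $v_j$ and $v_k$ with $i<j<k$; then $R_{i'}$ attacks both $R_{j'}$ and $R_{k'}$, and I claim $R_{j'}$ attacks $R_{k'}$ as well, again because all three rightmost cells are forced into a short content window by the attacking relations and the labelling order, giving the forbidden triangle. Part (3) is then mostly a graph-theoretic consequence: a triangle-free connected graph in which every vertex has at most one ``forward'' neighbour (Part (2)) has at most one cycle, and being triangle-free plus this degree restriction forces it to be a caterpillar — I would make the spine $P$ consist of the vertices of degree $\ge 2$ and check directly that the remaining vertices are leaves; the edge-weight claim $M_{j,k}=|v_j|$ for a leaf $v_j$ adjacent to $v_k$ follows from Part (1) when the third vertex lies on the appropriate side, and from Proposition \ref{prop:mrirj}(2) directly when $v_j$ is an endpoint leaf (its row is nested in or disjoint-far from everything it is not adjacent to, forcing the ``$\min$'' case). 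Part (4) is a counting statement: if $v_i$ has neighbours $v_{j_1},\dots,v_{j_r}$, the edge weights $M_{i,j_t}$ count, inside the interval $c(R_{i'})$ (or its shift $c(R_{i'})^+$), disjoint ``active'' subintervals coming from the distinct neighbouring rows — disjoint because two neighbours of $v_i$ never attack each other (triangle-freeness), so their contributions to $c(R_{i'})$ are essentially disjoint, and an interval of $|v_i|$ cells can host total overlap at most $|v_i|+1$ once the off-by-one from the two cases of $M$ is accounted for.

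The genuinely substantial part is Part (5), the converse construction. The plan is: given $\Pi'$ satisfying (1)--(4), build a horizontal-strip $\bm\mu$ componentwise (components of $\Pi'$ are independent by Proposition \ref{prop:lltfacts}(2) and the fact that non-attacking rows give $M=0$), and within a component, which by (1)--(3) is a caterpillar with spine $v_{p_1},\dots,v_{p_m}$, lay the rows out along the content line from left to right: place the spine rows in a ``fan'' so that consecutive spine rows $R_{p_t},R_{p_{t+1}}$ overlap in exactly the prescribed amount (using Proposition \ref{prop:mrirj}(3), which gives me a one-parameter family of choices of $\ell,r$ realizing any prescribed pair $(M(R,R'),M(R',R))$ with $M(R',R)=M(R,R')+1$), then attach each leaf $v_j$, with edge weight $M_{j,k}=|v_j|$ by (3), as a row nested inside its spine neighbour $R_k$ at the correct horizontal offset so that it attacks only $R_k$. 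Condition (4) is exactly what guarantees there is enough room: the total overlap demanded at any spine vertex $v_i$ is $\le |v_i|+1$, which is the capacity computed in Part (4), so the offsets can be chosen without forcing an unwanted attack (which would create an extra edge) and without running off the ends of the row. I would then verify, using Proposition \ref{prop:mrirj} case by case, that the resulting $\Pi(\bm\mu)$ has exactly the vertices, weights, edges, and edge weights of $\Pi'$, and that the content-reading-order labelling of $\bm\mu$ reproduces the given indexing (or at least an admissible one). The main obstacle I anticipate is precisely this layout bookkeeping for Part (5): choosing all the endpoints simultaneously so that *only* the prescribed pairs of rows attack — ruling out accidental attacks between two leaves of the same spine vertex, or between a leaf and a non-neighbouring spine vertex — and I expect that a careful induction along the spine, maintaining as invariant ``the rows placed so far attack exactly according to the induced subgraph,'' together with the slack guaranteed by (4), is the cleanest way to manage it.
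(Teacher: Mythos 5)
Your overall route is essentially the paper's (content-reading order plus the case analysis of Proposition \ref{prop:mrirj} for Parts (1)--(2), a capacity count over pairwise non-attacking neighbours for Part (4), and a spine-plus-nested-leaves layout for Part (5)), but there is a genuine gap in your Part (3). You claim that triangle-freeness together with Part (2) (each vertex adjacent to at most one higher-indexed vertex) already forces each component to be a caterpillar; that purely graph-theoretic implication is false. Take the tree on $v_1,\ldots,v_7$ with edges $(v_1,v_2),(v_3,v_4),(v_5,v_6)$ and $(v_2,v_7),(v_4,v_7),(v_6,v_7)$: it is connected, triangle-free, and every vertex has at most one higher-indexed neighbour, yet deleting its leaves yields $K_{1,3}$, so it is not a caterpillar. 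What excludes such graphs is Part (1), which is strictly stronger than the degree condition (indeed the example violates it: $v_2$ is adjacent to $v_7$ but $v_3$ is not). The paper's proof of Part (3) uses Part (1) in exactly this way: it forces the vertex set of a component to be an interval of indices $\{v_i:\ i_1\le i\le i_r\}$ and forces every $v_j$ with $i_t<j<i_{t+1}$ to be adjacent to $v_{i_{t+1}}$, which is what produces the caterpillar structure and simultaneously gives $M_{j,k}=|v_j|$ for the legs. Your Part (3) must invoke Part (1), not just Part (2).

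A smaller, repairable slip is in Part (1): from $M_{j,k}<|v_j|$ you cannot conclude that $R_{j'}$ and $R_{k'}$ are ``not nested and not disjoint.'' The cases $M_{j,k}=0$ (disjoint, no edge) and $R_{k'}\subsetneq R_{j'}$ (nested the other way, giving $M_{j,k}=|v_k|<|v_j|$) also satisfy $M_{j,k}<|v_j|$ and need separate treatment; each does lead to a contradiction with $v_i$ adjacent to $v_k$ and triangle-freeness, but your stated dichotomy skips them. The paper's direct argument avoids the case split: $i<j<k$ gives $r(R_{i'})\le r(R_{j'})\le r(R_{k'})$, adjacency of $v_i$ and $v_k$ then forces $r(R_{j'})\in c(R_{k'})$, hence $v_j$ adjacent to $v_k$; triangle-freeness forces $v_i$ not adjacent to $v_j$, whence $c(R_{j'})\subseteq c(R_{k'})$ and $M_{j,k}=|v_j|$ by Proposition \ref{prop:mrirj}. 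Parts (2), (4), and (5) of your proposal match the paper's argument in substance, with Part (5) organized as an induction along the spine rather than the paper's explicit endpoint formulas, which is fine provided the non-attacking bookkeeping you flag is carried out.
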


\begin{remark} In particular, Part 5 tells us that the property in Part 1 precisely characterizes the labellings of $\Pi$ that can arise from a horizontal-strip. \end{remark}

\begin{example} \label{ex:caterpillar} For the horizontal-strip $\bm\lambda$ below left, we have drawn the caterpillar $\Pi(\bm\lambda)$ below right. We have $P=\{v_1,v_4,v_6\}$ and note that $8+1\geq 3+2+2+2$ and $4+1\geq 2+3$.
$$
\begin{tikzpicture}
\node[shape=circle,draw=black,minimum size=10mm] (3) at (9,6) {6};
\draw (9,6.75) node {$v_1$};
\node[shape=circle,draw=black,minimum size=10mm] (6) at (12,6) {8};
\draw (12,6.75) node {$v_4$};
\node[shape=circle,draw=black,minimum size=10mm] (8) at (15,6) {4};
\draw (15,6.75) node {$v_6$};
\node[shape=circle,draw=black,minimum size=10mm] (4) at (11,4) {2};
\draw (11,3.25) node {$v_2$};
\node[shape=circle,draw=black,minimum size=10mm] (5) at (13,4) {2};
\draw (13,3.25) node {$v_3$};
\node[shape=circle,draw=black,minimum size=10mm] (7) at (15,4) {3};
\draw (15,3.25) node {$v_5$};
\path [-](3) edge node [above]{$3$} (6);
\path [-](6) edge node [above]{$2$} (8);
\path [-](4) edge node [above left]{$2$} (6);
\path [-](5) edge node [above right]{$2$} (6);
\path [-](7) edge node [right]{$3$} (8);
\draw (3.75,2.25) node {7} (4.25,2.25) node {8} (4.75,3.25) node {9} (5.25,3.25) node {10} (6.75,4.25) node {13} (7.25,4.25) node {14} (7.75,4.25) node {15} (5.75,5.25) node {11}
 (6.25,5.25) node {12} (6.75,5.25) node {13} (7.25,5.25) node {14}
 (2.25,6.25) node {4} (2.75,6.25) node {5} (3.25,6.25) node {6}
 (3.75,6.25) node {7} (4.25,6.25) node {8} (4.75,6.25) node {9}
 (5.25,6.25) node {10} (5.75,6.25) node {11} (0.25,7.25) node {0} (0.75,7.25) node {1} (1.25,7.25) node {2} (1.75,7.25) node {3}
 (2.25,7.25) node {4} (2.75,7.25) node {5};
\draw (3.5,2) -- (4.5,2)--(4.5,2.5)--(3.5,2.5)--(3.5,2) (4,2)--(4,2.5) (4.5,3)--(5.5,3)--(5.5,3.5)--(4.5,3.5)--(4.5,3) (5,3)--(5,3.5) (6.5,4)--(8,4)--(8,4.5)--(6.5,4.5)--(6.5,4) (7,4)--(7,4.5) (7.5,4)--(7.5,4.5) (5.5,5)--(7.5,5)--(7.5,5.5)--(5.5,5.5)--(5.5,5) (6,5)--(6,5.5) (6.5,5)--(6.5,5.5) (7,5)--(7,5.5) (2,6)--(6,6)--(6,6.5)--(2,6.5)--(2,6) (2.5,6)--(2.5,6.5) (3,6)--(3,6.5) (3.5,6)--(3.5,6.5) (4,6)--(4,6.5) (4.5,6)--(4.5,6.5) (5,6)--(5,6.5) (5.5,6)--(5.5,6.5) (0,7)--(3,7)--(3,7.5)--(0,7.5)--(0,7) (0.5,7)--(0.5,7.5) (1,7)--(1,7.5) (1.5,7)--(1.5,7.5) (2,7)--(2,7.5) (2.5,7)--(2.5,7.5);
\end{tikzpicture}$$
By Part 5, we could switch the labels of $v_5$ and $v_6$ and the resulting graph $\Pi'$ would still be of the form $\Pi(\bm\mu)$ for some horizontal-strip $\bm\mu$. In this case, we would have $P=\{v_1,v_4,v_5,v_6\}$. However, because of Part 1, we could not switch the labels of $v_1$ and $v_2$.
\end{example}

\begin{proof}[Proof of Proposition \ref{prop:caterpillar}]
Let $\bm\lambda=(R_1,\ldots,R_n)$. 
\begin{enumerate}
\item Let $R_{i'}$, $R_{j'}$, and $R_{k'}$ be the rows of $\bm\lambda$ corresponding to the vertices $v_i$, $v_j$, and $v_k$ respectively. By Proposition \ref{prop:lltfacts}, Part 3, we may assume without loss of generality that $i'=1$. Because $i<j<k$, we have $r(R_{i'})\leq r(R_{j'})\leq r(R_{k'})$, and if $v_i$ is adjacent to $v_k$, then $\ell(R_{k'})\leq r(R_{i'})$, so $r(R_{j'})\in c(R_{k'})$ and $v_j$ is adjacent to $v_k$. Now using that $\Pi$ is triangle-free, $v_i$ is not adjacent to $v_j$, so $r(R_{i'})<\ell(R_{j'})$, $R_{j'}\subseteq R_{k'}$, and $M_{j,k}=|v_j|$. \\

\item If $v_i$ is adjacent to $v_j$ and $v_k$ with $i<j$ and $i<k$, then $v_j$ and $v_k$ are adjacent by Part 1, creating a triangle in $\Pi$. \\

\item We first note that $\Pi$ must be acyclic because if the vertices $\{v_{i_t}\}_{t=1}^r$ with $i_1<\cdots<i_r$ form a cycle, then the vertex $v_{i_1}$ is adjacent to two vertices $v_{i_t}$ and $v_{i_{t'}}$ with $i_1<i_t,i_{t'}$, contradicting Part 2. Therefore, the connected components of $\Pi$ must be trees. Let $C=(V,E)$ be a connected component of $\Pi$ and note that by Part 1, we must have $V=\{v_i: \ i_1\leq i\leq i_r\}$ for some $i_1,i_r$. Because $C$ is connected, there must be a path $P=(v_{i_1},v_{i_2},\ldots,v_{i_{r-1}},v_{i_r})$ and it follows from Part 2 that $i_1<i_2<\cdots<i_{r-1}<i_r$. By Part 1, if $i_t<j<i_{t+1}$, then $v_j$ must be adjacent to $v_{i_{t+1}}$. Because $C$ is a tree, this accounts for all of the edges of $C$ so indeed $C$ is a caterpillar. Moreover, by Part 1, if $v_i\in L=C\setminus P$ is adjacent to $v_j$, then $M_{i,j}=|v_i|$.\\

\item Let $R_{i'}$ and $R_{j'_t}$ be the rows of $\bm\lambda$ corresponding to vertices $v_i$ and $v_{j_t}$ respectively, and again by Proposition \ref{prop:lltfacts}, Part 3, we may assume that $i'=1$. Because $\Pi$ is triangle-free, we have $M_{j_t,j_{t'}}=0$ for $t\neq t'$ so assuming without loss of gnerality that $j_1<\cdots<j_r$, we have $\ell(R_{j'_{t+1}})\geq r(R_{j'_t})+1$ for every $t$. If $\ell(R_{j'_t}),\ell(R_{j'_{t'}})<\ell(R_1)$ or if $r(R_{j'_t}),r(R_{j'_{t'}})\geq r(R_1)$, then $v_t$ and $v_{t'}$ are adjacent, so we must have $\ell(R_{j'_t})\geq\ell(R_1)$ for every $t\geq 2$ and $r(R_{j'_t})\leq r(R_1)-1$ for every $t\leq r-1$. Therefore
\begin{align}M_{i,j_1}&\leq r(R_{j'_1})-\ell(R_1)+2,\\M_{i,j_2}&=|R_{j'_2}|=r(R_{j'_2})-\ell(R_{j'_1})+1\leq r(R_{j'_2})-r(R_{j'_1}),\\\nonumber&\cdots\\M_{i,j_{r-1}}&=|R_{j'_{r-1}}|=r(R_{j'_{r-1}})-\ell(R_{j'_{r-2}})+1\leq r(R_{j'_{r-1}})-r(R_{j'_{r-2}}),\\M_{i,j_r}&\leq r(R_1)-\ell(R_{j'_r})+1\leq r(R_1)-r(R_{j'_{r-1}}),
\end{align} and summing we have \begin{equation}\sum_{t=1}^rM_{i,j_t}\leq r(R_1)-\ell(R_1)+2= |R_1|+1=|v_i|+1.\end{equation}

\item It suffices to construct a horizontal-strip $\bm\mu$ for each connected component $C=(P\sqcup L,E)$ because in general we can translate each $\bm\mu$ to avoid attacking the others. Let $P=\{v_{i_1},\ldots,v_{i_r}\}$ with $i_1<\cdots<i_r$ and define the integers \begin{equation}a_t=\sum_{t'=1}^{t-1}(|v_{i_{t'}}|-M_{i_{t'},i_{t'+1}}+1), \ b_t=a_t+|v_{i_t}|-1,\text{ and }c_{t,t'}=\sum_{i_t+1\leq k\leq i_t+t'}|v_k|.\end{equation}
Now our desired horizontal-strip is \begin{equation}\bm\mu=(S_{i_1+1},\ldots,S_{i_2-1},S_{i_2+1},\ldots,S_{i_3-1},S_{i_3+1},\ldots,S_{i_r-1},S_{i_r},\ldots,S_{i_1}),\end{equation} where $\ell(S_j)=a_t$ and $r(S_j)=b_t$ if $j=i_t$ and $\ell(S_j)=b_{t-1}+c_{t,t'-1}+2$ and $r(S_j)=b_{t-1}+c_{t,t'}+1$ if $i_{t-1}<j=i_{t-1}+t'<i_t$. Indeed, we have that $r(S_j)$ is increasing, $|S_j|=|v_j|$, $M(S_{i_{t+1}},S_{i_t})=M_{i_t,i_{t+1}}$, $M(S_j,S_{i_t})=|v_j|$ for $i_{t-1}<j<i_t$, and we have $M(S_j,S_{j'})=0$ otherwise because by Part 4, we have for $i_{t-1}<j<i_t$ that \begin{align}r(S_{i_{t-1}})+1&\leq b_{t-1}+1<\ell(S_j)\leq r(S_j)\leq b_{t-1}+\sum_{i_t+1\leq k\leq i_{t+1}-1}|v'_k|+1\\\nonumber&\leq b_{t-1}+|S_{i_t}|-M_{i_{t-1},i_t}-M_{i_t,i_{t+1}}+2=a_{t+1}-1<\ell(S_{i_{t+1}}).\end{align} As an illustration, the horizontal-strip in Example \ref{ex:caterpillar} is the one constructed here.
\end{enumerate}\end{proof}

\section{The combinatorial formula}\label{sec:caterpillar}
In this section, we generalize cocharge in order to prove a combinatorial formula for the LLT polynomial $G_{\bm\lambda}(\bm x;q)$ whenever the weighted graph $\Pi(\bm\lambda)$ is triangle-free.

\begin{definition} Let $\mu$ be a partition and let $T\in\text{SSYT}_\mu$ be a tableau of shape $\mu$ with smallest entry $i$. We define the integer \begin{equation}f(T)=\max\{t: \ 0\leq t\leq \mu_1-\mu_2, \ t\leq w_i(T), \ T_{2,j'}>T_{1,j'+t} \text{ for all }1\leq j'\leq \mu_2\},\end{equation} where as before, $w_i(T)$ is the number of $i$'s in $T$. \end{definition}

\begin{remark} Informally, $f(T)$ is the maximum number of $i$'s that we can remove from $T$ so that no entry moves down when we rectify the resulting skew tableau. Alternatively, $f(T)$ is the maximum number of cells that we can shift the bottom row of $T$ to the left and still have the columns strictly increasing from bottom to top, as long as $f(T)\leq w_i(T)$. \end{remark}

\begin{definition} Let $T$ be an SSYT and let $i<j$ be integers. We denote by $T\vert_{i,j}$ the rectification of the skew tableau obtained by restricting $T$ to the entries $x$ with $i\leq x\leq j$, and we define the integer \begin{equation}\text{cocharge}_{i,j}(T)=w_i(T)-f(T\vert_{i,j}).\end{equation}
\end{definition}

\begin{example} For the tableaux $S$ and $T$ below left, we have drawn the tableaux $S\vert_{2,4}$ and $T\vert_{2,4}$ below right. We have $f(S\vert_{2,4})=3$ and $\text{cocharge}_{2,4}(S)=5-3=2$, and we have $f(T\vert_{2,4})=3$ because we must have $t\leq w_2(T\vert_{2,4})$, so $\text{cocharge}_{2,4}(T)=3-3=0$. \begin{align*} &S=\tableau{3&4\\2&2&3&4&5&5\\1&1&1&2&2&2&3&4} \ S\vert_{2,4}=\tableau{4\\3&3&4\\2&2&2&2&2&3&4}\\ \\&T=\tableau{4&5\\2&2&4&4&5&5\\1&1&1&2&3&3&3&3} \ T\vert_{2,4}=\tableau{\\4&4&4\\2&2&2&3&3&3&3}\\\end{align*}\end{example}

\begin{example} In the case where $j=i+1$, $\text{cocharge}_{i,i+1}(T)$ is the number of entries in the second row of $T\vert_{i,i+1}$, which in Example \ref{ex:cochij} we saw is $\text{cocharge}(T\vert_{i,i+1})$. \end{example}

We now state our main Theorem.

\begin{theorem} \label{thm:caterpillar} Let $\bm\lambda$ be a horizontal-strip such that the weighted graph $\Pi=\Pi(\bm\lambda)$ is triangle-free and let $\alpha_i=|v_i|$. Then the LLT polynomial of $\bm\lambda$ is \begin{equation}\label{eq:caterpillar}G_{\bm\lambda}(\bm x;q)=\sum_{T\in\text{SSYT}(\alpha)}q^{\text{cocharge}_\Pi(T)}s_{\text{shape}(T)},\end{equation} where $\text{cocharge}_\Pi(T)=\sum_{i<j}\min\{M_{i,j},\text{cocharge}_{i,j}(T)\}$.\end{theorem}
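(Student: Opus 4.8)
The plan is to induct on the total edge weight $M(\bm\lambda) = \sum_{i<j} M_{i,j}$ of the weighted graph $\Pi$, using the recurrence of Lemma~\ref{lem:inductiverelation} together with Remark~\ref{rem:changegraph} to reduce the edge weight. By Proposition~\ref{prop:caterpillar}, Part~3, each connected component of $\Pi$ is a caterpillar, and we may work one component at a time by Proposition~\ref{prop:lltfacts}, Part~2, since the tableau statistic $\text{cocharge}_\Pi$ splits as a sum over components and $\text{SSYT}(\alpha)$ for a disjoint union of shapes factors as a product. For the base case, when $\Pi$ has no edges, $M(\bm\lambda)=0$, so $G_{\bm\lambda}(\bm x;q) = \prod_i s_{\alpha_i}$ by Proposition~\ref{prop:lltfacts}, Part~1; on the other side, $\text{cocharge}_\Pi(T)=0$ for every $T$, and $\sum_{T} s_{\text{shape}(T)}$ over all SSYT of content $\alpha$ (with one column per part, say) equals the same product of Schur functions by the Pieri rule or Littlewood--Richardson. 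The case where every component is a \emph{path} (no leaves) should reduce, after applying $\kappa$ to arrange contents, to the nested Hall--Littlewood case of Theorem~\ref{thm:hl}, where $\text{cocharge}_\Pi$ must be checked to collapse to ordinary $\text{cocharge}$; this uses that $\text{cocharge}_{i,i+1}(T) = \text{cocharge}(T|_{i,i+1})$ and a cocharge-additivity argument when the $M_{i,j}$ are large enough not to truncate.

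For the inductive step, pick an edge $(v_i, v_j)$ of $\Pi$ realizing the recurrence: take rows $R = R_{i'}$, $R' = R_{j'}$ with $R \nleftrightarrow R'$ and $\ell(R') < \ell(R)$, which exists whenever $\Pi$ has an edge of positive weight and is not a union of paths-with-all-weights-saturating (these correspond to the Hall--Littlewood situation). Lemma~\ref{lem:inductiverelation} gives
\begin{equation}
G_{\bm\lambda}(\bm x;q) = q\, G_{\bm\mu}(\bm x;q) + (1-q)\, G_{\bm\nu}(\bm x;q),
\end{equation}
where $\bm\mu$ swaps the two rows (decreasing the edge weight $M_{i,j}$ by one, per Remark~\ref{rem:changegraph}) and $\bm\nu$ replaces $R, R'$ by $R \cup R'$, $R \cap R'$; one must verify using Proposition~\ref{prop:munin} that $\Pi(\bm\nu)$ is still triangle-free (the key point: $R\cap R'$ attacks a third row only when both $R$ and $R'$ do, which triangle-freeness of $\Pi$ forbids) and has strictly smaller total edge weight. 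By induction, both $G_{\bm\mu}$ and $G_{\bm\nu}$ have the claimed Schur expansions with their own statistics $\text{cocharge}_{\Pi(\bm\mu)}$, $\text{cocharge}_{\Pi(\bm\nu)}$. The combinatorial heart of the argument is then to exhibit, Schur coefficient by Schur coefficient, an identity of polynomials in $q$: for each partition $\mu$,
\begin{equation}
\sum_{\substack{T\in\text{SSYT}(\alpha)\\ \text{shape}(T)=\mu}} q^{\text{cocharge}_\Pi(T)}
= q\!\!\sum_{\substack{T\in\text{SSYT}(\alpha^{\bm\mu})\\ \text{shape}(T)=\mu}}\!\! q^{\text{cocharge}_{\Pi(\bm\mu)}(T)}
+ (1-q)\!\!\sum_{\substack{T\in\text{SSYT}(\alpha^{\bm\nu})\\ \text{shape}(T)=\mu}}\!\! q^{\text{cocharge}_{\Pi(\bm\nu)}(T)}.
\end{equation}
Since $\alpha^{\bm\mu} = \alpha$ (swapping rows does not change content multiplicities) but $\alpha^{\bm\nu}$ merges two parts, the right-hand side mixes two different content vectors, so I would set up a weight-preserving, shape-preserving bijection between $\text{SSYT}(\alpha)$ and a suitable subset-union of $\text{SSYT}(\alpha^{\bm\nu})$, modeled on the tableau bijections $f_1, f_2, f_3$ in the proof of Lemma~\ref{lem:llteq}: tableaux where the relevant entries are ``already sorted'' pair off with $\text{SSYT}(\alpha^{\bm\mu})$ contributing the $q\cdot$ term, and the rest biject with $\text{SSYT}(\alpha^{\bm\nu})$ contributing the $(1-q)\cdot$ term, with an extra $q$ bookkeeping from one forced change in $\text{cocharge}_{i,j}$.

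The main obstacle I anticipate is precisely this last step: controlling how $\text{cocharge}_\Pi(T) = \sum_{i<j}\min\{M_{i,j}, \text{cocharge}_{i,j}(T)\}$ transforms under the bijection. The quantities $\text{cocharge}_{i,j}(T) = w_i(T) - f(T|_{i,j})$ are global (they depend on rectifications of restrictions to content windows $[i,j]$), and the $\min$ with $M_{i,j}$ truncates them, so a change in $T$ at one pair of adjacent contents can in principle affect $\text{cocharge}_{i,k}(T)$ for all $k > j$; I would need a lemma showing that the function $f$ behaves additively/monotonically enough under the elementary moves of the bijection that only the single term $\min\{M_{i,j}, \text{cocharge}_{i,j}(T)\}$ changes, and by exactly one, matching the edge-weight drop $M_{i,j} \to M_{i,j}-1$. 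Establishing that $f$ is, say, a ``slack'' that is insensitive to entries outside the relevant window, and that the truncation thresholds $M_{i,j}$ in $\Pi(\bm\lambda)$, $\Pi(\bm\mu)$, $\Pi(\bm\nu)$ line up correctly via Proposition~\ref{prop:munin}, is where the real work lies; the LLT-equivalence machinery and the caterpillar structure are there to guarantee this reduction terminates and stays within the triangle-free class.
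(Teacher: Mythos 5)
Your high-level plan is the same as the paper's (induct, apply Lemma \ref{lem:inductiverelation} to write $G_{\bm\lambda}=qG_{\bm\lambda'}+(1-q)G_{\bm\lambda''}$, and match the correction term bijectively), but the proof has a genuine gap exactly where you flag it: you never construct the bijection or prove the statement you would need about how $\text{cocharge}_\Pi$ transforms, and the lemma you hope for is not in fact what is true. First, the choice of edge matters and you leave it unspecified: the paper applies the recurrence only at the edge incident to $v_1$, the vertex that is first in content reading order, which by Proposition \ref{prop:caterpillar}, Part 2 has a \emph{unique} neighbour $v_j$ (after using Lemma \ref{lem:commuting} and the rotation $\kappa$ to make the two rows adjacent and non-commuting). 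This is what makes Proposition \ref{prop:munin} apply with $M_2=0$, so that \emph{all other} edge weights of $\Pi''$ are unchanged, and what makes the correction bijection tractable: $\varphi$ simply turns $t=\alpha_1-(M-1)$ of the $j$'s in the bottom row of $S\in\text{SSYT}(\beta)$ into $1$'s, landing exactly on $\{T\in\text{SSYT}(\alpha):\text{cocharge}_{1,j}(T)\leq M-1\}$. Contracting an arbitrary edge, as you propose, leaves both endpoints with other neighbours and gives no comparable control.

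Second, your anticipated lemma that ``only the single term $\min\{M_{i,j},\text{cocharge}_{i,j}(T)\}$ changes, and by exactly one'' is false as stated, and the actual argument is subtler: under $\varphi$ the window statistic $\text{cocharge}_{j,k}$ (for the one neighbour $v_k$ of $v_j$ with $k>j$) genuinely can change; the paper's jeu de taquin analysis shows that whenever it does, both values are at least $M_{j,k}$, so the truncation by $M_{j,k}$ hides the discrepancy. That step relies on the quantitative caterpillar inequality $n_j\leq(M-1)+\alpha_2+\cdots+\alpha_{j-1}\leq\alpha_j-M_{j,k}$ coming from Proposition \ref{prop:caterpillar}, Parts 3 and 4 -- an input your sketch never invokes -- together with Theorem \ref{thm:jdt} to organize the rectification. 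Two smaller inaccuracies: modeling the bijection on $f_1,f_2,f_3$ of Lemma \ref{lem:llteq} is not the right template (those are the ingredients of the LLT-equivalence, not of the tableau-statistic identity), and your proposed reduction of the ``all components are paths'' case to Theorem \ref{thm:hl} does not work (nested rows generically pairwise attack, so the Hall--Littlewood graphs are far from triangle-free); no such reduction is needed, since the only base cases required are a single row and, in the induction on the number of rows, the splitting off of an isolated vertex via the Littlewood--Richardson rule.
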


Before we prove Theorem \ref{thm:caterpillar}, we will present some examples and special cases to familiarize ourselves with this generalization of cocharge.

\begin{example} \label{ex:theorem}
Let $\bm\lambda=(6/5,9/6,7/2,4/0)$ be the horizontal-strip below left with the rightmost cells in each row labelled in content reading order, so that $\Pi(\bm\lambda)$ is the caterpillar below right.

$$
\begin{tikzpicture}
\node[shape=circle,draw=black,minimum size=10mm] (3) at (9,6) {4};
\draw (9,6.75) node {$v_1$};
\node[shape=circle,draw=black,minimum size=10mm] (6) at (12,6) {5};
\draw (12,6.75) node {$v_3$};
\node[shape=circle,draw=black,minimum size=10mm] (8) at (15,6) {3};
\draw (15,6.75) node {$v_4$};
\node[shape=circle,draw=black,minimum size=10mm] (4) at (12,3.4) {1};
\draw (12,2.65) node {$v_2$};
\path [-](3) edge node [above]{$3$} (6);
\path [-](6) edge node [above]{$2$} (8);
\path [-](4) edge node [left]{$1$} (6);
\draw (4.75,3.25) node {2} (6.25,4.25) node {4} (5.25,5.25) node {3}
 (3.75,6.25) node {1};
\draw (4.5,3)--(5,3)--(5,3.5)--(4.5,3.5)--(4.5,3)  (5,4)--(6.5,4)--(6.5,4.5)--(5,4.5)--(5,4) (5.5,4)--(5.5,4.5) (6,4)--(6,4.5) (3,5)--(5.5,5)--(5.5,5.5)--(3,5.5)--(3,5) (3.5,5)--(3.5,5.5) (4,5)--(4,5.5) (4.5,5)--(4.5,5.5) (5,5)--(5,5.5)  (2,6)--(4,6)--(4,6.5)--(2,6.5)--(2,6) (2.5,6)--(2.5,6.5) (3,6)--(3,6.5) (3.5,6)--(3.5,6.5);
\draw (7,3.25) node {$R_1$} (7,4.25) node {$R_2$} (7,5.25) node {$R_3$} (7,6.25) node {$R_4$};
\end{tikzpicture}$$

To calculate the coefficient of $s_{733}$, we consider the three tableaux of weight $\alpha=4153$ and shape $733$ as follows. The values of $\text{cocharge}_{\Pi}$ are calculated below. \begin{align*}T \ = \ &\tableau{4&4&4\\3&3&3\\1&1&1&1&2&3&3} \ T_1 \ = \ & &\tableau{4&4&4\\2&3&3\\1&1&1&1&3&3&3} \ T_2 \ = \ & &\tableau{3&4&4\\2&3&3\\1&1&1&1&3&3&4}\\ & 2+1+2=5 & & 3+0+2=5& & 3+1+2=6\end{align*}
Therefore, the coefficient of $s_{733}$ is $(q^6+2q^5)$. 
\end{example}

\begin{corollary} \label{cor:path} Let $\bm\lambda$ be a horizontal-strip whose weighted graph $\Pi(\bm\lambda)$ is the path below.
$$\begin{tikzpicture}
\node[shape=circle,draw=black,minimum size=10mm](1) at (0,0){$\alpha_1$};
\node[shape=circle,draw=black,minimum size=10mm](2) at (2,0){$\alpha_2$};
\node[shape=circle,draw=black,minimum size=10mm](3) at (4,0){$\cdots$};
\node[shape=circle,draw=black,minimum size=10mm](4) at (6,0){$\cdots$};
\node[shape=circle,draw=black,minimum size=10mm](5) at (8,0){$\cdots$};
\node[shape=circle,draw=black,minimum size=10mm](6) at (10,0){$\alpha_n$};
\draw (0,0.75) node {$v_1$} (2,0.75) node {$v_2$} (10,0.75) node {$v_n$};
\path [-](1) edge node [above]{$M_1$} (2);
\path [-](2) edge node [above]{$M_2$} (3);
\path [-](3) edge node {}(4);
\path [-](4) edge node {}(5);
\path [-](5) edge node [above]{$M_{n-1}$} (6);
\end{tikzpicture}$$
Then the LLT polynomial of $\bm\lambda$ is \begin{equation}G_{\bm\lambda}(\bm x;q)=\sum_{T\in\text{SSYT}(\alpha)}q^{\text{cocharge}_\Pi(T)}s_{\text{shape}(T)},\end{equation} where $\text{cocharge}_\Pi(T)=\sum_{i=1}^{n-1}\min\{M_i,\text{number of entries in the second row of }T\vert_{i,i+1}\}$.
\end{corollary}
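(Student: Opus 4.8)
The plan is to obtain this as a direct specialization of Theorem~\ref{thm:caterpillar}. A path is in particular a caterpillar, with the decomposition $V = P \sqcup L$ given by $P$ the whole path and $L = \emptyset$, so it is a triangle-free tree and Theorem~\ref{thm:caterpillar} applies to $\bm\lambda$. Moreover, by Proposition~\ref{prop:caterpillar}, Part 3 (applied with $L=\emptyset$), the content reading order labelling of the vertices agrees with the order along the path, so the edges of $\Pi = \Pi(\bm\lambda)$ are exactly the consecutive pairs $(v_i,v_{i+1})$, with weight $M_{i,i+1} = M_i$, and $M_{i,j} = 0$ for all $j > i+1$. Thus \eqref{eq:caterpillar} holds with $\text{cocharge}_\Pi(T) = \sum_{i<j}\min\{M_{i,j},\text{cocharge}_{i,j}(T)\}$, and it remains only to simplify this exponent.

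First I would dispose of the non-consecutive pairs. Since $T \in \text{SSYT}(\alpha)$ has weight $\alpha$ with $\alpha_i = |v_i| \ge 1$ for every $i$, the tableau $T$ contains at least one entry equal to $i$ for each $1 \le i \le n$; because $f(S)$ is bounded above by the multiplicity of the smallest entry of $S$, and $i$ is the smallest entry of $T\vert_{i,j}$, we get $f(T\vert_{i,j}) \le w_i(T\vert_{i,j}) = w_i(T)$, hence $\text{cocharge}_{i,j}(T) = w_i(T) - f(T\vert_{i,j}) \ge 0$. Consequently $\min\{M_{i,j},\text{cocharge}_{i,j}(T)\} = \min\{0,\text{cocharge}_{i,j}(T)\} = 0$ whenever $j > i+1$, leaving $\text{cocharge}_\Pi(T) = \sum_{i=1}^{n-1}\min\{M_i,\text{cocharge}_{i,i+1}(T)\}$.

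Finally, I would invoke the Example immediately preceding the corollary, which (via the two-letter catabolism analysed in Example~\ref{ex:cochij}) identifies $\text{cocharge}_{i,i+1}(T)$ with the number of entries in the second row of $T\vert_{i,i+1}$. Substituting this into the simplified exponent yields exactly the stated formula for $\text{cocharge}_\Pi$, completing the proof. There is no serious obstacle here: all of the substance lies in Theorem~\ref{thm:caterpillar} itself, and the only points requiring a line of justification are the vanishing of the non-consecutive contributions (handled by $\text{cocharge}_{i,j}(T)\ge 0$) and the reinterpretation of $\text{cocharge}_{i,i+1}$ as a second-row count.
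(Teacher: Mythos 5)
Your proposal is correct and matches the paper's (implicit) derivation: the corollary is exactly Theorem \ref{thm:caterpillar} specialized to a path, with the non-edge terms vanishing because $M_{i,j}=0$ and $\text{cocharge}_{i,j}(T)\geq 0$, and with $\text{cocharge}_{i,i+1}(T)$ identified with the second-row length of $T\vert_{i,i+1}$ as in the example preceding the corollary. The appeal to Proposition \ref{prop:caterpillar}, Part 3 is harmless but unnecessary, since the labelling and edge weights of $\Pi(\bm\lambda)$ are already given in the hypothesis.
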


\begin{remark} Corollary \ref{cor:path} generalizes \cite[Theorem 1]{lltmpath}, which gives a combinatorial formula for $G_{\bm\lambda}(\bm x;q)$ in certain cases where $\Pi(\bm\lambda)$ is a path as above, with the additional constraint that $M_{i-1}+M_i\leq\alpha_i$ for $2\leq i\leq n-1$. Note that by Proposition \ref{prop:caterpillar}, Part 4, the weaker inequality $M_{i-1}+M_i\leq\alpha_i+1$ holds for $2\leq i\leq n-1$. \end{remark}

\begin{example} Let $\bm\lambda$ be a horizontal-strip with exactly two rows, so that $\Pi(\bm\lambda)$ is $$\begin{tikzpicture}
\node[shape=circle,draw=black,minimum size=10mm](1) at (0,0){$a$};
\node[shape=circle,draw=black,minimum size=10mm](2) at (3,0){$b$};
\draw (0,0.75) node {$v_i$} (3,0.75) node {$v_j$};
\path [-] (1) edge node [above]{$M$} (2);
\end{tikzpicture}$$
for some $a\geq b\geq M$, where $(i,j)$ is either $(1,2)$ or $(2,1)$, so $\alpha=(a,b)$ or $\alpha=(b,a)$ respectively. In either case, for each $0\leq k\leq b$, there is a unique tableau $T_k$ with content $\alpha$ and shape $(a+b-k)k$. Therefore, by Corollary \ref{cor:path}, the LLT polynomial is 
\begin{equation}G_{\bm\lambda}(\bm x;q)=\sum_{k=0}^bq^{\min\{M,k\}}s_{(a+b-k)k}=s_{(a+b)}+\cdots+q^Ms_{(a+b-M)M}+\cdots+q^Ms_{ab}.\end{equation}
Note that in this example, the formula \eqref{eq:caterpillar} does not depend on the labelling of $\Pi$. 
\end{example}

We now illustrate the idea of the proof in the case of Example \ref{ex:theorem}. 

\begin{proof}[Proof of Theorem \ref{thm:caterpillar} in the case of Example \ref{ex:theorem}. ]
We will use induction on $M(\bm\lambda)$. By applying Lemma \ref{lem:inductiverelation} to rows $R_3$ and $R_4$ and by using Proposition \ref{prop:munin}, we can write \begin{equation}G_{\bm\lambda}(\bm x;q)=qG_{\bm\lambda'}(\bm x;q)+(1-q)G_{\bm\lambda''}(\bm x;q),\end{equation} where the weighted graphs $\Pi=\Pi(\bm\lambda)$, $\Pi'=\Pi(\bm\lambda')$, and $\Pi''=\Pi(\bm\lambda'')$ are given below. $$
\begin{tikzpicture}
\node[shape=circle,draw=black,minimum size=10mm](1) at (0,0) {4};
\node at (0,0.75) {$v_1$};
\node[shape=circle,draw=black,minimum size=10mm](2) at (2,0) {5};
\node at (2,0.75) {$v_3$};
\node[shape=circle,draw=black,minimum size=10mm](3) at (2,-1.8) {1};
\node at (2,-2.55) {$v_2$};
\node[shape=circle,draw=black,minimum size=10mm](4) at (4,0) {3};
\node at (4,0.75) {$v_4$};
\path [-](1) edge node [above]{3} (2);
\path [-](2) edge node [right]{1} (3);
\path [-](2) edge node [above]{2} (4);
\end{tikzpicture} \hspace{20pt}
\begin{tikzpicture}
\node[shape=circle,draw=black,minimum size=10mm](1) at (0,0) {4};
\node at (0,0.75) {$v_1$};
\node[shape=circle,draw=black,minimum size=10mm](2) at (2,0) {5};
\node at (2,0.75) {$v_3$};
\node[shape=circle,draw=black,minimum size=10mm](3) at (2,-1.8) {1};
\node at (2,-2.55) {$v_2$};
\node[shape=circle,draw=black,minimum size=10mm](4) at (4,0) {3};
\node at (4,0.75) {$v_4$};
\path [-](1) edge node [above]{\textcolor{red}2} (2);
\path [-](2) edge node [right]{1} (3);
\path [-](2) edge node [above]{2} (4);
\end{tikzpicture} \hspace{20pt}
\begin{tikzpicture}
\node[shape=circle,draw=black,minimum size=10mm](1) at (0,0) {\textcolor{red}2};
\node at (0,0.75) {$v_1$};
\node[shape=circle,draw=black,minimum size=10mm](2) at (2,0) {\textcolor{red}7};
\node at (2,0.75) {$v_3$};
\node[shape=circle,draw=black,minimum size=10mm](3) at (2,-1.8) {1};
\node at (2,-2.55) {$v_2$};
\node[shape=circle,draw=black,minimum size=10mm](4) at (4,0) {3};
\node at (4,0.75) {$v_4$};
\path [-](1) edge node [above]{\textcolor{red}2} (2);
\path [-](2) edge node [right]{1} (3);
\path [-](2) edge node [above]{2} (4);
\end{tikzpicture}$$

By our induction hypothesis, our task is now to prove that \begin{align}\sum_{T\in\text{SSYT}(4153)}q^{\text{cocharge}_\Pi(T)}s_{\text{shape}(T)}=q&\sum_{T\in\text{SSYT}(4153)}q^{\text{cocharge}_{\Pi'}(T)}s_{\text{shape}(T)}\\\nonumber+ \ (1-q)&\sum_{S\in\text{SSYT}(2173)}q^{\text{cocharge}_{\Pi''}(S)}s_{\text{shape}(S)}.\end{align}

Let us consider the coefficient of $s_{733}$. The first sum on the right hand side corresponds to the three tableaux from Example \ref{ex:theorem} with the values of $\text{cocharge}_{\Pi'}$ calculated below.\begin{align*}T \ = \ &\tableau{4&4&4\\3&3&3\\1&1&1&1&2&3&3} \ T_1 \ = \ & &\tableau{4&4&4\\2&3&3\\1&1&1&1&3&3&3} \ T_2 \ = \ & &\tableau{3&4&4\\2&3&3\\1&1&1&1&3&3&4}\\ & 2+1+2=5 & & 2+0+2=4& & 2+1+2=5\end{align*} The factor of $q$ tells us to increase these values by one, and this corresponds to increasing $M_{1,3}$ from $2$ to $3$. Indeed, for the tableaux $T_1$ and $T_2$, because $\text{cocharge}_{1,3}(T_i)=3$, the contribution to $\text{cocharge}_\Pi(T_i)$ is now $\min\{3,3\}=3$ instead of $\min\{2,3\}=2$. However, the tableau $T$ has $\text{cocharge}_{1,3}(T)=2$, so in this case we do not want to increase the cocharge by one. The second sum allows us to make this correction. It corresponds to the tableau below. \begin{align*}S \ = \ &\tableau{4&4&4\\3&3&3\\1&1&2&3&3&3&3} \\ & 2+1+2=5 \end{align*} The factor of $(1-q)$ tells us to change the term $q^6s_{733}$ corresponding to $T$ back into the term $q^5s_{733}$. In general, this second sum precisely corrects for those tableaux for which we do not want to increase the cocharge by one when we increase $M_{1,3}$ by one. To be specific, we will define a bijection \begin{equation}\varphi:\text{SSYT}(2173)\to\{T\in\text{SSYT}(4153): \ \text{cocharge}_{1,3}(T)\leq 2\}\end{equation} such that $\text{cocharge}_{\Pi''}(S)=\text{cocharge}_{\Pi'}(\varphi(S))$. Informally, this bijection is given by changing two $3$'s on the bottom row of $S$ into $1$'s. We have defined $\text{cocharge}_{1,3}$ specifically to measure the property of being in the image of $\varphi$. In this case, this image is precisely those tableaux $T$ for which two $1$'s on the bottom row can be removed and two $3$'s can be added, meaning that $f(T)\geq 2$ and $\text{cocharge}_{1,3}(T)\leq 4-2=2$. Also note that by definition we have \begin{equation}\text{cocharge}_\Pi(T)=\begin{cases}\text{cocharge}_{\Pi'}(T),&\text{ if }\text{cocharge}_{1,3}(T)\leq 2,\\\text{cocharge}_{\Pi'}(T)+1,&\text{ if }\text{cocharge}_{1,3}(T)\geq 3.\end{cases}\end{equation}
Therefore, we have \begin{align} q&\sum_{T\in\text{SSYT}(4153)}q^{\text{cocharge}_{\Pi'}(T)}s_{\text{shape}(T)}+(1-q)\sum_{S\in\text{SSYT}(2173)}q^{\text{cocharge}_{\Pi''}(S)}s_{\text{shape}(S)}\\\nonumber=q&\sum_{\substack{T\in\text{SSYT}(4153)\\\text{cocharge}_{1,3}(T)\geq 3}}q^{\text{cocharge}_{\Pi'}(T)}s_{\text{shape}(T)}\\\nonumber+ \ q&\sum_{\substack{T\in\text{SSYT}(4153)\\\text{cocharge}_{1,3}(T)\leq 2}}q^{\text{cocharge}_{\Pi'}(T)}s_{\text{shape}(T)}+(1-q)\sum_{S\in\text{SSYT}(2173)}q^{\text{cocharge}_{\Pi''}(S)}s_{\text{shape}(S)}\\\nonumber=q&\sum_{\substack{T\in\text{SSYT}(4153)\\\text{cocharge}_{1,3}(T)\geq 3}}q^{\text{cocharge}_{\Pi'}(T)+1}s_{\text{shape}(T)}+\sum_{\substack{T\in\text{SSYT}(4153)\\\text{cocharge}_{1,j}(T)\leq 2}}q^{\text{cocharge}_{\Pi'}(T)}s_{\text{shape}(T)}\\\nonumber=&\sum_{T\in\text{SSYT}(4153)}q^{\text{cocharge}_\Pi(T)}s_{\text{shape}(T)},\end{align}as desired.\end{proof}

We will now make this argument precise and check the details. We first recall the Littlewood--Richardson rule, which gives a combinatorial formula for the product of two Schur functions.

\begin{theorem} \cite[Section 5.1, Corollary 2 and Corollary 3]{youngtab} (The Littlewood--Richardson rule) Let $\lambda$, $\mu$, and $\nu$ be partitions and fix a tableau $S\in\text{SSYT}_\mu$. Denote by $c^\lambda_{\mu,\nu}$ the number of tableaux $T\in\text{SSYT}_{\lambda/\nu}$ whose rectification is $S$. Then the product of Schur functions $s_\mu$ and $s_\nu$ expands in the basis of Schur functions as \begin{equation}s_\mu s_\nu=\sum_\lambda c^\lambda_{\mu,\nu}s_\lambda.\end{equation}\end{theorem}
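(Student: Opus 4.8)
The plan is to deduce the rule from the single combinatorial engine provided by jeu de taquin, namely Theorem~\ref{thm:jdt}: every skew tableau has a well-defined rectification, and each jeu de taquin slide merely relocates entries, so it preserves the monomial $\bm x^T$. The starting observation is that for \emph{any} skew shape $\theta$ we may sort the semistandard tableaux of shape $\theta$ according to their rectification. Writing $\mathrm{rect}(T)$ for the rectification and using $\bm x^T=\bm x^{\mathrm{rect}(T)}$, the combinatorial definition of the skew Schur function gives
\[
s_\theta=\sum_{T\in\text{SSYT}_\theta}\bm x^T=\sum_{V}\bigl|\{T\in\text{SSYT}_\theta:\ \mathrm{rect}(T)=V\}\bigr|\,\bm x^V,
\]
where $V$ ranges over all straight-shape tableaux. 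To turn the right-hand side into a sum of Schur functions I would group the $V$ by shape, which requires the following independence statement.

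\textbf{Key Lemma.} For a fixed skew shape $\theta$ and a fixed straight shape $\rho$, the number $\bigl|\{T\in\text{SSYT}_\theta:\ \mathrm{rect}(T)=V\}\bigr|$ is the same for every $V\in\text{SSYT}_\rho$; call it $d(\theta,\rho)$. Granting this, summing the display over all $V$ of shape $\rho$ collapses $\sum_{V\text{ of shape }\rho}\bm x^V$ to $s_\rho$, so $s_\theta=\sum_\rho d(\theta,\rho)\,s_\rho$. Taking $\theta=\lambda/\nu$ and $\rho=\mu$ produces exactly the coefficient in the statement: $d(\lambda/\nu,\mu)$ is the number of $T\in\text{SSYT}_{\lambda/\nu}$ rectifying to the fixed $S\in\text{SSYT}_\mu$, and the Key Lemma is precisely what makes this count independent of the chosen $S$, as the statement presupposes. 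It then remains to see that the \emph{product} $s_\mu s_\nu$ carries the same coefficients. For this I would realize $s_\mu s_\nu$ as a skew Schur function: place $\mu$ in the lower-left and $\nu$ in the upper-right so that the two pieces share no row and no column. A semistandard filling of this disconnected skew shape $\Theta$ is precisely an independent pair $(P,Q)$ with $P\in\text{SSYT}_\mu$ and $Q\in\text{SSYT}_\nu$, whence $s_\Theta=s_\mu s_\nu$. By the displayed expansion $s_\mu s_\nu=\sum_\lambda d(\Theta,\lambda)\,s_\lambda$, and the proof is finished once $d(\Theta,\lambda)=c^\lambda_{\mu,\nu}$.

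I expect the Key Lemma to be the main obstacle; everything else is bookkeeping with Theorem~\ref{thm:jdt}. To prove it I would argue that two targets $V,V'$ of the same shape $\rho$ are joined by a chain of elementary moves (either the plactic/Knuth relations on reading words, or Haiman's dual-equivalence moves), each of which extends to a shape-preserving involution of $\text{SSYT}_\theta$ that commutes with rectification; composing these matches the fiber over $V$ with the fiber over $V'$. The remaining identity $d(\Theta,\lambda)=d(\lambda/\nu,\mu)$ is the familiar symmetry of the Littlewood--Richardson coefficients: computing $\mathrm{rect}$ on $\Theta$ by first holding the already-rectified $\mu$-piece and then sliding the $\nu$-piece through it (legitimate by the confluence in Theorem~\ref{thm:jdt}) records a skew tableau of shape $\lambda/\mu$ and content $\nu$, and the standard tableau-switching bijection converts these into tableaux of shape $\lambda/\nu$ rectifying to $S$. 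I would isolate the Key Lemma and this switching bijection as the two technical inputs, both standard consequences of jeu de taquin, and present the rest as the short computation above.
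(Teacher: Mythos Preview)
The paper does not prove this theorem; it merely cites it from Fulton's \emph{Young Tableaux} (Section~5.1, Corollaries~2 and~3) and then uses it as a tool in the proof of Theorem~\ref{thm:caterpillar}. So there is no ``paper's own proof'' to compare against.

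That said, your outline is essentially the standard jeu de taquin proof that Fulton gives: expand $s_\theta$ by grouping skew tableaux according to their rectification, invoke the independence of the fiber size on the choice of target (your Key Lemma), realize $s_\mu s_\nu$ as the skew Schur function of a disconnected shape, and then identify the resulting coefficient $d(\Theta,\lambda)$ with $d(\lambda/\nu,\mu)$ via a switching argument. You have correctly isolated the two nontrivial inputs---the Key Lemma (independence of the Littlewood--Richardson count on the choice of reference tableau $S$) and the symmetry $d(\Theta,\lambda)=d(\lambda/\nu,\mu)$---and you are right that both are standard consequences of the confluence of jeu de taquin, though neither is a one-liner. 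Your sketch of the Key Lemma via Knuth moves or dual equivalence is on target; Fulton does it through the plactic monoid. The switching step you describe is also correct in spirit, though note that what you get directly from sliding the $\nu$-piece through the $\mu$-piece is a tableau of shape $\lambda/\mu$ rectifying to a fixed tableau of shape $\nu$, so you are implicitly also using the symmetry $c^\lambda_{\mu,\nu}=c^\lambda_{\nu,\mu}$ (or equivalently a second application of the switching bijection) to land on $\lambda/\nu$ with rectification $S$.
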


We now prove Theorem \ref{thm:caterpillar}. \\

\begin{proof}[Proof of Theorem \ref{thm:caterpillar}. ]
Let $\bm\lambda=(R_1,\ldots,R_n)$ and let $R_{i'}$ be the row corresponding to the vertex $v_1$. We use induction on $n$. If $n=1$, then both sides of \eqref{eq:caterpillar} are $s_{\alpha_1}$, so we may assume that $n\geq 2$. \\

We first consider the case where the vertex $v_1$ has no neighbour. Let $\tilde \alpha=(0,\alpha_2,\ldots,\alpha_n)$ and note that we can associate a tableau $\tilde T\in\text{SSYT}_{\lambda/(\alpha_1)}(\tilde\alpha)$ with a tableau $T\in\text{SSYT}_\lambda(\alpha)$ by placing $\alpha_1$ $1$'s underneath $\tilde T$. Because $\text{cocharge}_\Pi$ is defined by restricting to the appropriate entries and rectifying, we have $\text{cocharge}_\Pi(T)=\text{cocharge}_\Pi(T\vert_{2,n})$. Now using Proposition \ref{prop:lltfacts}, Part 2, our induction hypothesis, and the Littlewood--Richardson rule, we have

\begin{align}
G_{\bm\lambda}(\bm x;q)&=G_{(R_{i'})}(\bm x;q)G_{(R_1,\ldots,R_{i'-1},R_{i'+1},\ldots,R_n)}(\bm x;q)=\sum_{S\in\text{SSYT}(\tilde\alpha)}q^{\text{cocharge}_\Pi(S)}s_{\text{shape}(S)}s_{\alpha_1}\\\nonumber&=\sum_{S\in\text{SSYT}(\tilde\alpha)}\sum_{\substack{T\in\text{SSYT}(\alpha) \\ T\vert_{2,n}=S}}q^{\text{cocharge}_\Pi(T)}s_{\text{shape}(T)}=\sum_{T\in\text{SSYT}(\alpha)}q^{\text{cocharge}_\Pi(T)}s_{\text{shape(T)}},
\end{align}

as desired. So we now suppose that the vertex $v_1$ has a neighbour $v_j$ corresponding to some row $R_{j'}$ and note that by Proposition \ref{prop:caterpillar}, Part 2, this neighbour is unique. We also use induction on $M(\bm\lambda)$. If $M(\bm\lambda)=0$, then the vertex $v_1$ has no neighbour, so we may assume that $M(\bm\lambda)\geq 1$. Using Proposition \ref{prop:caterpillar}, Part 2 again, the vertex $v_j$ has at most one neighbour $v_k$ for which $j<k$. By Proposition \ref{prop:lltfacts}, Part 2, we may assume that $i'>j'$. We claim that we may further assume that $i'=j'+1$ and that $R_{i'}\nleftrightarrow R_{j'}$. \\

For $1\leq t<i'$ with $t\neq j'$, because $M(R_t,R_{i'})=0$ and $r(R_{i'})\leq r(R_t)$, we must in fact have $r(R_{i'})<\ell(R_t)-1$ and $R_{i'}\leftrightarrow R_t$ by Proposition \ref{prop:mrirj}. Therefore, by Lemma \ref{lem:commuting}, we may move row $R_{i'}$ down to assume without loss of generality that $i'=j'+1$. If $R_{i'}\nleftrightarrow R_{j'}$, then we have established our claim. Otherwise, if $R_{i'}\leftrightarrow R_{j'}$, then we continue to use Lemma \ref{lem:commuting} to move the row $R_{i'}$ down, then we use Proposition \ref{prop:lltfacts}, Part 2, to move $R_{i'}$ back to the top and decrease $\ell(R_{i'})$ by one, and we use Lemma \ref{lem:commuting} to again assume that $i'=j'+1$. If $R_{i'}\nleftrightarrow R_{j'}$ then we are done, otherwise we continue this process, decreasing $\ell(R_{i'})$ by one each time. Because $R_{i'}$ and $R_{j'}$ will not commute when $\ell(R_{i'})<\ell(R_{j'})$, this process will eventually terminate and we may assume that $i'=j'+1$ and $R_{i'}\nleftrightarrow R_{j'}$. Now we can apply Lemma \ref{lem:inductiverelation} in order to write \begin{equation}G_{\bm\lambda}(\bm x;q)=qG_{\bm\lambda'}(\bm x;q)+(1-q)G_{\bm\lambda''}(\bm x;q),\end{equation} where $\bm\lambda'=(R_1,\ldots,R_{i'},R_{j'},\ldots,R_n)$ and $\bm\lambda''=(R_1,\ldots,R_{i'}\cup R_{j'},R_{i'}\cap R_{j'},\ldots,R_n)$. By Proposition \ref{prop:munin}, the graphs $\Pi=\Pi(\bm\lambda)$, $\Pi'=\Pi(\bm\lambda')$, and $\Pi''=\Pi(\bm\lambda'')$ are as below, where $M=M_{1,j}$ and $c=\alpha_1+\alpha_j-(M-1)$. 

$$
\begin{tikzpicture}
\node[shape=circle,draw=black,minimum size=10mm](1) at (0,0) {$\alpha_1$};
\node at (0,0.75) {$v_1$};
\node[shape=circle,draw=black,minimum size=10mm](2) at (2,0) {$\alpha_j$};
\node at (2,0.75) {$v_j$};
\node (3) at (1,-1.7){};
\node (4) at (2,-1.7){};
\node (5) at (3,-1.7){};
\node[ellipse, minimum height=10mm,minimum width=30mm,draw] at (2,-1.7) {};
\node[shape=circle,draw=black,minimum size=10mm](6) at (4,0) {$\cdots$};
\node at (4,0.75) {$v_k$};
\path [-](1) edge node [above]{$M$} (2);
\path [-](2) edge node []{} (3);
\path [-](2) edge node []{} (4);
\path [-](2) edge node []{} (5);
\path [-](2) edge node []{} (6);
\end{tikzpicture} \hspace{20pt} \begin{tikzpicture}
\node[shape=circle,draw=black,minimum size=10mm](1) at (0,0) {$\alpha_1$};
\node at (0,0.75) {$v_1$};
\node[shape=circle,draw=black,minimum size=10mm](2) at (2,0) {$\alpha_j$};
\node at (2,0.75) {$v_j$};
\node (3) at (1,-1.7){};
\node (4) at (2,-1.7){};
\node (5) at (3,-1.7){};
\node[ellipse, minimum height=10mm,minimum width=30mm,draw] at (2,-1.7) {};
\node[shape=circle,draw=black,minimum size=10mm](6) at (4,0) {$\cdots$};
\node at (4,0.75) {$v_k$};
\path [-](1) edge node [above]{$M-1$} (2);
\path [-](2) edge node []{} (3);
\path [-](2) edge node []{} (4);
\path [-](2) edge node []{} (5);
\path [-](2) edge node []{} (6);
\end{tikzpicture} \hspace{20pt} \begin{tikzpicture}
\node[shape=circle,draw=black,minimum size=10mm](1) at (0,0) {\tiny{M-1}};
\node at (0,0.75) {$v_1$};
\node[shape=circle,draw=black,minimum size=10mm](2) at (2,0) {$c$};
\node at (2,0.75) {$v_j$};
\node (3) at (1,-1.7){};
\node (4) at (2,-1.7){};
\node (5) at (3,-1.7){};
\node[ellipse, minimum height=10mm,minimum width=30mm,draw] at (2,-1.7) {};
\node[shape=circle,draw=black,minimum size=10mm](6) at (4,0) {$\cdots$};
\node at (4,0.75) {$v_k$};
\path [-](1) edge node [above]{$M-1$} (2);
\path [-](2) edge node []{} (3);
\path [-](2) edge node []{} (4);
\path [-](2) edge node []{} (5);
\path [-](2) edge node []{} (6);
\end{tikzpicture}$$

Let $\beta=(M-1,\alpha_2,\ldots,\alpha_{j-1},c,\alpha_{j+1},\ldots,\alpha_n)$ and let $t=\alpha_1-(M-1)$. We define a map \begin{equation}\varphi:\text{SSYT}(\beta)\to\{T\in\text{SSYT}(\alpha): \ \text{cocharge}_{1,j}(T)\leq M-1\}\end{equation}
as follows. For $S\in\text{SSYT}(\beta)$, let $\varphi(S)=T$ be the tableau of the same shape given by \begin{equation}T_{i',j'}=\begin{cases} 1 & \text{ if }i'=1\text{ and }j'\leq t,\\ S_{1,j'-t} & \text{ if }i'=1\text{ and }t<j'\leq j_1, \\S_{i',j'} & \text{ otherwise},\end{cases}\end{equation} where $j_1$ is the column of the rightmost $j$ in $S$. As an illustration, for $t=2$ and the tableaux $S_1$ and $S_2$ below left, we have $T_1=\varphi(S_1)$ and $T_2=\varphi(S_2)$ below right. Informally, we change the two red $3$'s on the bottom row into $1$'s. \begin{align*}S_1&=\tableau{3&4&5&5&5\\2&3&3&4&4&5\\1&1&1&2&3&3&3&3&\textcolor{red}3&\textcolor{red}3&5} \ &T_1&=\tableau{3&4&5&5&5\\2&3&3&4&4&5\\\textcolor{red}1&\textcolor{red}1&1&1&1&2&3&3&3&3&5} \\ \\ S_2&=\tableau{3&5&5\\2&3&3&4&4&5&5&5\\1&1&1&2&3&3&3&3&\textcolor{red}3&\textcolor{red}3&5} &T_2&=\tableau{3&5&5\\2&3&3&4&4&5&5&5\\\textcolor{red}1&\textcolor{red}1&1&1&1&2&3&3&3&3&5}\end{align*}
We first show that the map $\varphi$ is well-defined and is a bijection. Let $n_j$ denote the number of $j$'s in $S$ that are not on the bottom row. Because the columns of $S$ are strictly increasing, $n_j$ is at most the number of entries in $S$ less than $j$, that is $n_j\leq (M-1)+\alpha_2+\cdots+\alpha_{j-1}$. By Proposition \ref{prop:caterpillar}, Part 3 and Part 4, this means that $n_j\leq \alpha_j$, so $S$ has at least $c-\alpha_j=t$ $j$'s on the bottom row and indeed $t$ $j$'s have been replaced by $t$ $1$'s and $\varphi(T)\in\text{SSYT}(\alpha)$. Furthermore, by construction, $f(T)=f(S)+t\geq t$, so $\text{cocharge}_{1,j}(T)=\text{cocharge}_{1,j}(S)\leq \alpha_1-t=M-1$. Also, given $T\in\text{SSYT}(\beta)$ such that $\text{cocharge}_{1,j}(T)\leq M-1$, then $f(T)\geq t$ and we can define $S=\varphi^{-1}(T)\in\text{SSYT}(\alpha)$ by \begin{equation}S_{i',j'}=\begin{cases} T_{1,j'+t} & \text{ if }i'=1\text{ and }j'\leq j_1-t,\\ j & \text{ if }i'=1\text{ and }j_1-t<j'\leq j_1, \\T_{i',j'} & \text{ otherwise.}\end{cases}\end{equation} We now claim that $\text{cocharge}_{\Pi''}(S)=\text{cocharge}_{\Pi'}(T)$. We have already shown that $f(T)=f(S)+t$ and therefore $\text{cocharge}_{1,j}(S)=\text{cocharge}_{1,j}(T)$. For $2\leq i\leq j-1$, the tableau $S\vert_{i,j}$ is simply $T\vert_{i,j}$ with $t$ $j$'s appended to the right of the first row, and therefore $f(S\vert_{i,j})=f(T\vert_{i,j})$ and $\text{cocharge}_{i,j}(S)=\text{cocharge}_{i,j}(T)$. It remains to consider $\text{cocharge}_{j,k}$. In fact, it could happen that $\text{cocharge}_{j,k}(S)\neq\text{cocharge}_{j,k}(T)$. However, we claim that this is only possible when both integers are at least $M_{j,k}$, so that \begin{equation}\min\{M_{j,k},\text{cocharge}_{j,k}(S)\}=M_{j,k}=\min\{M_{j,k},\text{cocharge}_{j,k}(T)\}.\end{equation} We restrict $S$ and $T$ to entries $x$ with $j\leq x\leq k$ and we consider how the entries move when we rectify these tableaux. By Theorem \ref{thm:jdt}, the rectification does not depend on the order of choices of inside corners, so let us begin by performing jeu de taquin slides into all inside corners that are not on the first row to produce tableaux $S'$ and $T'$. Because $S$ and $T$ differ only in their first row, that is $S_{i',j'}=T_{i',j'}$ for all $i'\geq 2$, we also have $S'_{i',j'}=T'_{i',j'}$ for all $i'\geq 2$. Also note that the $n_j$ $j$'s of $S'$ and $T'$ that are not on the first row must now be on the second row, or in other words $S'_{2,j'}=T'_{2,j'}=j$ for $1\leq j'\leq n_j$. \\

Now repeatedly perform jeu de taquin slides on $S'$ until we obtain a skew tableau $S''$ of shape $\sigma/(n_j)$ for some $\sigma$ and define $T''$ similarly. Let $t_0$ be the number of jeu de taquin slides performed on $S'$ to produce $S''$, so that $t+t_0$ is the number of slides performed on $T'$ to produce $T''$. We have two cases to consider.\\

If $T'_{2,t'}>T'_{1,t'+t+t_0}$ for all $n_j<t'\leq \sigma_2$, then no entry of $T'$ moves down in this step, and because $T'_{2,t'}=S'_{2,t'}$ and $T'_{1,t'+t+t_0}=S'_{1,t'+t_0}$, no entry of $S'$ moves down either. Now because $S''_{2,n_j}=T''_{2,n_j}=j$, when we perform the final $n_j$ jeu de taquin slides, the entries of the first rows of $S''$ and $T''$ do not move, and because $S''_{i',j'}=T''_{i',j'}$ for all $i'\geq 2$, we now have 

\begin{equation}(S\vert_{j,k})_{i',j'}=\begin{cases} j & \text{ if }i'=1\text{ and }j'\leq t,\\ (T\vert_{j,k})_{1,j'-t} & \text{ if }i'=1\text{ and }j'>t, \\ (T\vert_{j,k})_{i',j'} & \text{ otherwise.}\end{cases}\end{equation}

In particular, we have $f(S\vert_{j,k})=f(T\vert_{j,k})+t$ and $\text{cocharge}_{j,k}(S)=\text{cocharge}_{j,k}(T)$. As an illustration, for $j=3$ and $k=5$, these stages in the rectification of the tableaux $S_1$ and $T_1$ are given below.

\begin{align*} S_1&=\tableau{3&4&5&5&5\\2&3&3&4&4&5\\1&1&1&2&3&3&3&3&\textcolor{red}3&\textcolor{red}3&5} \ &T_1&=\tableau{3&4&5&5&5\\2&3&3&4&4&5\\\textcolor{red}1&\textcolor{red}1&1&1&1&2&3&3&3&3&5} \\ \\ S_1'&=\tableau{4&5&5&5\\3&3&3&4&4&5\\&&&&3&3&3&3&\textcolor{red}3&\textcolor{red}3&5} &T_1'&=\tableau{4&5&5&5\\3&3&3&4&4&5\\&&&&&&3&3&3&3&5}\\ S_1''&=\tableau{4&5&5&5\\3&3&3&4&4&5\\&&&3&3&3&3&\textcolor{red}3&\textcolor{red}3&5} &T_1''&=\tableau{4&5&5&5\\3&3&3&4&4&5\\&&&3&3&3&3&5} \\ S_1\vert_{3,5}&=\tableau{5&5\\4&4&4&5&5\\3&3&3&3&3&3&3&\textcolor{red}3&\textcolor{red}3&5} &T_1\vert_{3,5}&=\tableau{5&5\\4&4&4&5&5\\3&3&3&3&3&3&3&5} \end{align*}

Now suppose that $T'_{2,t'}\leq T'_{1,t'+t+t_0}$ for some $t'$ with $n_j<t'\leq\sigma_2$. Then when we perform jeu de taquin slides to produce $T''$, an entry $x>j$ of the second row must move down on some slide. On subsequent slides, because the second row of $T'$ was weakly increasing, entries of the second row will continue to move down, and we will have $j<T''_{2,t'}\leq T''_{1,t'+1}$ for some $t'>n_j$. Meanwhile, we must have $j<S''_{2,t'}\leq S''_{2,t'+t}$ for some $t'>n_j$ because if we delete $t$ $j$'s on the first row of $S''$ and then perform $t$ jeu de taquin slides, we would obtain $T''$ so some entry of the second row must move down.\\

When we perform the final $n_j$ jeu de taquin slides, the entries of the first rows of $S''$ and $T''$ do not move, the $n_j$ $j$'s in the second rows of $S''$ and $T''$ each move one cell down, and the remaining entries in the second rows move at most $n_j$ cells to the left. Therefore, we now have $f(T\vert_{j,k})\leq n_j$ and $f(S\vert_{j,k})\leq n_j+t$. By Proposition \ref{prop:caterpillar}, Part 3 and Part 4, we have $n_j\leq (M-1)+\alpha_2+\cdots+\alpha_{j-1}\leq \alpha_j-M_{j,k}$, and therefore
$\text{cocharge}_{j,k}(S),\text{cocharge}_{j,k}(T)\geq M_{j,k}$.\\

As an illustration, for $j=3$ and $k=5$, these stages in the rectification of $S_2$ and $T_2$ are given below. When we rectify $T_2'$, a $5$ moves down from the second row so the second rows of $S_2''$ and $T_2''$ will be different. However, when we rectify $S_2''$ and $T_2''$, these $5$'s move at most three cells to the left, so $f(S_2\vert_{3,5})\leq 5$ and $f(T_2\vert_{3,5})\leq 3$, which means that $\text{cocharge}_{3,5}(S),\text{cocharge}_{3,5}(T)\geq 4$. Informally, the only way that $\text{cocharge}_{j,k}(S)$ could differ from $\text{cocharge}_{j,k}(T)$ is if a cell in the second row of $T$ moves down prematurely, but if this happens, then $f(T\vert_{j,k})$ will be small enough to make $\text{cocharge}_{j,k}(T)\geq M_{j,k}$.

\begin{align*} S_2&=\tableau{3&5&5\\2&3&3&4&4&5&5&5\\1&1&1&2&3&3&3&3&\textcolor{red}3&\textcolor{red}3&5} \ &T_2&=\tableau{3&5&5\\2&3&3&4&4&5&5&5\\\textcolor{red}1&\textcolor{red}1&1&1&1&2&3&3&3&3&5} \\ \\ S_2'&=\tableau{5&5\\3&3&3&4&4&5&5&5\\&&&&3&3&3&3&3&3&5} &T_2'&=\tableau{5&5\\3&3&3&4&4&5&5&5\\&&&&&&3&3&3&3&5}\\  S_2''&=\tableau{5&5\\3&3&3&4&4&5&5&\textcolor{red}5\\&&&3&3&3&3&3&3&\textcolor{red}5} &T_2''&=\tableau{5&5\\3&3&3&4&4&5&\textcolor{red}5\\&&&3&3&3&3&\textcolor{red}5&5} \\ S_2\vert_{3,5}&=\tableau{5&5\\4&4&5&5&\textcolor{red}5\\3&3&3&3&3&3&3&3&3&\textcolor{red}5} &T_2\vert_{3,5}&=\tableau{5&5\\4&4&5&\textcolor{red}5\\3&3&3&3&3&3&3&\textcolor{red}5&5} \end{align*}

In summary, the map $\varphi$ is a bijection and it satisfies $\text{cocharge}_{\Pi''}(S)=\text{cocharge}_{\Pi'}(\varphi(S))$. Also note that by definition we have
\begin{equation}\text{cocharge}_\Pi(T)=\begin{cases}\text{cocharge}_{\Pi'}(T)&\text{ if }\text{cocharge}_{1,j}(T)\leq M-1,\\\text{cocharge}_{\Pi'}(T)+1&\text{ if }\text{cocharge}_{1,j}(T)\geq M.\end{cases}\end{equation}
Therefore, using our induction hypothesis, we have \begin{align}G_{\bm\lambda}(\bm x;q)&=q G_{\bm\lambda'}(\bm x;q)+(1-q) G_{\bm\lambda''}(\bm x;q)\\\nonumber&=q\sum_{T\in\text{SSYT}(\alpha)}q^{\text{cocharge}_{\Pi'}(T)}s_{\text{shape}(T)}+(1-q)\sum_{S\in\text{SSYT}(\beta)}q^{\text{cocharge}_{\Pi''}(S)}s_{\text{shape}(S)}\\\nonumber&=q\sum_{\substack{T\in\text{SSYT}(\alpha)\\\text{cocharge}_{1,j}(T)\geq M}}q^{\text{cocharge}_{\Pi'}(T)}s_{\text{shape}(T)}\\\nonumber&+q\sum_{\substack{T\in\text{SSYT}(\alpha)\\\text{cocharge}_{1,j}(T)\leq M-1}}q^{\text{cocharge}_{\Pi'}(T)}s_{\text{shape}(T)}+(1-q)\sum_{S\in\text{SSYT}(\beta)}q^{\text{cocharge}_{\Pi''}(S)}s_{\text{shape}(S)}\\\nonumber&=q\sum_{\substack{T\in\text{SSYT}(\alpha)\\\text{cocharge}_{1,j}(T)\geq M}}q^{\text{cocharge}_{\Pi'}(T)+1}s_{\text{shape}(T)}+\sum_{\substack{T\in\text{SSYT}(\alpha)\\\text{cocharge}_{1,j}(T)\leq M-1}}q^{\text{cocharge}_{\Pi'}(T)}s_{\text{shape}(T)}\\\nonumber&=\sum_{T\in\text{SSYT}(\alpha)}q^{\text{cocharge}_\Pi(T)}s_{\text{shape}(T)}.
\end{align}This completes the proof.\end{proof}

Theorem \ref{thm:caterpillar} expresses the LLT polynomial $G_{\bm\lambda}(\bm x;q)$ in terms of the weighted graph $\Pi(\bm\lambda)$ but not in terms of $\bm\lambda$ itself. In other words, if $\Pi(\bm\lambda)$ and $\Pi(\bm\mu)$ are equal and triangle-free, then $G_{\bm\lambda}(\bm x;q)=G_{\bm\mu}(\bm x;q)$. We conjecture that the formula \eqref{eq:caterpillar} does not depend on the labelling of the vertices of $\Pi(\bm\lambda)$, provided that whenever $i<j<k$ and $v_i$ is adjacent to $v_k$, then $M_{j,k}=|v_j|$. By Proposition \ref{prop:caterpillar}, Part 5, this is equivalent to the following statement.

\begin{conjecture} Let $\bm\lambda$ and $\bm\mu$ be horizontal-strips. If the weighted graphs $\Pi(\bm\lambda)$ and $\Pi(\bm\mu)$ are isomorphic and triangle-free, then the LLT polynomials $G_{\bm\lambda}(\bm x;q)$ and $G_{\bm\mu}(\bm x;q)$ are equal. \end{conjecture}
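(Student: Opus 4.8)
The plan is a double induction, on the number of rows $n$ and on the total edge weight $M(\bm\lambda)$ of $\Pi$, at each stage peeling off the vertex $v_1$ of smallest label, whose row I call $R_{i'}$. When $n=1$ both sides of \eqref{eq:caterpillar} equal $s_{\alpha_1}$. When $v_1$ has no neighbour in $\Pi$, its row attacks no other, so Proposition~\ref{prop:lltfacts}(2) factors $G_{\bm\lambda}$ as $s_{\alpha_1}$ times the LLT polynomial of $\bm\lambda$ with $R_{i'}$ removed; I would apply the inductive hypothesis to the latter and then the Littlewood--Richardson rule, using that a weight-$\alpha$ tableau $T$ of the correct shape is exactly a weight-$(0,\alpha_2,\dots,\alpha_n)$ tableau $S$ with a bottom row of $\alpha_1$ ones adjoined, and that no summand of $\text{cocharge}_\Pi$ mentions the index $1$, so $\text{cocharge}_\Pi(T)=\text{cocharge}_\Pi(S)$.

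So suppose $v_1$ has a neighbour $v_j$, necessarily unique by Proposition~\ref{prop:caterpillar}(2), and proceed by induction on $M(\bm\lambda)\ge 1$. Since $v_1$ attacks only $R_{j'}$, one may slide $R_{i'}$ freely past all other rows (Lemma~\ref{lem:commuting}) and, interleaving the content shift of Proposition~\ref{prop:lltfacts}(3), keep lowering $\ell(R_{i'})$ until $R_{i'}$ sits directly above $R_{j'}$ with $R_{i'}\nleftrightarrow R_{j'}$; this halts since once $\ell(R_{i'})<\ell(R_{j'})$ while the rows still attack, Proposition~\ref{prop:mrirj} forces non-commuting. Now Lemma~\ref{lem:inductiverelation} gives $G_{\bm\lambda}=qG_{\bm\lambda'}+(1-q)G_{\bm\lambda''}$, where $\bm\lambda'$ swaps $R_{i'}$ and $R_{j'}$ and $\bm\lambda''$ replaces them by $R_{i'}\cup R_{j'}$ and $R_{i'}\cap R_{j'}$. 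By Proposition~\ref{prop:munin}, using that triangle-freeness makes all the relevant cross terms vanish, $\Pi'$ is $\Pi$ with the edge at $(v_1,v_j)$ lowered by one, while $\Pi''$ has $|v_1|=M_{1,j}-1$, $|v_j|=\alpha_1+\alpha_j-(M_{1,j}-1)$, that same edge $M_{1,j}-1$, and all other data unchanged; both $\Pi'$ and $\Pi''$ have total edge weight $M(\bm\lambda)-1$, so the inductive hypothesis applies to $G_{\bm\lambda'}$ and $G_{\bm\lambda''}$.

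The heart of the matter is to construct a shape-preserving bijection $\varphi\colon\text{SSYT}(\beta)\to\{T\in\text{SSYT}(\alpha):\text{cocharge}_{1,j}(T)\le M_{1,j}-1\}$, $\beta$ the weight vector for $\Pi''$, with $\text{cocharge}_{\Pi''}(S)=\text{cocharge}_{\Pi'}(\varphi(S))$; $\varphi$ simply recolours $t=\alpha_1-(M_{1,j}-1)$ of the $j$'s on the bottom row of $S$ into ones. I would check: that $S$ really has at least $t$ such bottom-row $j$'s, because the $j$'s not on the bottom row number at most the entries below $j$, which by Proposition~\ref{prop:caterpillar}(3),(4) is at most $\alpha_j-M_{j,k}$ (with $v_k$ the unique higher-indexed neighbour of $v_j$, if any); that $\varphi$ is a bijection onto the stated set with an explicit inverse, the membership condition being precisely $f(T)\ge t$; and that $\text{cocharge}_{1,j}$, as well as each $\text{cocharge}_{i,j}$ for $2\le i\le j-1$, is unchanged, the relevant restrictions of $S$ and $T$ differing only by $j$'s appended to the end of the first row. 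The subtle step --- and the main obstacle I anticipate --- is the summand $\text{cocharge}_{j,k}$: this can actually differ between $S$ and $T$, but I claim that whenever it does, both values are $\ge M_{j,k}$, so $\min\{M_{j,k},\,\cdot\}$ still agrees. To see this I would rectify $S\vert_{j,k}$ and $T\vert_{j,k}$ using Theorem~\ref{thm:jdt}, first doing all slides that avoid the first row --- which park the $n_j$ off-bottom $j$'s on the second row of each tableau --- and then the rest: either no entry exceeding $j$ ever descends from the second row prematurely, in which case $f(S\vert_{j,k})=f(T\vert_{j,k})+t$ and the two agree, or one does, in which case those $n_j$ second-row $j$'s pin $f(S\vert_{j,k})\le n_j+t$ and $f(T\vert_{j,k})\le n_j$, whence by Proposition~\ref{prop:caterpillar}(3),(4) both $\text{cocharge}_{j,k}(S)$ and $\text{cocharge}_{j,k}(T)$ are at least $\alpha_j-n_j\ge M_{j,k}$.

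To finish, one assembles the pieces: by definition $\text{cocharge}_\Pi(T)$ equals $\text{cocharge}_{\Pi'}(T)$ when $\text{cocharge}_{1,j}(T)\le M_{1,j}-1$ and $\text{cocharge}_{\Pi'}(T)+1$ otherwise, so in $qG_{\bm\lambda'}+(1-q)G_{\bm\lambda''}$ the $(1-q)$-term, pushed forward along $\varphi$, cancels exactly the extra factor of $q$ that the $q$-shift from $\bm\lambda'$ put on the tableaux with $\text{cocharge}_{1,j}(T)\le M_{1,j}-1$, leaving $\sum_{T\in\text{SSYT}(\alpha)}q^{\text{cocharge}_\Pi(T)}s_{\text{shape}(T)}$ and hence \eqref{eq:caterpillar}.
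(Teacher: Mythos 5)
There is a genuine gap: what you have written out is, essentially verbatim, the paper's proof of Theorem \ref{thm:caterpillar}, and your final sentence indeed concludes \eqref{eq:caterpillar}. That theorem expresses $G_{\bm\lambda}(\bm x;q)$ in terms of the \emph{labelled} weighted graph $\Pi(\bm\lambda)$, and so it only yields $G_{\bm\lambda}(\bm x;q)=G_{\bm\mu}(\bm x;q)$ when $\Pi(\bm\lambda)$ and $\Pi(\bm\mu)$ are \emph{equal} as labelled graphs. The statement you were asked to prove is the conjecture, which assumes only that $\Pi(\bm\lambda)$ and $\Pi(\bm\mu)$ are \emph{isomorphic}, and the paper explicitly records that the missing step --- that the right-hand side of \eqref{eq:caterpillar} does not depend on the admissible labelling of the vertices --- is open; it offers no proof of it. Your argument never addresses this: applying your result to both $\bm\lambda$ and $\bm\mu$ produces two sums, one over $\text{SSYT}(\alpha)$ with the statistic $\text{cocharge}_{\Pi(\bm\lambda)}$ and one over $\text{SSYT}(\alpha^\sigma)$ (the weight permuted by the isomorphism) with $\text{cocharge}_{\Pi(\bm\mu)}$, and both the weight vector and each summand $\min\{M_{i,j},\text{cocharge}_{i,j}(T)\}$ genuinely depend on the labelling, since $\text{cocharge}_{i,j}$ is defined by restricting $T$ to the entries between $i$ and $j$. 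Nothing in your induction shows these two sums coincide.

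Concretely, Example \ref{ex:caterpillar} already exhibits the issue: swapping the labels of $v_5$ and $v_6$ gives a second admissible labelling of the same underlying weighted graph, realized by a different horizontal-strip $\bm\mu$, and the conjecture asserts precisely that $G_{\bm\lambda}(\bm x;q)=G_{\bm\mu}(\bm x;q)$ in such situations; your proof gives two formulas of the shape \eqref{eq:caterpillar}, one for each labelling, but no comparison between them. (The paper verifies labelling-independence by hand only in the two-row case.) So either you need a direct bijective or algebraic argument that $\sum_{T\in\text{SSYT}(\alpha)}q^{\text{cocharge}_{\Pi}(T)}s_{\text{shape}(T)}$ is invariant under relabellings satisfying the condition of Proposition \ref{prop:caterpillar}, Part 1, or some other route entirely; as it stands your proposal proves the theorem the paper already proves, not the conjecture.
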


We further conjecture that in general a horizontal-strip LLT polynomial $G_{\bm\lambda}(\bm x;q)$ is determined by its unlabelled weighted graph.

\begin{conjecture} \label{con:dependsongraph} Let $\bm\lambda$ and $\bm\mu$ be horizontal-strips. If the weighted graphs $\Pi(\bm\lambda)$ and $\Pi(\bm\mu)$ are isomorphic, then the LLT polynomials $G_{\bm\lambda}(\bm x;q)$ and $G_{\bm\mu}(\bm x;q)$ are equal. \end{conjecture}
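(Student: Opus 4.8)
The plan is to prove the stronger assertion that $G_{\bm\lambda}(\bm x;q)$ is computed by a recursion whose input and every intermediate step involve \emph{only} the unlabelled weighted graph $\Pi(\bm\lambda)$, and then to run that recursion in parallel on $\bm\lambda$ and $\bm\mu$. The induction would be on the total edge weight $M(\bm\lambda)$, which by Theorem~\ref{thm:maxinv} is an isomorphism invariant of $\Pi(\bm\lambda)$. First I would dispose of the terminal case in which every edge of $\Pi(\bm\lambda)$ is a \emph{commuting} edge; by Proposition~\ref{prop:mrirj} such an edge has weight $\min\{|v_i|,|v_j|\}$ and records a containment of one row in the other. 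I claim that every connected component in which all edges commute is a totally nested chain of rows: if a row is contained in two others, those two share a common nonempty row, hence attack, so the induced edge exists and must also commute, forcing the three rows into a chain, and iterating shows each component is Russian-doll nested. Using Lemma~\ref{lem:commuting} to sort each component into the order of Theorem~\ref{thm:hl} and Proposition~\ref{prop:lltfacts}, Part~2 to factor $G_{\bm\lambda}$ over components, this case yields a product of transformed modified Hall--Littlewood polynomials $\prod_c\tilde H_{\lambda^{(c)}}(\bm x;q)$, which depends only on the multiset of vertex weights in each component, hence only on the unlabelled graph.

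For the inductive step I would select a \emph{non-commuting} edge $e=(v_i,v_j)$ of $\Pi(\bm\lambda)$ and, exactly as in the proof of Theorem~\ref{thm:caterpillar}, use Lemma~\ref{lem:commuting}, the map $\kappa$, and Proposition~\ref{prop:lltfacts}, Part~2 to maneuver its two rows into adjacent tuple positions with the larger-$\ell$ row first, so that Lemma~\ref{lem:inductiverelation} applies and gives $G_{\bm\lambda}(\bm x;q)=qG_{\bm\lambda'}(\bm x;q)+(1-q)G_{\bm\lambda''}(\bm x;q)$. Here $\bm\lambda'$ swaps the two rows, lowering the weight of $e$ by one, and $\bm\lambda''$ replaces them by their union and intersection, of weights $M-1$ and $|v_i|+|v_j|-(M-1)$. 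Both satisfy $M(\bm\lambda'),M(\bm\lambda'')<M(\bm\lambda)$, and the crucial point is that the isomorphism types of $\Pi(\bm\lambda')$ and $\Pi(\bm\lambda'')$ are determined by $\Pi(\bm\lambda)$ together with the chosen edge $e$: Proposition~\ref{prop:munin} expresses every new edge weight purely in terms of the old weights $M,M_1,M_2$ and the third vertex weight. I would package these formulas into a purely graph-theoretic reduction $\Pi\mapsto(\Pi',\Pi'')$.

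With the reduction made graph-theoretic, the conjecture follows \emph{without any confluence argument}. Given $\bm\mu$ with a weighted-graph isomorphism $\phi\colon\Pi(\bm\lambda)\to\Pi(\bm\mu)$, I perform the reduction on $\bm\mu$ at the corresponding edge $\phi(e)$, obtaining $G_{\bm\mu}(\bm x;q)=qG_{\bm\mu'}(\bm x;q)+(1-q)G_{\bm\mu''}(\bm x;q)$ with $\Pi(\bm\mu')\cong\Pi(\bm\lambda')$ and $\Pi(\bm\mu'')\cong\Pi(\bm\lambda'')$, since the reduction depends only on the isomorphic graph data at $e$ and $\phi(e)$. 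The induction hypothesis, applied at the smaller values $M(\bm\lambda')$ and $M(\bm\lambda'')$, gives $G_{\bm\lambda'}=G_{\bm\mu'}$ and $G_{\bm\lambda''}=G_{\bm\mu''}$, whence $G_{\bm\lambda}=G_{\bm\mu}$. It is precisely because I reduce the two strips at matched edges that I avoid having to show the recursion is independent of the order of reductions.

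I expect the main obstacle to be the maneuvering in the inductive step. In the triangle-free proof the chosen vertex $v_1$ had a \emph{unique} neighbour, which by Proposition~\ref{prop:mrirj} forced every other row to commute with it and so permitted sliding its partner into an adjacent position by commuting swaps alone. When $\Pi(\bm\lambda)$ contains triangles, the endpoints of $e$ may each have several neighbours, and the rows lying between them in the tuple need not commute with either endpoint, so commuting swaps alone may fail to make the pair adjacent. Resolving this would require either a combinatorial lemma guaranteeing that some non-commuting edge, perhaps the one whose right endpoint has minimal content, can always be freed, or a strengthening of Lemma~\ref{lem:inductiverelation} that reduces a non-commuting edge in the presence of intervening non-commuting rows. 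A secondary technical point is to verify that applying Proposition~\ref{prop:munin} simultaneously to all third rows, while tracking the content-reading relabelling caused by merging two vertices, indeed reconstructs the full isomorphism type of $\Pi(\bm\lambda'')$; this is bookkeeping once the single-row formulas are in hand, but it must be checked in exactly the presence of triangles that the general conjecture allows.
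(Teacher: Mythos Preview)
This statement is Conjecture~\ref{con:dependsongraph}; the paper offers no proof and leaves it open. So there is no ``paper's proof'' to compare against, and your proposal should be read as an attempted resolution of an open problem.

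Your terminal case contains an error. You argue that if every attacking pair commutes then each connected component is a nested chain, because ``if a row is contained in two others, those two share a common nonempty row, hence attack.'' That direction is fine, but you omit the dual situation: a single long row can \emph{contain} two short rows that are far enough apart not to attack each other. For instance $R_2=11/0$, $R_1=3/0$, $R_3=8/5$ gives $R_1,R_3\subseteq R_2$, all three pairs commute, yet $R_1$ and $R_3$ are incomparable and do not attack. The component is a path $R_1\text{--}R_2\text{--}R_3$, not a chain under containment, and Theorem~\ref{thm:hl} does not apply. So even the ``all edges commute'' case already requires something beyond Hall--Littlewood; in fact it is exactly an instance of the conjecture you are trying to prove.

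Your inductive step has a further issue beyond the maneuvering obstacle you flag. You propose to pick a non-commuting edge $e$ of $\Pi(\bm\lambda)$ and reduce both $\bm\lambda$ and $\bm\mu$ at $e$ and $\phi(e)$. But ``the rows at this edge do not commute'' is \emph{not} a graph invariant: by Proposition~\ref{prop:mrirj}, a non-commuting pair placed with the larger-$\ell$ row first can have edge weight equal to $\min\{|v_i|,|v_j|\}$, indistinguishable from a commuting pair. So the edge $\phi(e)$ in $\bm\mu$ might correspond to commuting rows, in which case Lemma~\ref{lem:inductiverelation} is unavailable there. You can partly salvage this by only selecting edges with $M_{i,j}<\min\{|v_i|,|v_j|\}$, which are guaranteed non-commuting on both sides, but then the terminal case becomes ``every edge has weight $\min\{|v_i|,|v_j|\}$,'' which is strictly larger than ``every pair commutes'' and again not covered by your chain argument.

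The maneuvering gap you identify is genuine and is precisely why the statement remains a conjecture: once triangles are present there is no known mechanism, via Lemma~\ref{lem:commuting} and $\kappa$ alone, to bring an arbitrary non-commuting pair into adjacent positions, and no known analogue of Lemma~\ref{lem:inductiverelation} that tolerates intervening non-commuting rows.
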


\end{document}